\documentclass[10pt]{amsart}

\usepackage{graphicx}
\usepackage{amscd}
\usepackage{amsmath,amsfonts,amssymb,amsthm,mathrsfs,cite}
\usepackage[all]{xy}
\xyoption{curve}
\usepackage{fancybox}
\usepackage{color}
\usepackage{url}
\usepackage{geometry}
\geometry{left=2.5cm,right=2.5cm,top=3cm,bottom=3cm}


\newtheorem{thm}{Theorem}[section]

\newtheorem{cor}[thm]{Corollary}

\newtheorem{lem}[thm]{Lemma}
\newtheorem{exm}[thm]{Example}

\newtheorem{prop}[thm]{Proposition}
\newtheorem{rmk}[thm]{Remark}
\newtheorem{defn}[thm]{Definition}
\numberwithin{equation}{section}

\newcommand{\ik}{\operatorname {IKer}}

\newcommand{\D}{\operatorname{D}}
\newcommand{\modcat}[1]{#1\mbox{\rm -mod}}
\newcommand{\End}{\operatorname{End}}
\newcommand{\add}{\operatorname{add}}
\newcommand{\Ima}{\operatorname{Im}}
\newcommand{\Cok}{\operatorname{Coker}}
\newcommand{\Ker}{\operatorname{Ker}}
\newcommand{\Hom}{\operatorname{Hom}}
\newcommand{\Ext}{\operatorname{Ext}}
\newcommand{\injd}{\operatorname{inj.dim}}
\newcommand{\lra}{\longrightarrow }

\newcommand{\m}{\Lambda}
\title [Bimodule monomorphism categories and RSS equivalences]
{Bimodule monomorphism categories \\ and RSS equivalences via cotilting modules}
\author{Bao-Lin Xiong\ \ \ \ \ Pu Zhang$^*$\ \ \ \ \ Yue-Hui Zhang}
\thanks {$^*$The corresponding author.}

\begin{document}

\renewcommand{\thefootnote}{\alph{footnote}}
\setcounter{footnote}{-1} \footnote{Supported by the NSF of China
(11431010, 11301019, 11271251).}
\setcounter{footnote}{-1}
 \footnote{\it 2010 Mathematics Subject Classification. Primary 16G70; Secondary 16D40; 16E30}


\begin{abstract} The
monomorphism category $\mathscr{S}(A, M, B)$ induced by a bimodule $_AM_B$ is the subcategory of $\Lambda$-mod consisting of $\left[\begin{smallmatrix}
X\\ Y\end{smallmatrix}\right]_{\phi}$ such that $\phi: M\otimes_B Y\rightarrow X$ is a monic $A$-map, where $\Lambda=\left[\begin{smallmatrix}
A&M\\0&B
\end{smallmatrix}\right]$. In general, it is not the monomorphism categories induced by quivers. It could describe the Gorenstein-projective $\m$-modules. This monomorphism category is a resolving subcategory of $\modcat{\Lambda}$ if and only if $M_B$ is projective. In this case, it has enough injective objects and Auslander-Reiten sequences, and can be also described as the left perpendicular category of a unique basic cotilting $\Lambda$-module. If $M$ satisfies the condition ${\rm (IP)}$, then  the stable category of $\mathscr{S}(A, M, B)$ admits a recollement of additive categories, which is in fact a recollement of singularity categories if $\mathscr{S}(A, M, B)$ is a {\rm Frobenius} category. Ringel-Schmidmeier-Simson equivalence between $\mathscr{S}(A, M, B)$ and its dual is introduced. If $M$ is an exchangeable  bimodule,
then an {\rm RSS} equivalence is given by a $\Lambda$-$\Lambda$ bimodule which is a two-sided cotilting $\Lambda$-module with a special property; and the Nakayama functor $\mathcal N_\m$ gives an {\rm RSS} equivalence if and only if both $A$ and $B$ are Frobenius algebras.

\vskip5pt

{\it Keywords$:$} \  monomorphism category induced by bimodule, Auslander-Reiten sequence, cotilting module, recollement of additive categories, exchangeable  bimodule,   {\rm RSS} equivalence, Frobenius algebra, Nakayama functor
\end{abstract}
\maketitle
\vspace{-20pt}
\section{\bf Introduction and preliminaries}

\subsection{} Throughout, algebras mean Artin algebras, modules are finitely generated, and a subcategory is a full subcategory closed under isomorphisms.
For an algebra $A,$ let $A$-mod (resp. ${\rm mod}A$) be the category of left (resp. right) $A$-modules. So there is a duality $\D:
A\mbox{-}{\rm mod} \rightarrow {\rm mod}A$.
\vskip5pt

This paper is to draw attention to the monomorphism category $\mathscr{S}(A, M, B)$
induced by an $A$-$B$-bimodule $M$. It is defined to be the subcategory of $\Lambda$-mod consisting of left $\Lambda$-modules $\left[\begin{smallmatrix}
X\\ Y\end{smallmatrix}\right]_{\phi}$ such that $\phi: M\otimes_B Y\rightarrow X$ is a monic $A$-map, where $\Lambda$ is the triangular matrix algebra $\left[\begin{smallmatrix} A&M\\0&B \end{smallmatrix}\right]$.
When $_AM_B = \ _AA_A$, it is the classical submodule category $\mathscr{S}(A)$ in [RS1-RS3]. This submodule category is initiated in [Bir]. C. Ringel and M. Schmidmeier [RS2] establish its Auslander-Reiten
theory; and D. Simson  ([S1]-[S3]) studies its representation type. By D. Kussin, H. Lenzing and H. Meltzer ([KLM1, KLM2]; see also [C]), it is related to the singularity theory. It has been generalized via quivers to the filtered chain category  and the separated monomorphism category ([S1-S3], [Z1], [LZ], [ZX]).  However, all these generalizations can not include monomorphism categories
induced by bimodules (this will be clarified in Example \ref{nonexchangeable}). Another motivation is that $\mathscr{S}(A, M, B)$ can describe the Gorenstein-projective $\Lambda$-modules ([Z2, Thms. 1.4., 2.2]).

\subsection{} To study $\mathscr{S}(A, M, B)$, first, we need it to be a resolving subcategory of $\Lambda$-mod.
So we work under the condition that $M_B$ is projective: this is a necessary and sufficient condition such that $\mathscr{S}(A, M, B)$ is a resolving subcategory.
Then $\mathscr{S}(A, M, B)$ has enough projective objects and enough injective objects, and it is a Frobenius category if and only if $A$ and $B$ are selfinjective and $_AM$ and $M_B$ are projective (Corollary \ref{corollary-frobenius-cat}). This monomorphism category $\mathscr{S}(A, M, B)$ enjoys the functorially finiteness and {\rm Auslander-Reiten} sequences, and it closely relates to the tilting theory.
Here we use the classical cotilting modules of injective dimension at most $1$ ([HR], [R, p.167], [AR], [ASS, p.242]).
For a left $\Lambda$-module $Z$, let $^\perp Z$ denote the subcategory $\{L\in \Lambda\mbox{-}{\rm mod} \ | \ {\rm Ext}^m_\Lambda(L, Z) = 0, \ \forall \ m\ge 1\}.$

\begin{thm} \label{cotilting}
Let $M$ be an $A$-$B$-bimodule. Then

\vskip5pt

$(1)$ \ The following are equivalent$:$

\hskip20pt ${\rm (i)}$ \  $M_B$ is projective$;$

\hskip20pt ${\rm (ii)}$ \  $\mathscr{S}(A, M, B)$ is a resolving subcategory of $\modcat{\Lambda};$

\hskip20pt ${\rm (iii)}$ \ $_\Lambda T:=\left[\begin{smallmatrix}\D(A_A)\\ 0 \end{smallmatrix}\right]\oplus\left[\begin{smallmatrix}E_{\D(B_B)}\\ \D(B_B) \end{smallmatrix}\right]_{e}$ is a unique cotilting left $\Lambda$-module, up to multiplicities of indecomposable direct summands, such that $\mathscr{S}(A, M, B)={}^\perp T$, where  $E_{\D(B_B)}$ is an injective envelope of the left $A$-module $_AM\otimes_B{\D(B_B)}$ with inclusion $e: M\otimes_B{\D(B_B)}\hookrightarrow E_{\D(B_B)}$.

\vskip5pt

$(2)$ \ $\mathscr{S}(A, M, B)$ is a contravariantly finite subcategory of $\Lambda$-{\rm mod}. Moreover, if $M_B$ is projective, then $\mathscr{S}(A, M, B)$ is a functorially finite subcategory of $\Lambda$-{\rm mod}, and has {\rm Auslander-Reiten} sequences.
\end{thm}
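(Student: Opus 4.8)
The plan is to prove ${\rm (i)}\Leftrightarrow{\rm (ii)}$ by a $\operatorname{Tor}$-argument, then, under the hypothesis that $M_B$ is projective, to check that the module $T$ of ${\rm (iii)}$ is a basic cotilting module with $\mathscr S(A,M,B)\subseteq{}^{\perp}T$, and finally to feed this into the Auslander--Reiten correspondence between basic cotilting modules and contravariantly finite resolving subcategories to obtain ${\rm (iii)}$ and $(2)$; contravariant finiteness itself needs no hypothesis on $M$. First, $\mathscr S(A,M,B)$ always contains the projective $\Lambda$-modules $\left[\begin{smallmatrix}P\\0\end{smallmatrix}\right]$ and $\left[\begin{smallmatrix}M\otimes_{B}Q\\Q\end{smallmatrix}\right]_{1}$ and is closed under direct summands, since membership only constrains the structure map to be monic. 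When $M_B$ is projective, $M\otimes_{B}-$ is exact, and the snake lemma applied to the commutative diagram attached to a short exact sequence of $\Lambda$-modules (rows $0\to M\otimes_{B}Y'\to M\otimes_{B}Y\to M\otimes_{B}Y''\to 0$ and $0\to X'\to X\to X''\to 0$, vertical maps the structure maps) shows $\mathscr S(A,M,B)$ is closed under extensions and under kernels of epimorphisms; hence ${\rm (i)}\Rightarrow{\rm (ii)}$. Conversely, for any left $B$-module $W$ choose $0\to W'\to Q\to W\to 0$ with $Q$ projective; the canonical epimorphism $\left[\begin{smallmatrix}M\otimes_{B}Q\\Q\end{smallmatrix}\right]_{1}\twoheadrightarrow\left[\begin{smallmatrix}M\otimes_{B}W\\W\end{smallmatrix}\right]_{1}$ between objects of $\mathscr S(A,M,B)$ has kernel $\left[\begin{smallmatrix}\Ker(M\otimes_{B}Q\to M\otimes_{B}W)\\W'\end{smallmatrix}\right]$, whose structure map is monic exactly when $\operatorname{Tor}^{B}_{1}(M,W)=0$; so ${\rm (ii)}$ forces $M_B$ flat, hence projective, which is ${\rm (ii)}\Rightarrow{\rm (i)}$. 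Moreover, for any $\Lambda$-module $\left[\begin{smallmatrix}X\\Y\end{smallmatrix}\right]_{\phi}$ the obvious map $\left[\begin{smallmatrix}X\oplus E_{A}(M\otimes_{B}Y)\\Y\end{smallmatrix}\right]\to\left[\begin{smallmatrix}X\\Y\end{smallmatrix}\right]_{\phi}$, where $E_{A}(M\otimes_{B}Y)$ is the injective envelope of the $A$-module $M\otimes_{B}Y$ and the structure map is $\binom{\phi}{\,\iota\,}$ with $\iota$ the inclusion, is a right $\mathscr S(A,M,B)$-approximation: any map into it from an object with monic structure map lifts by injectivity of $E_{A}(M\otimes_{B}Y)$. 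Thus $\mathscr S(A,M,B)$ is contravariantly finite, giving the first half of $(2)$.

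Now assume $M_B$ projective and write $E:=E_{\D(B_B)}$. For $N=\left[\begin{smallmatrix}X\\Y\end{smallmatrix}\right]_{\phi}\in\mathscr S(A,M,B)$, put $Z:=\Cok\phi$, so $0\to\left[\begin{smallmatrix}M\otimes_{B}Y\\Y\end{smallmatrix}\right]_{1}\to N\to\left[\begin{smallmatrix}Z\\0\end{smallmatrix}\right]\to 0$. Tensoring a projective resolution of $Y$ over $B$ by $M$ gives a $\Lambda$-projective resolution of the first term, applying $\left[\begin{smallmatrix}-\\0\end{smallmatrix}\right]$ to a projective resolution of $Z$ over $A$ resolves the third, and the horseshoe lemma yields a $\Lambda$-projective resolution $P_{\bullet}(N)$ of $N$ sitting in a degreewise split exact sequence of complexes with outer terms $\left[\begin{smallmatrix}M\otimes_{B}Q_{\bullet}\\Q_{\bullet}\end{smallmatrix}\right]_{1}$ and $\left[\begin{smallmatrix}P_{\bullet}\\0\end{smallmatrix}\right]$. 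Using the identifications $\Hom_{\Lambda}\bigl(\left[\begin{smallmatrix}M\otimes_{B}Q\\Q\end{smallmatrix}\right]_{1},\left[\begin{smallmatrix}\D(A_A)\\0\end{smallmatrix}\right]\bigr)=0$, $\Hom_{\Lambda}\bigl(\left[\begin{smallmatrix}P\\0\end{smallmatrix}\right],\left[\begin{smallmatrix}\D(A_A)\\0\end{smallmatrix}\right]\bigr)=\Hom_{A}(P,\D(A_A))$, $\Hom_{\Lambda}\bigl(\left[\begin{smallmatrix}P\\0\end{smallmatrix}\right],\left[\begin{smallmatrix}E\\\D(B_B)\end{smallmatrix}\right]_{e}\bigr)=\Hom_{A}(P,E)$ and $\Hom_{\Lambda}\bigl(\left[\begin{smallmatrix}M\otimes_{B}Q\\Q\end{smallmatrix}\right]_{1},\left[\begin{smallmatrix}E\\\D(B_B)\end{smallmatrix}\right]_{e}\bigr)\cong\Hom_{B}(Q,\D(B_B))$, one applies $\Hom_{\Lambda}(-,T)$ to the split sequence of complexes; since $\D(A_A)$ and $E$ are injective over $A$ and $\D(B_B)$ is injective over $B$, the resulting long exact sequences force $\Ext^{\,i}_{\Lambda}(N,T)=0$ for all $i\ge 1$. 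Hence $\mathscr S(A,M,B)\subseteq{}^{\perp}T$, and in particular $\Ext^{\,i}_{\Lambda}(T,T)=0$ for $i\ge 1$ since $T\in\mathscr S(A,M,B)$.

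Next one shows $\injd_{\Lambda}T\le 1$; here the exact shape of $T$ and the hypothesis on $M$ are used decisively. Projectivity of $M_B$ makes $\D M\cong\Hom_{A}(M,\D(A_A))$ injective over $B$, and as $E$ is a direct summand of a finite direct sum of copies of $\D(A_A)$ it follows that $\Hom_{A}(M,E)$ is injective over $B$. Then $\left[\begin{smallmatrix}\D(A_A)\\0\end{smallmatrix}\right]$ embeds in the injective $\Lambda$-module $\left[\begin{smallmatrix}\D(A_A)\\\Hom_{A}(M,\D(A_A))\end{smallmatrix}\right]$ with cokernel $\left[\begin{smallmatrix}0\\\D M\end{smallmatrix}\right]$, which is injective; and the injective envelope of $\left[\begin{smallmatrix}E\\\D(B_B)\end{smallmatrix}\right]_{e}$ is $\left[\begin{smallmatrix}E\\\Hom_{A}(M,E)\end{smallmatrix}\right]\oplus\left[\begin{smallmatrix}0\\J\end{smallmatrix}\right]$ for some injective $B$-module $J$ — its $A$-component stays $E$ precisely because $E$ is already $A$-injective, hence equal to the injective envelope of its $A$-socle — whence the cokernel is $\left[\begin{smallmatrix}0\\C\end{smallmatrix}\right]$ with $C$ the cokernel of a monomorphism out of the injective $B$-module $\D(B_B)$; that monomorphism splits, so $C$, and hence $\left[\begin{smallmatrix}0\\C\end{smallmatrix}\right]$, is injective. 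Thus $\injd_{\Lambda}T\le 1$. Since $T$ is basic with as many indecomposable summands as $\Lambda$ has simple modules — the $\left[\begin{smallmatrix}E_{A}(S)\\0\end{smallmatrix}\right]$ for $S$ simple over $A$ and the $\left[\begin{smallmatrix}E_{A}(M\otimes_{B}E_{B}(S'))\\E_{B}(S')\end{smallmatrix}\right]_{e}$ for $S'$ simple over $B$ are indecomposable and pairwise non-isomorphic — together with $\Ext^{1}_{\Lambda}(T,T)=0$, $T$ is a basic cotilting $\Lambda$-module. Finally every $\Lambda$-module $\left[\begin{smallmatrix}X\\Y\end{smallmatrix}\right]_{\phi}$ lies in an exact sequence $0\to\left[\begin{smallmatrix}M\otimes_{B}Y\\0\end{smallmatrix}\right]\to\left[\begin{smallmatrix}X\oplus M\otimes_{B}Y\\Y\end{smallmatrix}\right]\to\left[\begin{smallmatrix}X\\Y\end{smallmatrix}\right]_{\phi}\to 0$ (middle structure map $\binom{\phi}{\,1\,}$) with the first two terms in $\mathscr S(A,M,B)$, so $\widehat{\mathscr S(A,M,B)}=\modcat{\Lambda}$. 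Hence, by the Auslander--Reiten correspondence [AR], $\mathscr S(A,M,B)={}^{\perp}C$ for a unique basic cotilting $C$ with $\add C=\mathscr S(A,M,B)\cap\mathscr S(A,M,B)^{\perp}$; as $T$ lies in this intersection and has the right number of summands, $T\cong C$, which proves ${\rm (iii)}$, uniqueness included.

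It remains to prove covariant finiteness when $M_B$ is projective, from which functorial finiteness and the existence of Auslander--Reiten sequences follow. Given $N=\left[\begin{smallmatrix}X\\Y\end{smallmatrix}\right]_{\phi}$, let $Y_{0}\subseteq Y$ be the smallest submodule with $\Ker\phi\subseteq M\otimes_{B}Y_{0}$ inside $M\otimes_{B}Y$ (it exists because flatness of $M$ makes $M\otimes_{B}-$ commute with finite intersections of submodules of $Y$). Since $\Ker\phi\subseteq M\otimes_{B}Y_{0}$, $\phi$ induces a monic map $\overline{\phi}\colon M\otimes_{B}(Y/Y_{0})\to X/\phi(M\otimes_{B}Y_{0})$, so $L:=\left[\begin{smallmatrix}X/\phi(M\otimes_{B}Y_{0})\\Y/Y_{0}\end{smallmatrix}\right]_{\overline{\phi}}\in\mathscr S(A,M,B)$, and the natural surjection $p\colon N\to L$ is a left $\mathscr S(A,M,B)$-approximation: for any $(f,g)\colon N\to L'$ with $L'\in\mathscr S(A,M,B)$ one has $(M\otimes_{B}g)(\Ker\phi)=0$, hence $\Ker\phi\subseteq M\otimes_{B}(Y_{0}\cap\Ker g)$, hence $Y_{0}\subseteq\Ker g$ by minimality of $Y_{0}$, so $(f,g)$ factors (uniquely, as $p$ is an epimorphism) through $p$. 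Thus $\mathscr S(A,M,B)$ is functorially finite; being functorially finite and closed under extensions it has almost split sequences by the theorem of Auslander and Smal\o, and since it has enough injective objects these are Auslander--Reiten sequences in $\mathscr S(A,M,B)$. The hardest step is $\injd_{\Lambda}T\le 1$: it is exactly there that the definition of $T$ via an injective envelope of ${}_{A}M\otimes_{B}\D(B_B)$ and the projectivity of $M_B$ are indispensable — through $\D M$ and $\Hom_{A}(M,E)$ being $B$-injective and through monomorphisms out of $\D(B_B)$ splitting — and this is what singles out precisely this cotilting module in the Auslander--Reiten correspondence; a secondary technical point is checking that $p$ above really is a left approximation, which rests on the minimality of $Y_{0}$.
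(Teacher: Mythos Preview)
Your argument is correct, but it diverges from the paper's proof at several points, and the comparison is instructive.

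For the equality $\mathscr S(A,M,B)={}^{\perp}T$, the paper does not invoke the Auslander--Reiten correspondence. Instead it proves directly (Lemma~\ref{scat2}, via the auxiliary Lemma~\ref{lemma1}) that $\mathscr S(A,M,B)={}^{\perp}\left[\begin{smallmatrix}\D(A_A)\\0\end{smallmatrix}\right]$, which immediately gives ${}^{\perp}T\subseteq\mathscr S(A,M,B)$; the other inclusion follows from $T$ being an injective object of $\mathscr S(A,M,B)$ (Proposition~\ref{enoughprojinjobj}) together with $\injd_\Lambda T\le 1$. Your route---show $\mathscr S\subseteq{}^{\perp}T$ by a horseshoe computation, verify $\widehat{\mathscr S}=\Lambda\text{-mod}$, and then let the Auslander--Reiten machinery of [AR] produce the unique cotilting $C$ with $\mathscr S={}^{\perp}C$ and identify $T\cong C$ from $T\in\mathscr S\cap\mathscr S^{\perp}$---is equally valid and arguably more conceptual, but it hides the explicit description $\mathscr S(A,M,B)={}^{\perp}\left[\begin{smallmatrix}\D(A_A)\\0\end{smallmatrix}\right]$, which the paper uses elsewhere. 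For $\injd_\Lambda\left[\begin{smallmatrix}E\\\D(B_B)\end{smallmatrix}\right]_e\le 1$, the paper simply writes down an explicit embedding into $\left[\begin{smallmatrix}E\\\Hom_A(M,E)\end{smallmatrix}\right]_\varphi\oplus\left[\begin{smallmatrix}0\\\D(B_B)\end{smallmatrix}\right]$ with cokernel $\left[\begin{smallmatrix}0\\\Hom_A(M,E)\end{smallmatrix}\right]$; your argument via the injective envelope reaches the same conclusion but is a little more roundabout (your claim that the $A$-component of the envelope is still $E$ is correct, but needs the observation that a nonzero injective $A$-summand disjoint from $E$ would contradict essentiality).

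For part~(2), the paper does not construct a left approximation: it obtains covariant finiteness for free from Krause--Solberg [KS, Corol.~0.3] (resolving contravariantly finite implies functorially finite) and then cites Auslander--Smal{\o} [AS] for Auslander--Reiten sequences. Your explicit construction of a left approximation via the minimal submodule $Y_0$ with $\Ker\phi\subseteq M\otimes_BY_0$ is a genuine addition; the key point that flatness of $M_B$ makes $M\otimes_B-$ preserve finite intersections, so that such a $Y_0$ exists and satisfies $Y_0\subseteq\Ker g$ whenever $(f,g)$ maps into $\mathscr S(A,M,B)$, is correct and gives a concrete description the paper does not provide. Finally, you do not state ${\rm (iii)}\Rightarrow{\rm (ii)}$ separately; this is trivial (since ${}^{\perp}T$ is always resolving) and the paper records it in one line.
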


\begin{cor} \label{cotilting2} \ If $_AM_B$ satisfies the condition {\rm (IP)}, then $_\Lambda T=\left[\begin{smallmatrix}\D(A_A)\\ 0 \end{smallmatrix}\right]\oplus\left[\begin{smallmatrix}M\otimes_B \D(B)\\ \D(B_B) \end{smallmatrix}\right]_{\rm Id}$ is a unique cotilting left $\Lambda$-module, up to multiplicities of indecomposable direct summands, such that $\mathscr{S}(A, M, B)={}^\perp T$.
\end{cor}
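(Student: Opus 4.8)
The plan is to deduce the Corollary directly from Theorem~\ref{cotilting}(1). Condition (IP) includes the standing hypothesis that $M_B$ is projective, so part~(1) of Theorem~\ref{cotilting} already furnishes an essentially unique cotilting module $_\Lambda T' := \left[\begin{smallmatrix}\D(A_A)\\ 0\end{smallmatrix}\right]\oplus\left[\begin{smallmatrix}E_{\D(B_B)}\\ \D(B_B)\end{smallmatrix}\right]_{e}$ with $\mathscr{S}(A, M, B) = {}^\perp T'$, together with its uniqueness up to multiplicities of indecomposable direct summands. Hence it suffices to prove that $_\Lambda T'\cong {}_\Lambda T$, where $_\Lambda T$ is the module displayed in the Corollary: once this is done, $\mathscr{S}(A,M,B)={}^\perp T$ and $_\Lambda T$ is cotilting, and uniqueness transfers verbatim, since any cotilting $_\Lambda N$ with ${}^\perp N=\mathscr{S}(A,M,B)$ then satisfies $\add({}_\Lambda N)=\add({}_\Lambda T')=\add({}_\Lambda T)$.

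The role of condition (IP) is precisely to make the left $A$-module $_AM\otimes_B\D(B_B)$ injective; I would first record this, reading it off from the defining form of (IP) (equivalently, via the natural isomorphism $M\otimes_B\D(B_B)\cong\D(\Hom_B(M,B))$ of left $A$-modules, valid because $M_B$ is finitely generated projective, which recasts this injectivity as a projectivity condition on the right $A$-module $\Hom_B(M,B)$). Granting it, one may take the injective envelope $E_{\D(B_B)}$ of $_AM\otimes_B\D(B_B)$ to be $M\otimes_B\D(B_B)$ itself and the inclusion $e$ to be $\mathrm{Id}$; with this choice the second summand of $_\Lambda T'$ is literally $\left[\begin{smallmatrix}M\otimes_B\D(B_B)\\ \D(B_B)\end{smallmatrix}\right]_{\rm Id}$, so that $_\Lambda T'={}_\Lambda T$. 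For an arbitrary choice of injective envelope the two matrix modules are still isomorphic in $\modcat{\Lambda}$, since injective envelopes are unique up to an isomorphism fixing the embedded submodule, and such an isomorphism of $A$-modules induces an isomorphism of the corresponding $\Lambda$-modules (the first summands coincide in any case).

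The genuine content is therefore concentrated in the single step of verifying that (IP) forces $_A(M\otimes_B\D(B_B))$ to be injective; this, together with the adjunction isomorphism above, is the only place where a small computation is needed, and I expect it to be the main --- but modest --- obstacle. Everything else (identifying the two matrix $\Lambda$-modules, and transporting the cotilting property and the uniqueness along $_\Lambda T\cong {}_\Lambda T'$) is immediate from Theorem~\ref{cotilting}.
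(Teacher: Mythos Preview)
Your proposal is correct and follows exactly the paper's (implicit) approach: Corollary~\ref{cotilting2} is stated immediately after Theorem~\ref{cotilting} with no separate proof, because under condition (IP) the injective envelope $E_{\D(B_B)}$ is $M\otimes_B\D(B_B)$ itself. One small remark: you describe verifying that $_A(M\otimes_B\D(B_B))$ is injective as ``the main --- but modest --- obstacle'', but in the paper condition (IP) is \emph{defined} (Subsection~1.6) to mean precisely that $M\otimes_B\D(B_B)$ is an injective left $A$-module and $M_B$ is projective, so there is nothing to verify and the adjunction reformulation is unnecessary.
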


\subsection{} A recollement is first introduced for triangulated categories ([BBD]), and then for
abelian categories ([MV], [PS], [Ku]). It becomes a powerful tool in triangulated categories and in abelian categories (see e.g. [K\"o], [H2], [IKM], [FP], [PV], [FZ]).
One can also consider recollements of additive categories in the similar way.
For a subcategory $\mathscr{X}$ of an additive category $\mathscr{A}$, recall that the objects of
the stable category $\mathscr{A}/\mathscr{X}$ are the objects of $\mathscr{A}$, and
$\Hom_{\mathscr{A}/\mathscr{X}}(M, N): = \Hom_{\mathscr{A}}(M, N)/(M, \mathscr{X}, N)$, where
$(M, \mathscr{X}, N)$ is the subgroup consisting of those morphisms factoring through objects of
$\mathcal{X}$. For an algebra $A$, denote $A\mbox{-}{\rm mod}/{\rm inj}(A)$ by  $A\mbox{-}\overline{\rm mod}$, where ${\rm inj}(A)$ is the subcategory
of the injective $A$-modules. Similarly, $\overline{\mathscr{S}(A, M, B)}$ is the stable category of $\mathscr{S}(A, M, B)$ respect to the subcategory
of the injective objects of $\mathscr{S}(A, M, B)$.

\begin{thm} \label{theorem-recollement-additive-cat} An $A$-$B$-bimodule satisfying the condition {\rm (IP)} induces a recollement of additive categories

\begin{center}
\begin{picture}(100,48)
\put(-10,20){\makebox(-22,2) {$A\mbox{-}\overline{\rm mod}$}}
\put(30,30){\vector(-1,0){30}}
\put(0,20){\vector(1,0){30}}
\put(30,10){\vector(-1,0){30}}
\put(50,20){\makebox(25,0.8) {$\overline{\mathscr{S}(A, M, B)}$}}
\put(95,20){\vector(1,0){30}}
\put(125,10){\vector(-1,0){30}}
\put(125,30){\vector(-1,0){30}}
\put(135,20){\makebox(25,0.5){$B\mbox{-}\overline{\rm mod}$.}}
\put(235,20){\makebox(25,0.5){$(1.1)$}}
\put(15,35){\makebox(3,1){\scriptsize$\overline{i^*}$}}
\put(15,24){\makebox(3,1){\scriptsize$\overline{i_*}$}}
\put(15,14){\makebox(3,1){\scriptsize$\overline{i^!}$}}
\put(109,35){\makebox(3,1){\scriptsize$\overline{j_!}$}}
\put(109,24){\makebox(3,1){\scriptsize$\overline{j^*}$}}
\put(109,14){\makebox(3,1){\scriptsize$\overline{j_*}$}}
\end{picture}
\end{center}
If in addition $A$ and $B$ are selfinjective algebras, then it is in fact a recollement of singularity categories.
\end{thm}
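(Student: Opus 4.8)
The plan is to construct the six functors explicitly from the matrix structure of $\Lambda$ and then verify the recollement axioms in the stable categories. For the underlying functors on module categories, I would use the standard description of $\Lambda$-mod as the category of triples $\left[\begin{smallmatrix} X\\ Y\end{smallmatrix}\right]_\phi$ with $\phi:M\otimes_B Y\to X$ an $A$-map. The candidate functors are the familiar ones attached to a triangular matrix algebra: $j_!\colon B\text{-mod}\to\Lambda\text{-mod}$ sends $Y\mapsto\left[\begin{smallmatrix} M\otimes_B Y\\ Y\end{smallmatrix}\right]_{\mathrm{Id}}$, $j^*$ forgets to the $Y$-coordinate, $j_*$ sends $Y\mapsto\left[\begin{smallmatrix} 0\\ Y\end{smallmatrix}\right]$, while $i_*\colon A\text{-mod}\to\Lambda\text{-mod}$ sends $X\mapsto\left[\begin{smallmatrix} X\\ 0\end{smallmatrix}\right]$, $i^*$ is the cokernel functor $\left[\begin{smallmatrix} X\\ Y\end{smallmatrix}\right]_\phi\mapsto\Cok\phi$, and $i^!$ picks out $X$. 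The first task is to check these restrict to $\mathscr{S}(A,M,B)$: because $M_B$ is projective under condition (IP), $M\otimes_B-$ is exact, so $j_!$ lands in the monomorphism category, and the others preserve it by inspection. I would then pass to stable categories by checking that each functor sends injective objects to injective objects (using the explicit list of indecomposable injectives of $\mathscr{S}(A,M,B)$, which by Theorem~\ref{cotilting}(1)(iii) and Corollary~\ref{cotilting2} are the summands of $T$, namely $\left[\begin{smallmatrix}\D(A_A)\\ 0\end{smallmatrix}\right]$ and $\left[\begin{smallmatrix} M\otimes_B\D(B)\\ \D(B_B)\end{smallmatrix}\right]_{\mathrm{Id}}$), so that the induced functors $\overline{i^*},\dots,\overline{j_*}$ are well defined.

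The verification of the recollement axioms then splits into: (a) the adjunction triples $(\overline{i^*},\overline{i_*},\overline{i^!})$ and $(\overline{j_!},\overline{j^*},\overline{j_*})$, inherited from the module-level adjunctions by the injective-preservation checks above; (b) full faithfulness of $\overline{i_*}$ and of $\overline{j_!},\overline{j_*}$, which at the module level is immediate and descends once one checks the relevant Hom-quotients agree; and (c) exactness/kernel-image compatibility, i.e.\ $\Ima\overline{i_*}=\Ker\overline{j^*}$, which amounts to saying that an object $\left[\begin{smallmatrix} X\\ Y\end{smallmatrix}\right]_\phi\in\mathscr{S}(A,M,B)$ becomes zero in $B\text{-}\overline{\mathrm{mod}}$ under $j^*$ precisely when $Y$ is injective, and that such an object is then stably isomorphic to one in the image of $i_*$. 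The last point uses that when $Y$ is injective the defining monomorphism $M\otimes_B Y\hookrightarrow X$ splits off the injective summand $\left[\begin{smallmatrix} M\otimes_B Y\\ Y\end{smallmatrix}\right]_{\mathrm{Id}}$ (here one needs $_AM\otimes_B\D(B_B)$ to sit inside an injective, which is exactly what (IP) together with the cotilting description provides), leaving $\left[\begin{smallmatrix}\Cok\phi\\ 0\end{smallmatrix}\right]\in\Ima i_*$.

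For the final assertion, suppose in addition $A$ and $B$ are selfinjective. Then by Corollary~\ref{corollary-frobenius-cat}, $\mathscr{S}(A,M,B)$ is a Frobenius category (here $_AM$ is projective by (IP) and $M_B$ is projective as standing hypothesis), so its stable category $\overline{\mathscr{S}(A,M,B)}$ is triangulated and, being the stable category of the Frobenius category of Gorenstein-projective $\Lambda$-modules, is triangle equivalent to the singularity category $D_{sg}(\Lambda)$; likewise $A\text{-}\overline{\mathrm{mod}}=D_{sg}(A)$ and $B\text{-}\overline{\mathrm{mod}}=D_{sg}(B)$. One then checks that the six functors of (1.1) are exact (triangle) functors: each is induced by an exact functor between the Frobenius categories that preserves the classes of conflations, so it commutes with the loop/suspension functors and sends conflation triangles to triangles. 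Combined with the adjunction and kernel-image data already established, this upgrades (1.1) to a recollement of triangulated categories, i.e.\ of singularity categories.

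\smallskip
The step I expect to be the main obstacle is (c): showing $\Ima\overline{i_*}=\Ker\overline{j^*}$ at the \emph{stable} level, because an object killed by $j^*$ need not literally have $Y=0$, only $Y$ injective, and one must produce a functorial stable isomorphism to an object of $\Ima i_*$ rather than merely an abstract one. Handling this cleanly requires a careful analysis of how the injective objects of $\mathscr{S}(A,M,B)$ — which, unlike in the quiver case, are built from an injective \emph{envelope} $E_{\D(B_B)}$ of $_AM\otimes_B\D(B_B)$ — interact with the defining pullback/monomorphism condition; condition (IP) is precisely what makes this envelope split off cleanly and is therefore doing the essential work throughout.
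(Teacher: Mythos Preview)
Your proposed $j_*$ is wrong: the triple $\left[\begin{smallmatrix} 0 \\ Y \end{smallmatrix}\right]$ lies in $\mathscr{S}(A,M,B)$ only when the structure map $M\otimes_B Y\to 0$ is monic, i.e.\ only when $M\otimes_B Y=0$. So your claim that the naive $j_*$ lands in the monomorphism category ``by inspection'' fails outright, and with it the adjunction $(\overline{j^*},\overline{j_*})$ and full faithfulness of $\overline{j_*}$. There is no module-level functor $B\mbox{-}{\rm mod}\to\mathscr{S}(A,M,B)$ right adjoint to $j^*$ to begin with.

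The paper's fix is the substantive part of the argument (Lemma~\ref{lemma-fully-faithfu-j}): the correct right adjoint $\overline{j_*}$ sends $Y$ to $\left[\begin{smallmatrix} E_Y \\ Y \end{smallmatrix}\right]_\psi$, where $\psi\colon M\otimes_B Y\hookrightarrow E_Y$ is an injective envelope in $A\mbox{-}{\rm mod}$. This functor is only well defined at the \emph{stable} level (since $E_Y$ involves a choice), and one must verify directly that (i) a $B$-map $h\colon Y\to Y'$ lifts to a well-defined stable morphism, (ii) the construction kills maps factoring through injective $B$-modules, (iii) the induced $\overline{j_*}$ is fully faithful, and (iv) the adjunction $(\overline{j^*},\overline{j_*})$ holds. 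All four verifications use the explicit list of injective objects of $\mathscr{S}(A,M,B)$ furnished by condition~(IP) via Proposition~\ref{enoughprojinjobj}(2). Your remaining ingredients---the other five functors, their adjunctions and preservation of injectives, full faithfulness of $\overline{i_*}$ and $\overline{j_!}$, and the $\Ima\overline{i_*}=\Ker\overline{j^*}$ splitting argument---do match the paper; you flagged (c) as the obstacle, but the actual gap is the missing sixth functor.
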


Here the singularity category $\mathcal D^b_{sg}(\Lambda)$ of an algebra $\Lambda$ is defined to be the Verdier quotient $\mathcal D^b_{sg}(\Lambda): = \mathcal D^b(\Lambda\mbox{-}{\rm mod})/K^b({\rm proj}(\Lambda))$,
where $\mathcal D^b(\Lambda\mbox{-}{\rm mod})$ is the bounded derived category, and $K^b({\rm proj}(\Lambda))$ is the bounded homotopy category. See R. Buchweitz [Buch] and  D. Orlov [O].

\subsection{} The dual of $\mathscr{S}(A, M, B)$ is the epimorphism category $\mathscr F (A, M, B)$. The right module version of $\mathscr S(A, M, B)$ is $\mathscr S(A, M, B)_r$, and $\mathscr S(A, M, B)_r$ is a resolving subcategory if and only if $_AM$ is projective; in this case, there is a unique basic cotilting right $\Lambda$-module $U$ such that $\mathscr{S}(A, M, B)_r ={}^\perp (U_\Lambda)$.
Then $\mathscr F (A, M, B)$ can be also described as $\D\mathscr{S}(A, M, B)_r$.
Ringel-Schmidmeier-Simson equivalence $\mathscr{S}(A, M, B)\cong \mathscr{F}(A, M, B)$ is studied. Such an equivalence implies a strong symmetry, and was first observed by C. Ringel and M. Schidmeier [RS2] for the case of $_AM_B = \ _AA_A$, and by D. Simson [S1] for a chain without relations, and then developed to acyclic quivers with monomial relations in [ZX].

\vskip5pt

We introduce exchangeable bimodules. If $_AM_B$ is exchangeable, then the unique left cotilting $\Lambda$-module $T$ with
$\mathscr{S}(A, M, B) = \ ^\perp T$ (cf. Corollary \ref{cotilting2}) can be endowed with a $\Lambda$-$\Lambda$-bimodule structure via the exchangeable bimodule isomorphism, such that the right module $T_\Lambda$ coincides with the unique right cotilting $\Lambda$-module $U$ with $\mathscr{S}(A, M, B)_r ={}^\perp (U_\Lambda)$.  This two-sided cotilting $\Lambda$-module $_\Lambda T_\Lambda$ enjoys
a good property in the sense that $\End_\Lambda(_\Lambda T)^{op}\cong \Lambda$ as algebras, and under this isomorphism of algebras,
$T_{\End_\Lambda(_\Lambda T)^{op}}$ coincides with $T_\Lambda$. These good properties of $T$ induce an {\rm RSS} equivalence:

\begin{thm} \label{RSS1} \ Let $_AM_B$ be an exchangeable  bimodule. Then $T =\left[\begin{smallmatrix}\D(A)\\ 0 \end{smallmatrix}\right]\oplus\left[\begin{smallmatrix}M\otimes_B \D(B)\\ \D(B) \end{smallmatrix}\right]_{\rm Id}$ can be endowed with a $\Lambda$-$\Lambda$-bimodule such that  $\D\Hom_\m(-, T):
\mathscr S(A, M, B)\cong \mathscr F(A, M, B)$ is an {\rm RSS} equivalence. \end{thm}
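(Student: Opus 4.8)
The plan is to produce the equivalence in three stages: (i) promote $T$ to a two-sided module; (ii) identify $\D\Hom_\m(-,T)$ as a genuine contravariant functor $\mathscr S(A,M,B)\to \mathscr F(A,M,B)$; (iii) prove it is an equivalence by exhibiting an explicit quasi-inverse built from the right-module cotilting module $U_\Lambda$, using the exchangeability isomorphism to glue the two sides. First I would record, from Corollary~\ref{cotilting2} and its right-hand analogue stated in \S1.4, that when $_AM_B$ is exchangeable the left cotilting module $_\Lambda T$ and the right cotilting module $U_\Lambda$ are ``the same'' object: the exchangeable isomorphism $M\otimes_B \D(B)\cong \D(A)\otimes_A M$ (or whatever the defining isomorphism of (IP)-plus-exchangeability supplies) lets one transport the right $\Lambda$-action on $U$ onto $T$, yielding $_\Lambda T_\Lambda$ with $\End_\Lambda({}_\Lambda T)^{\rm op}\cong\Lambda$ and $T_{\End_\Lambda({}_\Lambda T)^{\rm op}}=T_\Lambda$, exactly as announced in the paragraph preceding the theorem. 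These identities are what make $\D\Hom_\m(-,T)$ land on the correct side and be involutive.

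Next I would check that $\D\Hom_\m(-,T)$ sends $\mathscr S(A,M,B)$ into $\mathscr F(A,M,B)$. Since $T$ is cotilting of injective dimension $\le 1$ and $\mathscr S(A,M,B)={}^\perp T$ by Theorem~\ref{cotilting}(1), for $Z\in\mathscr S(A,M,B)$ one has $\Ext^m_\Lambda(Z,T)=0$ for all $m\ge 1$; hence $\Hom_\m(-,T)$ is exact on the resolving subcategory $\mathscr S(A,M,B)$ and carries it into the corresponding $\Ext$-perpendicular subcategory of $\modcat{\Lambda}$ computed over $\End_\Lambda({}_\Lambda T)^{\rm op}\cong\Lambda$ — which, after applying $\D$, is precisely $\D\mathscr S(A,M,B)_r=\mathscr F(A,M,B)$ by the description in \S1.4. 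The relevant input here is the classical cotilting theorem identifying $^\perp T$ with the image of $^\perp(T_\Gamma)$ under $\Hom_\Lambda(-,T)$ when $T$ is cotilting; I would apply it on both sides and match them via the bimodule structure.

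For the equivalence itself, the symmetric setup gives a candidate quasi-inverse $\D\Hom_{\m^{\rm op}}(-,T)\colon \mathscr F(A,M,B)\to\mathscr S(A,M,B)$ (equivalently $\Hom_\Lambda(\D(-),T)$), and I would show the two composites are naturally isomorphic to the identity. Because every $Z\in\mathscr S(A,M,B)$ admits, by Theorem~\ref{cotilting}, a short exact sequence with terms in $\add({}_\Lambda T)$ (the cotilting coresolution), the double-dual/evaluation map $Z\to \D\Hom_{\m^{\rm op}}(\D\Hom_\m(Z,T),T)$ can be checked to be an isomorphism first on $\add({}_\Lambda T)$ — where it reduces to the algebra isomorphism $\End_\Lambda({}_\Lambda T)^{\rm op}\cong\Lambda$ and the identification $T_{\End_\Lambda({}_\Lambda T)^{\rm op}}=T_\Lambda$ — and then propagated to all of $\mathscr S(A,M,B)$ by the Five Lemma applied to the coresolution, using exactness of $\Hom_\m(-,T)$ there. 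The analogous argument on $\mathscr F(A,M,B)$, using its cotilting coresolutions by $\add(\D T_\Lambda)$, finishes the proof.

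The main obstacle I expect is bookkeeping the \emph{bimodule} compatibilities: one must verify that the left $\Lambda$-action and the right $\Lambda$-action on $T$ obtained via the exchangeable isomorphism are genuinely compatible (i.e.\ $T$ really is a $\Lambda$-$\Lambda$-bimodule, not just left- and right-cotilting separately), and that under $\End_\Lambda({}_\Lambda T)^{\rm op}\cong\Lambda$ the functor $\Hom_\m(-,T)$ intertwines the duality $\D$ on $\Lambda$-mod with the duality on $\Gamma$-mod in the way needed to land in $\mathscr F(A,M,B)$ rather than some twisted copy of it. Concretely this comes down to a diagram chase with the matrix entries of $\Lambda=\left[\begin{smallmatrix}A&M\\0&B\end{smallmatrix}\right]$, the components $\left[\begin{smallmatrix}\D(A)\\0\end{smallmatrix}\right]$ and $\left[\begin{smallmatrix}M\otimes_B\D(B)\\ \D(B)\end{smallmatrix}\right]_{\rm Id}$ of $T$, and the exchangeability datum on $M$; everything else is the standard cotilting-equivalence machinery.
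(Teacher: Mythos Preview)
Your outline for the equivalence part is essentially the paper's Step~1: promote $T$ to a $\Lambda$-$\Lambda$-bimodule via the exchangeable isomorphism (this is Lemma~\ref{RSS0}), record $\End_\Lambda({}_\Lambda T)^{\rm op}\cong\Lambda$ with $T_{\End_\Lambda({}_\Lambda T)^{\rm op}}=T_\Lambda$ (Lemma~\ref{RSS}), and then run cotilting duality. The paper phrases the last step slightly differently: it shows $\Hom_\Lambda(-,{}_\Lambda T)\colon \mathscr S(A,M,B)\to \mathscr S(A,M,B)_r$ is a duality by invoking Wakamatsu's result (Lemma~\ref{wakamatsu}) together with $\mathscr S(A,M,B)={}^\perp({}_\Lambda T)=\mathcal X_{{}_\Lambda T}$ from [AR, Thm.~5.4(b)], and then composes with $\D\colon \mathscr S(A,M,B)_r\cong \mathscr F(A,M,B)$ (Proposition~\ref{smonsepi}). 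Your Five-Lemma/coresolution argument is the standard way to prove Wakamatsu's lemma, so this is a difference of packaging rather than of substance. One imprecision: you assert every $Z\in{}^\perp T$ has a \emph{short} exact sequence with both terms in $\add(T)$. What you actually get from Proposition~\ref{enoughprojinjobj}(2) is $0\to Z\to T_0\to Z_1\to 0$ with $T_0\in\add(T)$ and $Z_1\in{}^\perp T$, not $Z_1\in\add(T)$; this is exactly the input to Wakamatsu's argument, and you should either cite that result or make the induction precise.

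There is, however, a genuine omission. An RSS equivalence (Definition~\ref{rss}) is not merely an equivalence $\mathscr S(A,M,B)\cong\mathscr F(A,M,B)$: it must additionally satisfy
\[
F\left[\begin{smallmatrix}X\\0\end{smallmatrix}\right]\cong\left[\begin{smallmatrix}X\\\Hom_A(M,X)\end{smallmatrix}\right]_\varphi,
\qquad
F\left[\begin{smallmatrix}M\otimes_B Y\\Y\end{smallmatrix}\right]_{\rm Id}\cong\left[\begin{smallmatrix}0\\Y\end{smallmatrix}\right]
\]
for all $X\in A\text{-mod}$, $Y\in B\text{-mod}$. Your proposal does not address these identifications at all. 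They are not formal consequences of the cotilting machinery: the paper devotes its entire Appendix to computing $\Hom_\Lambda(\left[\begin{smallmatrix}X\\0\end{smallmatrix}\right],T)$ and $\Hom_\Lambda(\left[\begin{smallmatrix}M\otimes_B Y\\Y\end{smallmatrix}\right]_{\rm Id},T)$ explicitly as right $\Lambda$-modules (via the decomposition $T=Te_1\oplus Te_2$ and the right action described in Lemma~\ref{RSS0}), and the first of these requires a nontrivial check that a certain map $\psi\colon\D\Hom_A(M,X)\to\Hom_A(X,\D(A)\otimes_A M)$ is a right $B$-map, using the projectivity of $_AM$. This is where the exchangeability hypothesis is used a second time, beyond constructing the bimodule structure. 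Without this step you have only shown that $\D\Hom_\Lambda(-,T)$ is \emph{an} equivalence, not an RSS equivalence.
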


The Nakayama functor $\mathcal N_\m$ gives an {\rm RSS} equivalence if and only if both $A$ and $B$ are Frobenius algebras (Proposition \ref{NakayamaandRSS}). Examples show that if $_AM_B$ is not exchangeable, then an {\rm RSS} equivalence can not be guaranteed.
Examples also show that the monomorphism category $\mathscr S(A, M, B)$ is not the separated monomorphism category of
the corresponding quiver in the sense of [ZX], in general (see Example \ref{nonexchangeable}). However, we do not know a sufficient and necessary condition and the uniqueness of an RSS equivalence. See Subsection 5.5.

\subsection{} Let $M$ be an $A$-$B$-bimodule. The multiplication of the associated matrix algebra
$\Lambda =\left[\begin{smallmatrix}
A&M\\0&B
\end{smallmatrix}\right]$ is given by
$\left[\begin{smallmatrix}
a & m\\ 0 & b
\end{smallmatrix}\right]\left[\begin{smallmatrix}
a' & m'\\ 0 & b'
\end{smallmatrix}\right]=\left[\begin{smallmatrix}
aa' & am'+mb'\\ 0 & bb'
\end{smallmatrix}\right]$.  Each left $\Lambda$-module is identified with a triple $\left[\begin{smallmatrix}
X\\ Y
\end{smallmatrix}\right]_{\phi}$, where $X\in A$-mod, $Y\in B$-mod, and $\phi: M\otimes_BY\rightarrow X$ is an $A$-map; and a $\Lambda$-map is identified with a pair $\left[\begin{smallmatrix}
f_1 \\ f_2
\end{smallmatrix}\right]:\left[\begin{smallmatrix}
X_1\\ Y_1
\end{smallmatrix}\right]_{\phi_1}\rightarrow \left[\begin{smallmatrix}
X_2\\ Y_2
\end{smallmatrix}\right]_{\phi_2}$, where $f_1: X_1\rightarrow X_2$ is an $A$-map, and $f_2: Y_1\rightarrow Y_2$ a $B$-map, such that the diagram
$$\xymatrix@R=0.5cm{
M\otimes_BY_1\ar[d]_{\phi_1}\ar[r]^{1\otimes f_2} &M\otimes_BY_2\ar[d]^{\phi_2}\\
X_1\ar[r]^{f_1} & X_2}$$
commutes. Under this identification, the indecomposable projective $\Lambda$-modules are
exactly $\left[\begin{smallmatrix}
   P \\
   0
\end{smallmatrix}\right]$ and $\left[\begin{smallmatrix}
   M\otimes_B Q  \\
   Q
\end{smallmatrix}\right]_{\rm Id}$, where $P$ and $Q$ run over the indecomposable projective $A$-modules and $B$-modules, respectively. The indecomposable injective $\Lambda$-modules are $\left[\begin{smallmatrix}
   I  \\
   {\rm Hom}_A (M,I)
\end{smallmatrix}\right]_{\varphi}
$ and $ \left[\begin{smallmatrix}
   0  \\
   J
\end{smallmatrix}\right]$, where $I$ and $J$ run over the indecomposable injective $A$-modules and $B$-modules, respectively ([ARS, p.73]).
Throughout, for any left $A$-module $X$, we denote by $\varphi = \varphi_X$ the left $A$-map $M\otimes_B \Hom_A(M, X) \longrightarrow X$ given by
$\varphi(m\otimes f) = f(m),$ i.e., the adjunction isomorphism $\Hom_A(M\otimes_B \Hom_A(M, X), X)\cong \Hom_B(\Hom_A(M, X), \Hom_A(M, X))$ sends $\varphi$ to ${\rm Id}_{{\rm Hom}_A(M, X)}$.
We will call $\varphi$ the involution map.

\subsection{\bf Conditions on a bimodule} A bimodule $_AM_B$  satisfies the condition {\rm (IP)}, if $M\otimes_B\D(B_B)$ is an injective left $A$-module and $M_B$ is projective.

\vskip5pt

A bimodule $_AM_B$ is {\it exchangeable}, if both $_AM$ and $M_B$ are projective and there is an $A$-$B$-bimodule isomorphism
$\D(_A A_A)\otimes_AM\cong M\otimes_B\D(_B B_B),$ which is called {\it an exchangeable  bimodule isomorphism}.

\vskip5pt

By adjunction isomorphisms we have $A$-$B$-bimodule isomorphisms $$\D(\D A\otimes_AM)\cong \Hom_A(M, A), \ \ \D(M\otimes_B\D B)\cong \Hom_B(M, B).$$
Thus $_AM_B$ is exchangeable  if and only if there is an $A$-$B$-bimodule isomorphism $\Hom_A(M, A)\cong\Hom_B(M, B)$; and if and only if
there is an $A$-$B$-bimodule isomorphism $\mathcal N_A(_AM) \cong \mathcal N_B(M_B)$, where $\mathcal N_A$ denotes the Nakayama functor $\D\Hom_A(-, A)$.

\vskip5pt

For $_AN$, let ${\rm add}(N)$ be the
subcategory of $A$-mod of direct summands of finite direct sums of
$N$.

\begin{exm} \label{mainexample} $(1)$ \ An exchangeable  bimodule $_AM_B$ satisfies the condition ${\rm (IP)}$, and $\D(_AA)\otimes_AM$ is an injective right $B$-module.

\vskip5pt

In fact, since $_AM$ is projective, $\D(_A A_A)\otimes_AM\in {\rm add}(\D(_A A_A)\otimes_AA) = {\rm add}(\D(A_A))$, so $\D(_A A_A)\otimes_AM$ is an injective left $A$-module. Thus $M\otimes_B\D(_B B_B)\cong \D(_A A_A)\otimes_AM$ is an injective left $A$-module. Similarly, one can prove that $\D(_AA)\otimes_AM$ is an injective right $B$-module.

\vskip5pt

$(2)$ \ If $B=A$ and $M=A\oplus\cdots\oplus A$, then $_AM_A$ is an exchangeable  bimodule.

\vskip5pt

{\rm (3)} \ For an algebra $A$, let $B: = A\oplus \cdots \oplus A,$ and $_AM_B: = \ _AB_B$. Then $_AM_B$ is an exchangeable  bimodule.

\vskip5pt

{\rm (4)} \ For an algebra $B$, let $A: = B\oplus \cdots \oplus B, $ and $_AM_B: = \ _AA_B$. Then $_AM_B$ is an exchangeable  bimodule.

\vskip5pt

{\rm (5)} \ An algebra $A$ over field $k$ is symmetric, if $\D(_AA_A) \cong \ _AA_A$ as $A$-$A$-bimodules. If both $A$ and $B$ are symmetric algebras and $M = \ _AP\otimes_kQ_B$, where $_AP$ and $Q_B$ are projective, then $_AM_B$ is an exchangeable  bimodule.

\vskip5pt

$(6)$ \ An algebra $B$ is {\rm Frobenius}, if $\D(B_B)\cong{_BB}$ as left $B$-modules.
If $B$ is a {\rm Frobenius} algebra and $_AM_B$ is a bimodule with $_AM$ injective and $M_B$ projective, then $_AM_B$ satisfies the condition ${\rm (IP)}$.

\vskip5pt

$(7)$ \ Let $B$ be a selfinjective algebra, and $_AM_B$ a bimodule with $M_B$ projective. Then
$_AM_B$ satisfies the condition ${\rm (IP)}$ if and only if $_AM$ is injective.
In particular, if both $A$ and $B$ are selfinjective $k$-algebras and $M = \ _AP\otimes_kQ_B$, where $_AP$ and $Q_B$ are projective, then $_AM_B$ satisfies the condition ${\rm (IP)}$.

\vskip5pt

In fact, since $B$ is a selfinjective algebra,  $\D(B_B) \in {\rm add} (_BB)$, and
hence $_AM\otimes_B\D(B_B)\in {\rm add} (_AM)$. So $M\otimes_B\D(B_B)$  is an injective $A$-module if and only if $_AM$ is injective.
\end{exm}

\section{\bf Monomorphism categories induced by bimodules}

\subsection{} Recall that {\it the monomorphism category} $\mathscr{S}(A, M, B)$ induced by bimodule $_AM_B$
is the subcategory of $\Lambda$-mod consisting of
$\left[\begin{smallmatrix}X\\ Y\end{smallmatrix}\right]_{\phi}$
such that $\phi: M\otimes_B Y\longrightarrow X$ is a monic $A$-map.
So it contains all the projective $\Lambda$-modules and is closed under direct sums and direct summands.

\begin{lem} \label{scat0} \ Let $_AM_B$ be a bimodule. Then $\mathscr{S}(A, M, B)$ is closed under extensions.
Thus $\mathscr{S}(A, M, B)$ is an exact category with the canonical exact structure$,$ and hence {\rm a Krull-Schmidt} category.
\end{lem}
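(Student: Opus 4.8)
The plan is to verify the two assertions in order: first that $\mathscr{S}(A,M,B)$ is closed under extensions inside $\Lambda\text{-mod}$, and then to deduce the exact-category and Krull--Schmidt consequences formally.

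\textbf{Step 1: Closure under extensions.} Suppose
$$0\longrightarrow \left[\begin{smallmatrix}X_1\\ Y_1\end{smallmatrix}\right]_{\phi_1}\longrightarrow \left[\begin{smallmatrix}X\\ Y\end{smallmatrix}\right]_{\phi}\longrightarrow \left[\begin{smallmatrix}X_2\\ Y_2\end{smallmatrix}\right]_{\phi_2}\longrightarrow 0$$
is a short exact sequence of $\Lambda$-modules with the two outer terms in $\mathscr{S}(A,M,B)$; I must show $\phi\colon M\otimes_B Y\to X$ is injective. The point is that a short exact sequence of $\Lambda$-modules amounts to short exact sequences $0\to X_1\to X\to X_2\to 0$ of $A$-modules and $0\to Y_1\to Y\to Y_2\to 0$ of $B$-modules, together with the commuting-square compatibility of the maps $\phi_i$ from the definition of $\Lambda$-maps recalled in the preliminaries. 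Applying the right-exact functor $M\otimes_B-$ to the sequence of $Y$'s gives an exact row $M\otimes_B Y_1\to M\otimes_B Y\to M\otimes_B Y_2\to 0$, and I get a commutative diagram with this row on top and $0\to X_1\to X\to X_2\to 0$ on the bottom, with vertical maps $\phi_1,\phi,\phi_2$. Now I would run a diagram chase (essentially the snake lemma, or a direct element chase): take $u\in M\otimes_B Y$ with $\phi(u)=0$; its image in $M\otimes_B Y_2$ is killed by $\phi_2$, hence is $0$ since $\phi_2$ is monic, so $u$ comes from some $v\in M\otimes_B Y_1$; then $\phi_1(v)$ maps to $\phi(u)=0$ in $X$, and since $X_1\hookrightarrow X$ is monic, $\phi_1(v)=0$, whence $v=0$ as $\phi_1$ is monic, so $u=0$. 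This gives injectivity of $\phi$, i.e. the middle term lies in $\mathscr{S}(A,M,B)$.

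\textbf{Step 2: Exact structure and Krull--Schmidt.} Since $\mathscr{S}(A,M,B)$ is a full subcategory of the abelian category $\Lambda\text{-mod}$ that is closed under extensions (Step 1) and, as noted just before the lemma, closed under direct sums and direct summands, it inherits the canonical (Quillen) exact structure in which the admissible short exact sequences are precisely the short exact sequences of $\Lambda$-modules with all three terms in $\mathscr{S}(A,M,B)$; this is the standard fact that an extension-closed subcategory of an abelian category is an exact category. For the Krull--Schmidt property I would note that $\Lambda\text{-mod}$ is a Krull--Schmidt category (as $\Lambda$ is an Artin algebra, endomorphism rings of finitely generated modules are semiperfect), and a full subcategory closed under direct summands and finite direct sums inherits this: every object of $\mathscr{S}(A,M,B)$ decomposes into indecomposables with local endomorphism rings, and these indecomposable summands again lie in $\mathscr{S}(A,M,B)$ by closure under summands, so uniqueness of the decomposition is inherited from $\Lambda\text{-mod}$.

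\textbf{Main obstacle.} There is no serious obstacle; the only point requiring a little care is Step 1, namely making the diagram chase rigorous despite $M\otimes_B-$ being only right exact — one must not assume the top row is left exact, and the argument above is arranged so that injectivity of $\phi_1$ and of $X_1\hookrightarrow X$ compensate for the possible failure of $M\otimes_B Y_1\to M\otimes_B Y$ to be monic. (If one prefers, the same conclusion follows from the snake lemma applied to the vertical maps $\phi_1,\phi,\phi_2$ between the two rows, reading off that $\ker\phi$ surjects onto $\ker\phi_2=0$ with kernel a quotient of $\ker\phi_1=0$.) The exact-structure and Krull--Schmidt parts are then purely formal citations of standard facts.
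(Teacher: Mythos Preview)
Your proof is correct and follows essentially the same approach as the paper: the paper sets up the identical commutative diagram with the right-exact top row $M\otimes_B Y_1\to M\otimes_B Y\to M\otimes_B Y_2\to 0$ and the exact bottom row $0\to X_1\to X\to X_2\to 0$, then invokes the Snake Lemma directly to conclude $\phi$ is monic. Your element chase is precisely the unpacked content of that Snake Lemma application, and your Step~2 spells out the standard facts the paper leaves implicit.
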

\begin{proof} \ For an exact sequence
$0\rightarrow {\left[\begin{smallmatrix}
X_1\\ Y_1
\end{smallmatrix}\right]_{\phi_1}}\stackrel {\left[\begin{smallmatrix}
f_1 \\ g_1
\end{smallmatrix}\right]}\longrightarrow  {\left[\begin{smallmatrix}
X\\ Y
\end{smallmatrix}\right]_{\phi}}\stackrel {\left[\begin{smallmatrix}
f_2 \\ g_2
\end{smallmatrix}\right]}\longrightarrow {\left[\begin{smallmatrix}
X_2\\ Y_2
\end{smallmatrix}\right]_{\phi_2}}\rightarrow 0$ in $\Lambda$-mod with $\left[\begin{smallmatrix}
X_1\\ Y_1
\end{smallmatrix}\right]_{\phi_1}\in \mathscr{S}(A, M, B)$ and $\left[\begin{smallmatrix}
X_2\\ Y_2
\end{smallmatrix}\right]_{\phi_2}\in \mathscr{S}(A, M, B)$, we get a commutative diagram
$$\xymatrix @R=0.4cm {
& M\otimes_B Y_1\ar[d]^{\phi_1}\ar[r]^{1\otimes g_1} &M\otimes_BY\ar[d]^{\phi}\ar[r]^{1\otimes g_2} &M\otimes_BY_2\ar[d]^{\phi_2}\ar[r] & 0\\
0\ar[r] & X_1\ar[r]^-{f_1} & X\ar[r]^{f_2} & Y_2\ar[r] & 0}$$
with exact rows. It follows from the Snake Lemma that $\phi$ is monic, i.e., $\left[\begin{smallmatrix}
X\\ Y
\end{smallmatrix}\right]_{\phi}\in \mathscr{S}(A, M, B).$ \end{proof}

\begin{prop} \label{enoughprojinjobj} \ Let $_AM_B$ be a bimodule such that $M_B$ is projective. Then

\vskip5pt

$(1)$ \ $\mathscr{S}(A, M, B)$ has enough projective objects$;$ and the indecomposable projective objects of $\mathscr{S}(A, M, B)$ are exactly the indecomposable projective $\Lambda$-modules.

\vskip5pt

$(2)$ \ $\mathscr{S}(A, M, B)$ has enough injective objects$;$ and the indecomposable injective objects are exactly $\left[\begin{smallmatrix}I\\ 0 \end{smallmatrix}\right]$ and $\left[\begin{smallmatrix}E_J\\ J \end{smallmatrix}\right]_{e}$, where $I$ $($resp. $J$$)$ runs over indecomposable injective left $A$-modules $($resp. $B$-modules$)$, and $E_J$ is an injective envelope of the left $A$-module $M\otimes_BJ$ with inclusion $e: M\otimes_BJ\hookrightarrow E_J$.

\vskip5pt

In particular, if $M$ satisfies the condition {\rm (IP)}, then the indecomposable injective objects of $\mathscr{S}(A, M, B)$ are exactly $\left[\begin{smallmatrix} I \\ 0 \end{smallmatrix}\right]$ and $\left[\begin{smallmatrix}M\otimes_BJ\\ J \end{smallmatrix}\right]_{\rm Id}$.
\end{prop}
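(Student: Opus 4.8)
The plan is to treat $(1)$ and $(2)$ in turn, in each case reducing the problem to a Snake-Lemma diagram chase that exploits the standing hypothesis: $M_B$ projective, equivalently $M\otimes_B-$ exact.

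For $(1)$: every projective $\Lambda$-module lies in $\mathscr{S}(A, M, B)$ and is there a projective \emph{object}, since an admissible epimorphism of $\mathscr{S}(A, M, B)$ is in particular a surjection in $\modcat{\Lambda}$ and $\Hom_\Lambda(P,-)$ preserves surjections for $P$ projective. Conversely, given $\left[\begin{smallmatrix}X\\ Y\end{smallmatrix}\right]_{\phi}\in\mathscr{S}(A, M, B)$, I take its projective cover $\pi\colon\mathbf P\to\left[\begin{smallmatrix}X\\ Y\end{smallmatrix}\right]_{\phi}$ in $\modcat{\Lambda}$. By the description of projective $\Lambda$-modules recalled above, $\mathbf P=\left[\begin{smallmatrix}P_0\oplus(M\otimes_BQ_0)\\ Q_0\end{smallmatrix}\right]_{\psi}$ with $P_0,Q_0$ projective over $A,B$ and $\psi\colon\xi\mapsto(0,\xi)$ monic, so $\mathbf P\in\mathscr{S}(A, M, B)$. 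Applying the exact functor $M\otimes_B-$ to the exact sequence of $B$-parts of $0\to K\to\mathbf P\to\left[\begin{smallmatrix}X\\ Y\end{smallmatrix}\right]_{\phi}\to0$ keeps it exact, and the Snake Lemma for the resulting $3\times 3$ diagram, whose middle vertical $\psi$ is monic, forces the structure map of $K$ to be monic; hence $K\in\mathscr{S}(A, M, B)$ and $\pi$ is an admissible epimorphism. So $\mathscr{S}(A, M, B)$ has enough projectives; and a projective object $S$ makes $\pi\colon\mathbf P\to S$ split, so $S$ is a projective $\Lambda$-module, which identifies the indecomposable projective objects.

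For $(2)$, I first build the embedding. Given $\left[\begin{smallmatrix}X\\ Y\end{smallmatrix}\right]_{\phi}\in\mathscr{S}(A, M, B)$, choose an injective envelope $\iota\colon Y\hookrightarrow J$ over $B$; then $1\otimes\iota\colon M\otimes_BY\hookrightarrow M\otimes_BJ$ is monic, and since $\phi$ is monic and $E_J$ is injective over $A$ I can extend $e\,(1\otimes\iota)$ along $\phi$ to get $g_1\colon X\to E_J$ with $g_1\phi=e\,(1\otimes\iota)$; also take $h_1\colon X\twoheadrightarrow\Cok\phi\hookrightarrow I$ with $I$ injective over $A$. Then $\left[\begin{smallmatrix}(g_1,h_1)\\ \iota\end{smallmatrix}\right]\colon\left[\begin{smallmatrix}X\\ Y\end{smallmatrix}\right]_{\phi}\to\left[\begin{smallmatrix}E_J\\ J\end{smallmatrix}\right]_{e}\oplus\left[\begin{smallmatrix}I\\ 0\end{smallmatrix}\right]$ is a $\Lambda$-map, monic in each component: $\iota$ is monic, and on the $A$-part $g_1(x)=0=h_1(x)$ forces $x\in\phi(M\otimes_BY)$, say $x=\phi(w)$, whence $e\,(1\otimes\iota)(w)=g_1\phi(w)=0$ gives $w=0$ as $e$ and $1\otimes\iota$ are monic. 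Finally, applying the Snake Lemma to the short exact sequence of this $\Lambda$-monomorphism, again using exactness of $M\otimes_B-$ on $0\to Y\to J\to J/Y\to0$, shows the structure map of the cokernel is monic, so the map is an admissible monomorphism of $\mathscr{S}(A, M, B)$ into a finite direct sum of objects of the two stated forms. Next I check those objects are injective: by Lemma~\ref{scat0}, $\mathscr{S}(A, M, B)$ is extension-closed, so its $\Ext^1$ coincides with $\Ext^1_\Lambda$ between its objects, and it suffices that $\Ext^1_\Lambda(Z,-)$ kills them for $Z=\left[\begin{smallmatrix}X'\\ Y'\end{smallmatrix}\right]_{\phi'}\in\mathscr{S}(A, M, B)$. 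Applying $\Hom_\Lambda(Z,-)$ to $0\to\left[\begin{smallmatrix}I\\ 0\end{smallmatrix}\right]\to\left[\begin{smallmatrix}I\\ \Hom_A(M,I)\end{smallmatrix}\right]_{\varphi}\to\left[\begin{smallmatrix}0\\ \Hom_A(M,I)\end{smallmatrix}\right]\to0$, whose middle term is an injective $\Lambda$-module, adjunction identifies the relevant map in the long exact sequence with $\Hom_A(\phi',I)\colon\Hom_A(X',I)\to\Hom_A(M\otimes_BY',I)$, which is onto since $I$ is injective and $\phi'$ monic, so $\Ext^1_\Lambda(Z,\left[\begin{smallmatrix}I\\ 0\end{smallmatrix}\right])=0$. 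Applying $\Hom_\Lambda(Z,-)$ to $0\to\left[\begin{smallmatrix}E_J\\ 0\end{smallmatrix}\right]\to\left[\begin{smallmatrix}E_J\\ J\end{smallmatrix}\right]_{e}\to\left[\begin{smallmatrix}0\\ J\end{smallmatrix}\right]\to0$, the outer $\Ext^1$'s are $\Ext^1_\Lambda(Z,\left[\begin{smallmatrix}E_J\\ 0\end{smallmatrix}\right])=0$ (previous case) and $\Ext^1_\Lambda(Z,\left[\begin{smallmatrix}0\\ J\end{smallmatrix}\right])\cong\Ext^1_B(Y',J)=0$ (as $N\mapsto\left[\begin{smallmatrix}0\\ N\end{smallmatrix}\right]$ is exact, right adjoint to taking $B$-parts, hence preserves injectives), so the middle one vanishes.

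Combining the two halves, $\mathscr{S}(A, M, B)$ has enough injectives. For the list of indecomposables I note that $\left[\begin{smallmatrix}I\\ 0\end{smallmatrix}\right]$ and $\left[\begin{smallmatrix}E_J\\ J\end{smallmatrix}\right]_{e}$ are additive in $I$, $J$ (injective envelope commutes with finite direct sums), that $\left[\begin{smallmatrix}E_J\\ J\end{smallmatrix}\right]_{e}$ is indecomposable when $J$ is — a summand with zero $B$-part would be a nonzero summand of $E_J$ meeting the essential submodule $M\otimes_BJ$ trivially — and that all these objects are pairwise non-isomorphic; as every injective object is a summand of an embedding target above, this gives the asserted list. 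The final clause holds because under ${\rm (IP)}$ each indecomposable injective $B$-module $J$ is a summand of $\D(B_B)$, so $M\otimes_BJ$ is a summand of the injective $A$-module $M\otimes_B\D(B_B)$, whence $E_J=M\otimes_BJ$ and $e={\rm Id}$. The main obstacle is the admissible-monomorphism check: one must prove not merely injectivity of the constructed map but that its cokernel again lies in $\mathscr{S}(A, M, B)$, and this is exactly where projectivity of $M_B$ is indispensable, through the exactness of $M\otimes_B-$ that feeds the Snake Lemma.
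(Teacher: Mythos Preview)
Your proof is correct. Part $(1)$ and the ``enough injectives'' half of part $(2)$ follow essentially the same Snake-Lemma strategy as the paper: construct a surjection from (resp.\ injection into) a specific projective (resp.\ the stated injective) object, then use exactness of $M\otimes_B-$ in a $3\times3$ diagram to see that the kernel (resp.\ cokernel) again lies in $\mathscr{S}(A,M,B)$. The paper writes out the $3\times3$ diagrams explicitly, but the content is the same.

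Where you genuinely diverge is in proving that $\left[\begin{smallmatrix}I\\0\end{smallmatrix}\right]$ and $\left[\begin{smallmatrix}E_J\\J\end{smallmatrix}\right]_e$ are injective objects. The paper does this by a direct element-level lifting: given an admissible monomorphism and a map into the target, it manufactures the extension by hand, chasing through cokernels. You instead compute $\Ext^1_\Lambda$ against these objects by embedding each in a short exact sequence whose other terms are controlled --- using the injective $\Lambda$-module $\left[\begin{smallmatrix}I\\\Hom_A(M,I)\end{smallmatrix}\right]_\varphi$ and the injective $\Lambda$-module $\left[\begin{smallmatrix}0\\J\end{smallmatrix}\right]$ --- and identifying the connecting maps via adjunction. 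This buys you a cleaner, coordinate-free argument that makes transparent \emph{why} monicity of $\phi'$ is exactly what is needed (it is what makes $\Hom_A(\phi',I)$ surjective). The paper's approach, by contrast, is more self-contained and avoids invoking the description of injective $\Lambda$-modules or the extension-closure identification $\Ext^1_{\mathscr{S}}=\Ext^1_\Lambda$, at the cost of a longer diagram chase. Your treatment of indecomposability (via essentiality of $M\otimes_BJ$ in $E_J$) is also more explicit than the paper's, which simply declares it clear.
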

\begin{proof} $(1)$ \ Projective $\Lambda$-modules are clearly projective objects of $\mathscr{S}(A, M, B)$.
 For any object $\left[\begin{smallmatrix}X\\ Y \end{smallmatrix}\right]_{\phi}\in\mathscr{S}(A, M, B)$, taknig projective covers $\pi_Y: \ _BQ\to _BY$ and $\pi_C: \ _AP\to \ _A\Cok(\phi)$, we get exact sequences $0\rightarrow \Ker(\pi_Y)\stackrel {i_Y}\rightarrow Q\stackrel {\pi_Y} \rightarrow Y\rightarrow 0$ and $0\rightarrow \Ker(\pi_C)\stackrel {i_C} \rightarrow P\stackrel{\pi_C} \rightarrow \Cok(\phi)\rightarrow 0$.
 Consider the exact sequence $0 \longrightarrow M\otimes_B Y \stackrel \phi\longrightarrow X \stackrel \pi \longrightarrow {\rm Coker} (\phi) \longrightarrow 0.$
We get an $A$-map $\theta: P\to X$ such that $\pi_C=\pi\theta$, and hence the commutative diagram in $A$-mod with exact rows
$$\xymatrix@R=0.5cm{0\ar[r] & M\otimes_BQ\ar[r]^-{\left[\begin{smallmatrix} 1\\ 0 \end{smallmatrix}\right]}\ar[d]_-{1\otimes\pi_Y} & (M\otimes_BQ)\oplus P\ar[r]^-{[0, 1]}\ar[d]_-{[\phi(1\otimes\pi_Y), \theta]} & P\ar[r]\ar[d]^{\pi_C} & 0 \\
0\ar[r] & M\otimes_BY\ar[r]_-{\phi} & X\ar[r]_-{\pi} & \Cok(\phi)\ar[r] & 0.}$$ Since $M_B$ is projective,
$0\longrightarrow M\otimes_B\Ker(\pi_Y)\stackrel{1\otimes{i_Y}}\longrightarrow M\otimes_BQ \stackrel{1\otimes{\pi_Y}}\longrightarrow M\otimes_BY\longrightarrow 0$ is an exact sequence of $A$-module. By the Snake Lemma
we get the commutative diagram in $A$-mod with exact rows and columns
$$\xymatrix@R=0.5cm{& 0\ar[d] & 0\ar[d] & 0\ar[d] & \\
0\ar[r] & M\otimes_B\Ker(\pi_Y)\ar@{-->}[r]^-{\psi}\ar[d]_-{1\otimes{i_Y}} & N\ar@{-->}[r]\ar@{-->}[d] & \Ker(\pi_C)\ar[r]\ar[d]^-{i_C} & 0 \\
0\ar[r] & M\otimes_BQ\ar[r]^-{\left[\begin{smallmatrix} 1\\ 0 \end{smallmatrix}\right]}\ar[d]_-{1\otimes\pi_Y} & (M\otimes_BQ)\oplus P\ar[r]^-{[0, 1]}\ar[d]_-{[\phi(1\otimes\pi_Y), \theta]} & P\ar[r]\ar[d]^{\pi_C} & 0 \\
0\ar[r] & M\otimes_BY\ar[r]_-{\phi}\ar[d] & X\ar[r]_-{\pi}\ar[d] & \Cok(\phi)\ar[r]\ar[d] & 0 \\
& 0 & 0 & 0 & }$$
So the left and middle columns give the exact sequence
$$\xymatrix{0\ar[r] & {\left[\begin{smallmatrix} N \\ \Ker(\pi_Y) \end{smallmatrix}\right]_{\psi}}\ar[r] & {\left[\begin{smallmatrix}P\\ 0 \end{smallmatrix}\right]\oplus\left[\begin{smallmatrix}M\otimes_BQ\\ Q \end{smallmatrix}\right]_{\rm Id}}\ar[r] & {\left[\begin{smallmatrix}X\\ Y \end{smallmatrix}\right]_{\phi}}\ar[r] & 0}  \eqno(*)$$
in $\mathscr{S}(A, M, B)$. This shows that $\mathscr{S}(A, M, B)$ has enough projective objects.
\vskip5pt
Let $\left[\begin{smallmatrix}X\\ Y \end{smallmatrix}\right]_{\phi}$ be an indecomposable projective object of $\mathscr{S}(A, M, B)$. Then the exact sequence $(*)$ splits and $\left[\begin{smallmatrix}X\\ Y \end{smallmatrix}\right]_{\phi}$ is a direct summand of $\left[\begin{smallmatrix}P\\ 0 \end{smallmatrix}\right]\oplus\left[\begin{smallmatrix}M\otimes_BQ\\ Q \end{smallmatrix}\right]_{\rm Id}$. By Lemma \ref{scat0}, $\mathscr{S}(A, M, B)$ is a Krull-Schimdt category, so $\left[\begin{smallmatrix}X\\ Y \end{smallmatrix}\right]_{\phi}$ is isomorphic to $\left[\begin{smallmatrix}P'\\ 0 \end{smallmatrix}\right]$ or $\left[\begin{smallmatrix}M\otimes_BQ'\\ Q' \end{smallmatrix}\right]_{\rm Id},$ where $P'$ (resp. $Q'$) is an indecomposable projective $A$-module (resp. $B$-module). Thus $\left[\begin{smallmatrix}X\\ Y \end{smallmatrix}\right]_{\phi}$ is a projective $\Lambda$-module.

\vskip5pt

$(2)$ \ It is clear that $\left[\begin{smallmatrix}I\\ 0 \end{smallmatrix}\right]$ and $\left[\begin{smallmatrix}E_J\\ J \end{smallmatrix}\right]_{e}$ are indecomposable objects of $\mathscr{S}(A, M, B)$.
We show that they are injective objects of $\mathscr{S}(A, M, B)$. Put  $\left[\begin{smallmatrix}W\\ V \end{smallmatrix}\right]_{\phi}$ to be $\left[\begin{smallmatrix}I\\ 0 \end{smallmatrix}\right]$ or $\left[\begin{smallmatrix}E_J\\ J \end{smallmatrix}\right]_{e}$. For an exact sequence in $\mathscr{S}(A, M, B)$
$$\xymatrix{0\ar[r] & {\left[\begin{smallmatrix}X_1\\ Y_1 \end{smallmatrix}\right]_{\phi_1}}\ar[r]^{\left[\begin{smallmatrix}f_1\\ g_1 \end{smallmatrix}\right]} & {\left[\begin{smallmatrix}X_2\\ Y_2 \end{smallmatrix}\right]_{\phi_2}}\ar[r]^{\left[\begin{smallmatrix}f_2\\ g_2 \end{smallmatrix}\right]}  & {\left[\begin{smallmatrix}X_3\\ Y_3 \end{smallmatrix}\right]_{\phi_3}}\ar[r] & 0}$$
with $\left[\begin{smallmatrix} \alpha \\ \beta \end{smallmatrix}\right]\in\Hom_{\Lambda}(\left[\begin{smallmatrix}X_1\\ Y_1 \end{smallmatrix}\right]_{\phi_1}, \left[\begin{smallmatrix}W\\ V \end{smallmatrix}\right]_{\phi})$, we need looking for $\left[\begin{smallmatrix} \gamma \\ \delta \end{smallmatrix}\right]\in\Hom_{\Lambda}(\left[\begin{smallmatrix}X_2\\ Y_2 \end{smallmatrix}\right]_{\phi_2}, \left[\begin{smallmatrix}W\\ V \end{smallmatrix}\right]_{\phi})$ such that $\left[\begin{smallmatrix} \alpha \\ \beta \end{smallmatrix}\right]=\left[\begin{smallmatrix} \gamma \\ \delta \end{smallmatrix}\right]\left[\begin{smallmatrix}f_1\\ g_1 \end{smallmatrix}\right]$. Since $_BV$ is an injective module and $g_1: Y_1\to Y_2$ is monic, there is a $B$-map $\delta: Y_2\to V$ such that $\beta=\delta g_1$. Consider the $A$-map $\phi (1\otimes \delta): M\otimes_B Y_2\rightarrow W$.
Since $_AW$ is an injective module and $\phi_2: M\otimes_BY_2\to X_2$ is monic, there is an $A$-map $\gamma': X_2\to W$ such that $\phi(1\otimes{\delta})=\gamma'\phi_2$. Since $\alpha\phi_1=\phi(1\otimes{\beta})$ and $f_1\phi_1=\phi_2(1\otimes{g_1})$, we have $$\alpha\phi_1=\phi(1\otimes{\beta})=\phi(1\otimes{\delta})(1\otimes{g_1})=\gamma'\phi_2(1\otimes{g_1})=\gamma'f_1\phi_1.$$ So  $\alpha-\gamma'f_1=\eta\pi_1$ for some $\eta: \Cok(\phi_1)\to W$. Since $h_1: \Cok(\phi_1)\to\Cok(\phi_2)$ is monic and $W$ is injective, there is an $A$-map $\eta':\Cok(\phi_2)\to W$ such that $\eta=\eta'h_1$. We present the process as the diagram with exact rows and columns:
$$\xymatrix@R=3mm{ & 0\ar[d] &  & 0\ar[d] & 0\ar[d] &  \\
 0\ar[r] & M\otimes_BY_1\ar[rr]^{1\otimes{g_1}}\ar[dd]_{\phi_1}\ar[dr]_{1\otimes{\beta}} &  & M\otimes_BY_2\ar[r]^{1\otimes g_2}\ar[dd]^{\phi_2}\ar@{-->}[dl]^{1\otimes{\delta}} & M\otimes_BY_3\ar[r]\ar[dd]^{\phi_3} & 0 \\
   &  & M\otimes_BV\ar[dd]^<<<<{\phi} &  &  &  \\
 0\ar[r] & X_1\ar[rr]^<<<<<{f_1}\ar[dd]_{\pi_1}\ar[dr]_{\alpha} &  & X_2\ar[r]^{f_2}\ar[dd]^{\pi_2}\ar@{-->}[dl]^{\gamma'} & X_3\ar[r]\ar[dd]^{\pi_3} & 0 \\
   &  & W &  &  &  \\
 0\ar[r] & \Cok(\phi_1)\ar[rr]^{h_1}\ar[d]\ar@{-->}[ur]^{\eta} &  & \Cok(\phi_2)\ar[r]^{h_2}\ar[d]\ar@{-->}[ul]_{\eta'} & \Cok(\phi_3)\ar[r]\ar[d] & 0 \\
   & 0 &  & 0 & 0 &  }$$
Put $\gamma: =\gamma'+\eta'\pi_2\in\Hom_A(X_2, W)$. Then $$\gamma\phi_2=(\gamma'+\eta'\pi_2)\phi_2=\gamma'\phi_2=\phi(1\otimes{\delta})$$ and $$\gamma f_1=(\gamma'+\eta'\pi_2)f_1=\gamma'f_1+\eta'\pi_2f_1=\gamma'f_1+\eta'h_1\pi_1=\gamma'f_1+\eta\pi_1=\alpha.$$ This shows  $\left[\begin{smallmatrix} \gamma \\ \delta \end{smallmatrix}\right]\in\Hom_{\Lambda}(\left[\begin{smallmatrix}X_2\\ Y_2 \end{smallmatrix}\right]_{\phi_2}, \left[\begin{smallmatrix}W\\ V \end{smallmatrix}\right]_{\phi})$ and $\left[\begin{smallmatrix} \alpha \\ \beta \end{smallmatrix}\right]=\left[\begin{smallmatrix} \gamma \\ \delta \end{smallmatrix}\right]\left[\begin{smallmatrix}f_1\\ g_1 \end{smallmatrix}\right]$.

\vskip5pt

Next, we show that $\mathscr{S}(A, M, B)$ has enough injective objects. For $\left[\begin{smallmatrix}X\\ Y \end{smallmatrix}\right]_{\phi}\in\mathscr{S}(A, M, B)$, taking injective envelopes $\iota_Y: Y\to J$ and $\iota_C: \Cok(\phi)\to I$,
 we get exact sequences $0\rightarrow Y\stackrel {\iota_Y} \rightarrow J \stackrel {p_Y} \rightarrow \Cok(\iota_Y)\rightarrow 0$ and
 $0\rightarrow \Cok(\phi)\stackrel {\iota_C} \rightarrow I \stackrel  {p_C} \rightarrow \Cok(\iota_C) \rightarrow  0$. We take an injective envelope of the left $A$-module $M\otimes_BJ$ with inclusion $e: M\otimes_BJ\to E_J$.  Since $\phi: M\otimes_B Y\rightarrow X$ is monic and $E_J$ is an injective module, there is an  $A$-map $\alpha: X\to E_J$ satisfying $\alpha\phi=e(1\otimes\iota_Y)$, and hence we get a commutative diagram with exact rows:
$$\xymatrix@R=0.4cm{0\ar[r] & M\otimes_BY\ar[r]^-{\phi}\ar[d]^{1\otimes\iota_Y} & X\ar[r]^-{\pi}\ar[d]^{\alpha} & \Cok(\phi)\ar[r]\ar@{-->}[d]^{\beta} & 0 \\
0\ar[r] & M\otimes_BJ\ar[r]^-{e} & E_J\ar[r]^-{f} & \Cok(e)\ar[r] & 0}$$
Since $M_B$ is projective,  we have a commutative diagram with exact rows and columns:
$$\xymatrix@R=0.4cm{ & 0\ar[d] & 0\ar[d] & 0\ar[d] & \\
0\ar[r] & M\otimes_BY\ar[r]^{\phi}\ar[d]^{1\otimes\iota_Y} & X\ar[r]^-{\pi}\ar[d]^{\left[\begin{smallmatrix} \alpha \\ \iota_{_C}\pi\end{smallmatrix}\right]} & \Cok(\phi)\ar[r]\ar[d]^{\left[\begin{smallmatrix} \beta \\ \iota_{_C}\end{smallmatrix}\right]} & 0 \\
0\ar[r] & M\otimes_BJ\ar[r]_{\left[\begin{smallmatrix} e \\ 0 \end{smallmatrix}\right]}\ar[d]^{1\otimes p_{_Y}} & E_J\oplus I\ar[r]_-{\left[\begin{smallmatrix} f & 0 \\ 0 & 1\end{smallmatrix}\right]}\ar[d] & \Cok(e)\oplus I\ar[r]\ar[d] & 0\\
0\ar[r] & M\otimes_B\Cok(\iota_Y)\ar[r]^-{\psi}\ar[d] & L\ar[r]\ar[d] & C'\ar[r]\ar[d] & 0\\
 & 0 & 0 & 0 & }$$
(since $\iota_{_C}$ is monic, applying the Snake Lemma to the upper two rows we see that $\psi$ is monic). The left and middle columns give a short exact sequence in $\mathscr{S}(A, M, B)$
$$\xymatrix{0\ar[r] & {\left[\begin{smallmatrix} X\\ Y \end{smallmatrix}\right]_{\phi}}\ar[r] & {\left[\begin{smallmatrix}I\\ 0 \end{smallmatrix}\right]\oplus\left[\begin{smallmatrix}E_J\\ J \end{smallmatrix}\right]_e}\ar[r] & {\left[\begin{smallmatrix} L\\ \Cok(\iota_Y) \end{smallmatrix}\right]_{\psi}}\ar[r] & 0}. \quad \eqno(**)$$
This shows that $\mathscr{S}(A, M, B)$ has enough injective objects.

\vskip5pt

Finally, if $\left[\begin{smallmatrix}X\\ Y \end{smallmatrix}\right]_{\phi}$ is an indecomposable injective object of $\mathscr{S}(A, M, B)$, then $(**)$ splits.
By Lemma \ref{scat0}, $\mathscr{S}(A, M, B)$ is a Krull-Schimdt category. So
$\left[\begin{smallmatrix}X\\ Y \end{smallmatrix}\right]_{\phi}$  is either  of the form $\left[\begin{smallmatrix}I\\ 0 \end{smallmatrix}\right]$ or of the form $\left[\begin{smallmatrix}E_J\\ J \end{smallmatrix}\right]_{e}$.
\end{proof}

\begin{cor} Let $_AM_B$ be a bimodule with $M_B$ projective. Then $\mathscr{S}(A, M, B)$ is a {\rm Frobenius} category $($with the canonical exact structure$)$ if and only if both $A$ and $B$ are selfinjective algebras and $_AM$ is projective.
\label{corollary-frobenius-cat}
\end{cor}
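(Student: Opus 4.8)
The plan is to read the conclusion off from Lemma~\ref{scat0} and Proposition~\ref{enoughprojinjobj}. Since $M_B$ is projective, these two results say that $\mathscr{S}(A,M,B)$, with its canonical exact structure, is a Krull--Schmidt exact category with enough projective and enough injective objects; hence it is a Frobenius category exactly when its class of projective objects equals its class of injective objects, and, by the Krull--Schmidt property, this holds if and only if the two lists of indecomposables produced by Proposition~\ref{enoughprojinjobj} coincide up to isomorphism:
$$\Big\{\ \left[\begin{smallmatrix}P\\0\end{smallmatrix}\right],\ \left[\begin{smallmatrix}M\otimes_BQ\\ Q\end{smallmatrix}\right]_{\rm Id}\ \Big\} \quad\text{and}\quad \Big\{\ \left[\begin{smallmatrix}I\\0\end{smallmatrix}\right],\ \left[\begin{smallmatrix}E_J\\ J\end{smallmatrix}\right]_{e}\ \Big\},$$
with $P$, $Q$ (resp. $I$, $J$) running over the indecomposable projective (resp. injective) left $A$- and $B$-modules. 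A $\Lambda$-isomorphism is an isomorphism in each component, so whether the $B$-component vanishes splits each list into two halves, and the comparison reduces to two assertions: every indecomposable projective $A$-module is injective and conversely; and every $\left[\begin{smallmatrix}M\otimes_BQ\\ Q\end{smallmatrix}\right]_{\rm Id}$ is isomorphic to some $\left[\begin{smallmatrix}E_J\\ J\end{smallmatrix}\right]_{e}$ and conversely.

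For the ``if'' direction I would assume $A$, $B$ selfinjective and $_AM$ projective. The first assertion is then clear. For the second, given an indecomposable injective $B$-module $J$, selfinjectivity of $B$ makes $J$ projective, so $J\in\add({}_BB)$ and hence $M\otimes_BJ\in\add(M\otimes_BB)=\add({}_AM)$; since $_AM$ is projective and $A$ is selfinjective, $M\otimes_BJ$ is an injective $A$-module, whence $E_J=M\otimes_BJ$, $e={\rm Id}$, and $\left[\begin{smallmatrix}E_J\\ J\end{smallmatrix}\right]_{e}=\left[\begin{smallmatrix}M\otimes_BJ\\ J\end{smallmatrix}\right]_{\rm Id}$ lies in the first list. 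Letting $J$ run over the indecomposable injective $B$-modules, which by selfinjectivity are exactly the indecomposable projective ones, establishes both inclusions, so the two lists agree and $\mathscr{S}(A,M,B)$ is Frobenius.

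For the ``only if'' direction I would assume $\mathscr{S}(A,M,B)$ Frobenius. Then each $\left[\begin{smallmatrix}P\\0\end{smallmatrix}\right]$, with $P$ an indecomposable projective $A$-module, is an injective object, hence isomorphic to one of the indecomposables in Proposition~\ref{enoughprojinjobj}(2); comparing $B$-components rules out the type $\left[\begin{smallmatrix}E_J\\ J\end{smallmatrix}\right]_{e}$, so $\left[\begin{smallmatrix}P\\0\end{smallmatrix}\right]\cong\left[\begin{smallmatrix}I\\0\end{smallmatrix}\right]$ and $P\cong I$ is injective; thus $A$ is selfinjective. Next, each $\left[\begin{smallmatrix}M\otimes_BQ\\ Q\end{smallmatrix}\right]_{\rm Id}$, with $Q$ an indecomposable projective $B$-module, is an injective object with nonzero $B$-component, hence $\cong\left[\begin{smallmatrix}E_J\\ J\end{smallmatrix}\right]_{e}$ for some indecomposable injective $B$-module $J$; reading off the components, $Q\cong J$ is injective, so $B$ is selfinjective, and $M\otimes_BQ\cong E_J$ is an injective $A$-module. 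Summing over a complete set $Q_1,\dots,Q_n$ of indecomposable projective $B$-modules gives that $_AM\cong\bigoplus_i M\otimes_BQ_i$ is injective; since $A$ is selfinjective, injective $A$-modules are projective, so $_AM$ is projective, which closes the argument.

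I do not expect a serious obstacle here; the points worth stating carefully are that over a selfinjective Artin algebra the classes of projective and of injective modules coincide (used both to turn $_AM$ injective into $_AM$ projective and in the ``if'' part), and that morphisms in $\Lambda$-mod act componentwise, which is what legitimises splitting the classification lists of Proposition~\ref{enoughprojinjobj} according to the vanishing of the $B$-component.
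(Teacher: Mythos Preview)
Your proof is correct and follows essentially the same approach as the paper's: both reduce the Frobenius condition to matching the indecomposable projectives and injectives supplied by Proposition~\ref{enoughprojinjobj} (the paper phrases this as an equality of $\add$-closures), and then argue both directions componentwise in the same way. The only imprecision is the line ${}_AM\cong\bigoplus_i M\otimes_BQ_i$, since the $Q_i$ occur in ${}_BB$ with multiplicities; but as you have already shown each $M\otimes_BQ_i$ is injective, the conclusion that ${}_AM\cong M\otimes_BB$ is injective (hence projective) is unaffected.
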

\begin{proof} By Proposition \ref{enoughprojinjobj}, $\mathscr{S}(A,M,B)$ is a Frobenius category if and only if
$$\add(\left[\begin{smallmatrix} A \\ 0 \end{smallmatrix}\right]\oplus \left[\begin{smallmatrix} M \\ B \end{smallmatrix}\right]_{\rm Id})=\add(\left[\begin{smallmatrix} \D(A_A)\\ 0 \end{smallmatrix}\right]\oplus \left[\begin{smallmatrix} E_{\D(B_B)}\\ \D(B_B) \end{smallmatrix}\right]_{e}),$$
where $E_{\D(B_B)}$ is an injective envelope of the $A$-module $M\otimes_B\D(B_B)$ with embedding $e: M\otimes_B\D(B_B)\to E_{\D(B_B)}$.
Thus, if $\mathscr{S}(A,M,B)$ is {\rm Frobenius}, then $_AA$ is injective (thus ${\rm add}(\D(A_A)) = {\rm add}(_AA)$), $_AM$ is injective (thus $_AM\in
{\rm add}(\D(A_A)) = {\rm add}(_AA)$, hence $_AM$ is projective), and $_BB$ is injective.  Conversely, if $A$ and $B$ are selfinjective and $_AM$ is projective, then
${\rm add}(_BB) \cong {\rm add}(\D(B_B))$, hence $M\otimes_B\D(B_B)\in {\rm add}(M\otimes_B B) = {\rm add}(_AM)$, so $M\otimes_B\D(B_B)$ is a projective left $A$-module, and hence an injective left $A$-module.
Thus $M\otimes_B\D(B_B) = E_{\D(B_B)}.$ So ${\rm add}(\left[\begin{smallmatrix} E_{\D(B_B)}\\ \D(B_B) \end{smallmatrix}\right]_{e}) = {\rm add}(\left[\begin{smallmatrix} M\otimes_B\D(B_B)\\ \D(B_B) \end{smallmatrix}\right]_{\rm Id}) = \add(\left[\begin{smallmatrix} M\\ B \end{smallmatrix}\right]_{\rm Id})$, thus $\add(\left[\begin{smallmatrix} A \\ 0 \end{smallmatrix}\right]\oplus \left[\begin{smallmatrix} M \\ B \end{smallmatrix}\right]_{\rm Id})=\add(\left[\begin{smallmatrix} \D(A_A)\\ 0 \end{smallmatrix}\right]\oplus \left[\begin{smallmatrix} E_{\D(B_B)}\\ \D(B_B) \end{smallmatrix}\right]_{e}).$ Hence $\mathscr{S}(A,M,B)$ is a Frobenius category.
\end{proof}

\subsection{} To prove Theorem \ref{cotilting} we need some preparations.
A subcategory is a resolving subcategory of $\Lambda$-mod, if it contains all the projective $\Lambda$-modules and is closed under extensions, kernels of epimorphisms and direct summands ([AR]).

\begin{lem} \label{scat1} \ Let $_AM_B$ be a bimodule. Then $\mathscr{S}(A, M, B)$  is a resolving subcategory of $\modcat{\Lambda}$
if and only if $M_B$ is projective. \end{lem}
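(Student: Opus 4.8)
The plan is to prove both directions by exploiting the description of kernels of epimorphisms in $\modcat{\Lambda}$ together with the tensor-exactness criterion for $M_B$. By Lemma \ref{scat0}, $\mathscr{S}(A, M, B)$ is always closed under extensions and contains all projective $\Lambda$-modules, so the only issue is closure under kernels of epimorphisms (direct summands are free, since the monic condition on $\phi$ obviously passes to summands). Thus the statement reduces to: $\mathscr{S}(A, M, B)$ is closed under kernels of epimorphisms if and only if $M_B$ is projective.

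For the ``if'' direction, suppose $M_B$ is projective and take a short exact sequence $0\to \left[\begin{smallmatrix}X_1\\ Y_1\end{smallmatrix}\right]_{\phi_1}\to \left[\begin{smallmatrix}X_2\\ Y_2\end{smallmatrix}\right]_{\phi_2}\to \left[\begin{smallmatrix}X_3\\ Y_3\end{smallmatrix}\right]_{\phi_3}\to 0$ in $\modcat{\Lambda}$ with the middle and right terms in $\mathscr{S}(A, M, B)$. On the second coordinate we get a short exact sequence $0\to Y_1\to Y_2\to Y_3\to 0$ of $B$-modules; tensoring with $M$ and using that $M_B$ is projective (hence flat, so $\operatorname{Tor}_1^B(M, Y_3)=0$) yields an exact sequence $0\to M\otimes_B Y_1\to M\otimes_B Y_2\to M\otimes_B Y_3\to 0$. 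Now I would form the $3\times 3$ commutative diagram with these two short exact sequences as the top two rows and the maps $\phi_1,\phi_2,\phi_3$ as vertical arrows, bottom row $0\to X_1\to X_2\to X_3\to 0$. Since $\phi_2$ and $\phi_3$ are monic, the Snake Lemma applied to this diagram forces $\phi_1$ to be monic as well, so $\left[\begin{smallmatrix}X_1\\ Y_1\end{smallmatrix}\right]_{\phi_1}\in\mathscr{S}(A, M, B)$. This is the same Snake-Lemma bookkeeping already used in the proof of Lemma \ref{scat0}, just run in the other direction.

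For the ``only if'' direction, I would argue contrapositively: assuming $M_B$ is not projective, I construct an epimorphism between objects of $\mathscr{S}(A, M, B)$ whose kernel leaves the category. The natural move is to exploit a syzygy: pick a $B$-module $Y$ with $\operatorname{Tor}_1^B(M, Y)\neq 0$ — for instance take $Y$ with a projective presentation $0\to \Omega\to Q\to Y\to 0$ where $1\otimes(\Omega\hookrightarrow Q): M\otimes_B\Omega\to M\otimes_B Q$ is not monic. Then consider the projective $\Lambda$-module $\left[\begin{smallmatrix}M\otimes_B Q\\ Q\end{smallmatrix}\right]_{\rm Id}$, which lies in $\mathscr{S}(A, M, B)$, and the $\Lambda$-module $\left[\begin{smallmatrix}M\otimes_B Y\\ Y\end{smallmatrix}\right]_{\rm Id}$, which is also in $\mathscr{S}(A, M, B)$ since the identity map is monic. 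The projection $\left[\begin{smallmatrix}M\otimes_B Q\\ Q\end{smallmatrix}\right]_{\rm Id}\to\left[\begin{smallmatrix}M\otimes_B Y\\ Y\end{smallmatrix}\right]_{\rm Id}$ induced by $Q\to Y$ and $1\otimes(Q\to Y)$ is an epimorphism in $\modcat{\Lambda}$, and its kernel has second coordinate $\Omega$ and first coordinate $\ker(1\otimes(Q\to Y)) = \operatorname{Tor}_1^B(M, Y)\neq 0$, with structure map the zero map $M\otimes_B\Omega\to \operatorname{Tor}_1^B(M, Y)$ composed appropriately; more carefully, the kernel is $\left[\begin{smallmatrix}\Ker(M\otimes_B Q\to M\otimes_B Y)\\ \Omega\end{smallmatrix}\right]_{\phi}$ and one computes $\phi$ to be non-monic because $M\otimes_B\Omega$ surjects onto $\Ker(M\otimes_B Q\to M\otimes_B Y)$ with nonzero kernel. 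Hence the kernel is not in $\mathscr{S}(A, M, B)$.

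The main obstacle is getting the structure map $\phi$ of the kernel object precisely right in the ``only if'' direction and verifying cleanly that it fails to be monic; the diagram chase is elementary but one must track carefully how the matrix-module structure of the kernel is computed from the componentwise kernels and the connecting behaviour of $M\otimes_B-$. Everything else is a direct application of the Snake Lemma and the flatness of projective modules, so I expect no serious difficulty in the ``if'' direction or in the reductions at the start.
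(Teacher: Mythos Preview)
Your proposal is correct and follows essentially the same route as the paper: both reduce to closure under kernels of epimorphisms via Lemma~\ref{scat0}, both use the Snake Lemma with flatness of $M_B$ for the ``if'' direction, and both test the converse on the canonical epimorphism $\left[\begin{smallmatrix}M\otimes_B Y_1\\ Y_1\end{smallmatrix}\right]_{\rm Id}\to\left[\begin{smallmatrix}M\otimes_B Y_2\\ Y_2\end{smallmatrix}\right]_{\rm Id}$ induced by a $B$-map $Y_1\to Y_2$ (the paper phrases this directly rather than contrapositively, concluding that $M\otimes_B-$ is exact on all short exact sequences, hence $M_B$ is flat and therefore projective over the Artin algebra $B$). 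One small slip: your sentence ``first coordinate $\ker(1\otimes(Q\to Y)) = \operatorname{Tor}_1^B(M, Y)$'' is wrong as written---that kernel is the \emph{image} of $M\otimes_B\Omega$ in $M\otimes_BQ$, while $\operatorname{Tor}_1^B(M,Y)$ is the kernel of $M\otimes_B\Omega\to M\otimes_BQ$---but you immediately correct yourself in the next clause, so the argument stands.
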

\begin{proof} By Lemma \ref{scat0} it suffices to prove that $\mathscr{S}(A, M, B)$ is closed under kernels of epimorphisms if and only if $M_B$ is projective.
Suppose that $M_B$ is projective. Let $f=\left[\begin{smallmatrix}
f_1 \\ f_2
\end{smallmatrix}\right]:\left[\begin{smallmatrix}
X_1\\ Y_1
\end{smallmatrix}\right]_{\phi_1}\rightarrow \left[\begin{smallmatrix}
X_2\\ Y_2
\end{smallmatrix}\right]_{\phi_2}$ be an epimorphism in $\modcat{\Lambda}$ with both $\left[\begin{smallmatrix}
X_1\\ Y_1
\end{smallmatrix}\right]_{\phi_1}$ and $\left[\begin{smallmatrix}
X_2\\ Y_2
\end{smallmatrix}\right]_{\phi_2}$ in $\mathscr{S}(A, M, B)$. So $X_1\stackrel{f_1}\rightarrow X_2$ and $Y_1\stackrel{f_2}\rightarrow Y_2$ are epic with $\Ker(f_1)\stackrel{i}\hookrightarrow X_1$ and  $\Ker(f_2)\stackrel{j}\hookrightarrow Y_1$. Since $M_B$ is projective,  we get the commutative diagram with exact rows:
$$\xymatrix @R=0.5cm{
0\ar[r] & M\otimes_B\Ker(f_2)\ar[r]^-{1\otimes j} &M\otimes_BY_1\ar[d]_{\phi_1}\ar[r]^{1\otimes f_2} &M\otimes_BY_2\ar[d]^{\phi_2}\ar[r] & 0 \\
0\ar[r] & \Ker(f_1)\ar[r]^{i} & X_1\ar[r]^{f_1} & X_2\ar[r] & 0.}$$
Thus there is a unique $A$-map $\phi: M\otimes_B\Ker(f_2)\rightarrow \Ker(f_1)$ such that $i\phi = \phi_1(1\otimes j)$. It is clearly that $\phi$ is monic, and hence $\Ker(f)=\left[\begin{smallmatrix}
\Ker(f_1)\\ \Ker(f_2)
\end{smallmatrix}\right]_\phi\in \mathscr{S}(A, M, B)$.

\vskip5pt

Conversely, suppose that $\mathscr{S}(A, M, B)$ is closed under kernels of epimorphisms. We claim that $M\otimes_B-$ is an exact functor.
In fact, let $0\rightarrow  K \stackrel {j} \rightarrow Y_1\stackrel f\rightarrow Y_2\rightarrow 0$ be an arbitrary exact sequence of $B$-modules.
Then $\left[\begin{smallmatrix}
1\otimes f \\ f
\end{smallmatrix}\right]:\left[\begin{smallmatrix}
M\otimes_BY_1\\ Y_1
\end{smallmatrix}\right]_{{\rm Id}}\lra \left[\begin{smallmatrix}
M\otimes_BY_2\\ Y_2
\end{smallmatrix}\right]_{{\rm Id}}$ is an epimorphism in $\modcat{\Lambda}$ with $\left[\begin{smallmatrix}
M\otimes_BY_i\\ Y_i
\end{smallmatrix}\right]_{{\rm Id}}\in\mathscr{S}(A, M, B)$ ($i = 1, 2$) and $\Ker \left[\begin{smallmatrix}
1\otimes f \\ f
\end{smallmatrix}\right] =\left[\begin{smallmatrix}
\Ker(1\otimes f)\\ K
\end{smallmatrix}\right]_\phi$, where $M\otimes_B K \stackrel{\phi}\longrightarrow \Ker(1\otimes f)$
is the unique $A$-map such that $\sigma\phi = 1\otimes j$, and $\sigma: \Ker(1\otimes f)\hookrightarrow M\otimes_BY_1$ is the embedding.
Since $\mathscr{S}(A, M, B)$ is closed under kernels of epimorphisms,  $\left[\begin{smallmatrix}
\Ker(1\otimes f)\\ K
\end{smallmatrix}\right]_\phi \in \mathscr{S}(A, M, B)$, i.e., $\phi$ is monic, and thus $1\otimes j$ is monic.
This proves the claim and hence $M_B$ is flat. Since $B$ is an Artin algebra and $M_B$ is finitely generated, it follows that $M_B$ is projective. \end{proof}

\begin{lem} \label{lemma1} {\rm ([XZ], \ Lemma \ 1.2)} \ For $X\in A\mbox{-}{\rm mod}$ and $Y\in B\mbox{-}{\rm mod}$, we have

\vskip5pt

$(1)$ \ $\Ext_{\Lambda}^1(\left[\begin{smallmatrix}0 \\ Y
\end{smallmatrix}\right], \left[\begin{smallmatrix} X \\ 0
\end{smallmatrix}\right])\cong\Hom_A(M\otimes_BY, X)$.

\vskip5pt

$(2)$ \ If $_AI$ is an injective $A$-module, then
$\Ext_{\Lambda}^{i+1}(\left[
\begin{smallmatrix}0\\Y\end{smallmatrix}\right], \left[
\begin{smallmatrix}I\\0\end{smallmatrix}\right])\cong\Ext_B^i(Y, \Hom_A(M,
I))$ for \ $i\geq0$.
\end{lem}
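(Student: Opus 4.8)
The two statements compute $\Ext_\Lambda$ between the ``$A$-side'' modules $\left[\begin{smallmatrix} X \\ 0 \end{smallmatrix}\right]$ and the ``$B$-side'' modules $\left[\begin{smallmatrix} 0 \\ Y \end{smallmatrix}\right]$, and the plan is to transport everything to $B\mbox{-}{\rm mod}$ along the two coordinate embeddings $\left[\begin{smallmatrix} - \\ 0 \end{smallmatrix}\right]$ and $\left[\begin{smallmatrix} 0 \\ - \end{smallmatrix}\right]$, both of which are exact and fully faithful by the triple description of $\Lambda$-modules and $\Lambda$-maps. The first ingredient I would record is the bridge isomorphism
$$\Ext_\Lambda^n\Big(\left[\begin{smallmatrix} 0 \\ Y \end{smallmatrix}\right],\left[\begin{smallmatrix} 0 \\ W \end{smallmatrix}\right]\Big)\ \cong\ \Ext_B^n(Y,W)\qquad(n\ge0),$$
valid for any bimodule $_AM_B$: since $\left[\begin{smallmatrix} 0 \\ - \end{smallmatrix}\right]$ is exact and, by the description of the indecomposable injective $\Lambda$-modules recalled above, carries injective $B$-modules to injective $\Lambda$-modules, applying it to a $B$-injective coresolution of $W$ produces a $\Lambda$-injective coresolution of $\left[\begin{smallmatrix} 0 \\ W \end{smallmatrix}\right]$, after which full faithfulness identifies the Hom-complex with $\Hom_B(Y,-)$ of the original.

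For part $(1)$ I would classify $1$-extensions by hand. A short exact sequence $0\to\left[\begin{smallmatrix} X \\ 0 \end{smallmatrix}\right]\xrightarrow{u}E\xrightarrow{v}\left[\begin{smallmatrix} 0 \\ Y \end{smallmatrix}\right]\to0$ is exact in each coordinate, so $E\cong\left[\begin{smallmatrix} X \\ Y \end{smallmatrix}\right]_{\psi}$ for some $A$-map $\psi: M\otimes_BY\to X$, and we may take $u=\left[\begin{smallmatrix} 1 \\ 0 \end{smallmatrix}\right]$, $v=\left[\begin{smallmatrix} 0 \\ 1 \end{smallmatrix}\right]$. Compatibility of an equivalence of extensions forces both of its components to be identities, and the commuting square defining a $\Lambda$-map then forces $\psi=\psi'$; hence $[E]\mapsto\psi$ is a bijection onto $\Hom_A(M\otimes_BY,X)$, which a routine check (Baer sum $\leftrightarrow$ addition of the $\psi$'s, naturality in $X$ and $Y$) upgrades to the asserted isomorphism. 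One could instead apply $\Hom_\Lambda(-,\left[\begin{smallmatrix} X \\ 0 \end{smallmatrix}\right])$ to the canonical sequence $0\to\left[\begin{smallmatrix} M\otimes_BY \\ 0 \end{smallmatrix}\right]\to\left[\begin{smallmatrix} M\otimes_BY \\ Y \end{smallmatrix}\right]_{\rm Id}\to\left[\begin{smallmatrix} 0 \\ Y \end{smallmatrix}\right]\to0$, using $\Hom_\Lambda(\left[\begin{smallmatrix} 0 \\ Y \end{smallmatrix}\right],\left[\begin{smallmatrix} X \\ 0 \end{smallmatrix}\right])=0=\Hom_\Lambda(\left[\begin{smallmatrix} M\otimes_BY \\ Y \end{smallmatrix}\right]_{\rm Id},\left[\begin{smallmatrix} X \\ 0 \end{smallmatrix}\right])$ and the easily checked $\Ext_\Lambda^1(\left[\begin{smallmatrix} M\otimes_BY \\ Y \end{smallmatrix}\right]_{\rm Id},\left[\begin{smallmatrix} X \\ 0 \end{smallmatrix}\right])=0$.

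For part $(2)$ the decisive point is that, when $_AI$ is injective, the involution map $\varphi: M\otimes_B\Hom_A(M,I)\to I$ fits into a short exact sequence of $\Lambda$-modules
$$0\to\left[\begin{smallmatrix} I \\ 0 \end{smallmatrix}\right]\xrightarrow{\left[\begin{smallmatrix} 1 \\ 0 \end{smallmatrix}\right]}\left[\begin{smallmatrix} I \\ \Hom_A(M,I) \end{smallmatrix}\right]_{\varphi}\xrightarrow{\left[\begin{smallmatrix} 0 \\ 1 \end{smallmatrix}\right]}\left[\begin{smallmatrix} 0 \\ \Hom_A(M,I) \end{smallmatrix}\right]\to0$$
whose middle term is an injective $\Lambda$-module (again by the list recalled above). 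Applying $\Hom_\Lambda(\left[\begin{smallmatrix} 0 \\ Y \end{smallmatrix}\right],-)$ to it and using that $\Ext_\Lambda^n$ into this injective vanishes for $n\ge1$, together with the elementary vanishings $\Hom_\Lambda(\left[\begin{smallmatrix} 0 \\ Y \end{smallmatrix}\right],\left[\begin{smallmatrix} I \\ 0 \end{smallmatrix}\right])=0$ and $\Hom_\Lambda(\left[\begin{smallmatrix} 0 \\ Y \end{smallmatrix}\right],\left[\begin{smallmatrix} I \\ \Hom_A(M,I) \end{smallmatrix}\right]_{\varphi})=0$ (the second because $f_1=0$ forces $\varphi\circ(1\otimes f_2)=0$, hence $f_2=0$), the long exact sequence collapses to a dimension shift
$$\Ext_\Lambda^{i+1}\Big(\left[\begin{smallmatrix} 0 \\ Y \end{smallmatrix}\right],\left[\begin{smallmatrix} I \\ 0 \end{smallmatrix}\right]\Big)\ \cong\ \Ext_\Lambda^{i}\Big(\left[\begin{smallmatrix} 0 \\ Y \end{smallmatrix}\right],\left[\begin{smallmatrix} 0 \\ \Hom_A(M,I) \end{smallmatrix}\right]\Big)\qquad(i\ge0),$$
and composing with the bridge isomorphism for $W=\Hom_A(M,I)$ yields $\Ext_\Lambda^{i+1}(\left[\begin{smallmatrix} 0 \\ Y \end{smallmatrix}\right],\left[\begin{smallmatrix} I \\ 0 \end{smallmatrix}\right])\cong\Ext_B^i(Y,\Hom_A(M,I))$. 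The $i=0$ instance is, as it must be, part $(1)$ followed by the tensor--Hom adjunction $\Hom_A(M\otimes_BY,I)\cong\Hom_B(Y,\Hom_A(M,I))$.

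The step I expect to be the main obstacle is the non-homological one: producing and justifying the ``injective'' short exact sequence above — checking that $\left[\begin{smallmatrix} 1 \\ 0 \end{smallmatrix}\right]$ and $\left[\begin{smallmatrix} 0 \\ 1 \end{smallmatrix}\right]$ are $\Lambda$-maps, that the sequence is exact, and that $\left[\begin{smallmatrix} I \\ \Hom_A(M,I) \end{smallmatrix}\right]_{\varphi}$ is $\Lambda$-injective — plus the low-degree bookkeeping ensuring the dimension shift starts correctly at $i=1$ and agrees with the $i=0$ value. Everything else is standard long-exact-sequence manipulation; in particular no projectivity of $_AM$ or of $M_B$ enters anywhere, part $(2)$ using only that $_AI$ is injective.
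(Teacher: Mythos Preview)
Your argument is correct. Part~(2) follows the paper's route exactly: the short exact sequence with injective middle $\left[\begin{smallmatrix} I \\ \Hom_A(M,I) \end{smallmatrix}\right]_\varphi$, dimension shift, and the identification $\Ext^i_\Lambda(\left[\begin{smallmatrix} 0 \\ Y \end{smallmatrix}\right],\left[\begin{smallmatrix} 0 \\ W \end{smallmatrix}\right])\cong\Ext^i_B(Y,W)$; the paper separates $i=0$ by citing part~(1) plus tensor--Hom adjunction, whereas you fold it into the shift via the extra vanishing $\Hom_\Lambda(\left[\begin{smallmatrix} 0 \\ Y \end{smallmatrix}\right],\left[\begin{smallmatrix} I \\ \Hom_A(M,I) \end{smallmatrix}\right]_\varphi)=0$ --- a cosmetic difference.

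For part~(1) the approaches genuinely differ. The paper takes a $B$-projective cover $Q\twoheadrightarrow Y$ with kernel $K$, forms $0\to\left[\begin{smallmatrix} M\otimes_BQ \\ K \end{smallmatrix}\right]_{1\otimes i}\to\left[\begin{smallmatrix} M\otimes_BQ \\ Q \end{smallmatrix}\right]_{\rm Id}\to\left[\begin{smallmatrix} 0 \\ Y \end{smallmatrix}\right]\to0$ with \emph{projective} middle, and uses the vanishing of $\Hom_\Lambda$ against that middle to read off $\Ext^1$ as $\Hom_\Lambda(\left[\begin{smallmatrix} M\otimes_BQ \\ K \end{smallmatrix}\right]_{1\otimes i},\left[\begin{smallmatrix} X \\ 0 \end{smallmatrix}\right])\cong\Hom_A(M\otimes_BY,X)$. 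Your direct classification of $1$-extensions is more elementary, avoids any resolution, and makes the bijection $[E]\leftrightarrow\psi$ visible at the level of modules; the paper's approach instead yields the isomorphism already packaged as a Hom of $A$-modules. Your alternative via $0\to\left[\begin{smallmatrix} M\otimes_BY \\ 0 \end{smallmatrix}\right]\to\left[\begin{smallmatrix} M\otimes_BY \\ Y \end{smallmatrix}\right]_{\rm Id}\to\left[\begin{smallmatrix} 0 \\ Y \end{smallmatrix}\right]\to0$ also works, but note the middle term is not projective for arbitrary $Y$, so the ``easily checked'' vanishing $\Ext^1_\Lambda(\left[\begin{smallmatrix} M\otimes_BY \\ Y \end{smallmatrix}\right]_{\rm Id},\left[\begin{smallmatrix} X \\ 0 \end{smallmatrix}\right])=0$ needs its own short direct argument (any such extension automatically splits in the $A$-coordinate via the structure map) --- essentially your by-hand method again.
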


\begin{proof} For the convenience we include a short justification.

\vskip5pt

$(1)$ \ Let $0 \rightarrow K \stackrel i\rightarrow Q \stackrel p\rightarrow Y\rightarrow 0$ be an exact sequence with $Q$ a  projective left $B$-module.
Then $0
\rightarrow \left[\begin{smallmatrix} M\otimes Q
\\ K \end{smallmatrix}\right]_{1\otimes i}\overset{\left[\begin{smallmatrix}1
\\ i \end{smallmatrix}\right]}
\longrightarrow\left[\begin{smallmatrix} M\otimes Q\\
Q \end{smallmatrix}\right]_{\rm Id}\overset{\left[\begin{smallmatrix} 0\\ p
\end{smallmatrix}\right]} \longrightarrow\left[\begin{smallmatrix} 0\\ Y
\end{smallmatrix}\right]\rightarrow0$ is an exact sequence with $\left[\begin{smallmatrix} M\otimes Q\\
Q \end{smallmatrix}\right]_{\rm Id}$ a  projective left $\Lambda$-module. Applying $\Hom_\Lambda(-,\left[\begin{smallmatrix} X \\ 0
\end{smallmatrix}\right])$, since $\Hom_\Lambda(\left[\begin{smallmatrix} M\otimes Q\\
Q \end{smallmatrix}\right]_{\rm Id}, \left[\begin{smallmatrix} X \\ 0
\end{smallmatrix}\right])=0$,  we see \begin{align*}\Ext_{\Lambda}^1(\left[\begin{smallmatrix}0 \\ Y
\end{smallmatrix}\right], \left[\begin{smallmatrix} X \\ 0
\end{smallmatrix}\right])&\cong\Hom_A(\left[\begin{smallmatrix} M\otimes Q
\\ K \end{smallmatrix}\right]_{1\otimes i}, \left[\begin{smallmatrix} X \\ 0
\end{smallmatrix}\right]) = \{f\in \Hom_A(M\otimes_BQ, X) \ | \ f(1\otimes i) = 0\}\\ & \cong\Hom_A(M\otimes_BY, X).\end{align*}

\vskip5pt

$(2)$ \ If $i=0$ then the assertion follows from $(1)$ and the
Tensor-$\Hom$ adjunction isomorphism. Let $i\ge 1$. Using the abbreviation $(M, I) = \Hom_A(M, I)$, by the exact
sequence $0\rightarrow\left[\begin{smallmatrix} I\\ 0
\end{smallmatrix}\right] \rightarrow\left[\begin{smallmatrix} I\\ (M, I)
\end{smallmatrix}\right]_{\varphi}\rightarrow\left[\begin{smallmatrix}
0\\ (M, I)\end{smallmatrix}\right]\rightarrow0$, we get the exact sequence
$$\Ext_{\Lambda}^{i}(\left[\begin{smallmatrix}0\\Y\end{smallmatrix}\right],
\left[\begin{smallmatrix} I\\ (M,
I)\end{smallmatrix}\right]_{\varphi})
\longrightarrow\Ext_{\Lambda}^{i}(\left[\begin{smallmatrix}0\\Y\end{smallmatrix}\right],
\left[\begin{smallmatrix} 0\\ (M, I)\end{smallmatrix}\right])
\longrightarrow\Ext_{\Lambda}^{i+1}(\left[\begin{smallmatrix}0\\Y\end{smallmatrix}\right],
\left[\begin{smallmatrix}I\\0\end{smallmatrix}\right])
\longrightarrow\Ext_{\Lambda}^{i+1}(\left[\begin{smallmatrix}0\\Y\end{smallmatrix}\right],
\left[\begin{smallmatrix} I\\ (M,
I)\end{smallmatrix}\right]_{\varphi}).$$ Since
$\left[\begin{smallmatrix} I\\ (M, I)
\end{smallmatrix}\right]_{\varphi}$ is an injective
$\Lambda$-module, we have
$\Ext_{\Lambda}^{i+1}(\left[\begin{smallmatrix}0\\Y\end{smallmatrix}\right],
\left[\begin{smallmatrix}I\\0\end{smallmatrix}\right])\cong\Ext_{\Lambda}^{i}
(\left[\begin{smallmatrix}0\\Y\end{smallmatrix}\right],
\left(\begin{smallmatrix} 0\\ (M, I)\end{smallmatrix}\right])$, and then the assertion follows from $\Ext_{\Lambda}^{i}
(\left[\begin{smallmatrix}0\\Y\end{smallmatrix}\right],
\left[\begin{smallmatrix} 0\\ (M, I)\end{smallmatrix}\right])
\cong\Ext_B^i(Y, \Hom_A(M, I)),$ by using the injective resolution of $\Hom_A(M, I)$.
\end{proof}

\begin{lem} \label{scat2} If $M_B$ projective, then $\mathscr{S}(A, M, B) = \ ^\perp\left[\begin{smallmatrix}\D(A_A)\\ 0 \end{smallmatrix}\right]$.
\end{lem}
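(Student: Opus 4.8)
Write $N:=\left[\begin{smallmatrix}\D(A_A)\\ 0\end{smallmatrix}\right]$. The plan is to compute $\Ext^m_\Lambda(\left[\begin{smallmatrix}X\\ Y\end{smallmatrix}\right]_\phi,N)$ for an arbitrary $\Lambda$-module $\left[\begin{smallmatrix}X\\ Y\end{smallmatrix}\right]_\phi$ and every $m\ge1$, and then to read off that these groups all vanish exactly when $\phi$ is monic. I would start from the canonical short exact sequence
$$0\longrightarrow\left[\begin{smallmatrix}X\\ 0\end{smallmatrix}\right]\longrightarrow\left[\begin{smallmatrix}X\\ Y\end{smallmatrix}\right]_\phi\longrightarrow\left[\begin{smallmatrix}0\\ Y\end{smallmatrix}\right]\longrightarrow0$$
of $\Lambda$-modules, apply $\Hom_\Lambda(-,N)$, and feed in two elementary facts: the functor $X'\mapsto\left[\begin{smallmatrix}X'\\ 0\end{smallmatrix}\right]$ from $A$-mod to $\Lambda$-mod is exact and sends projectives to projectives, so that $\Ext^m_\Lambda(\left[\begin{smallmatrix}X'\\ 0\end{smallmatrix}\right],\left[\begin{smallmatrix}Z\\ W\end{smallmatrix}\right]_\psi)\cong\Ext^m_A(X',Z)$ for all $m\ge0$; and $\Hom_\Lambda(\left[\begin{smallmatrix}0\\ Y\end{smallmatrix}\right],N)=0$. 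Since $\D(A_A)$ is an injective $A$-module, the first fact gives $\Ext^m_\Lambda(\left[\begin{smallmatrix}X\\ 0\end{smallmatrix}\right],N)=0$ for $m\ge1$ and $\Hom_\Lambda(\left[\begin{smallmatrix}X\\ 0\end{smallmatrix}\right],N)\cong\D(X)$, so the long exact sequence collapses to isomorphisms $\Ext^m_\Lambda(\left[\begin{smallmatrix}X\\ Y\end{smallmatrix}\right]_\phi,N)\cong\Ext^m_\Lambda(\left[\begin{smallmatrix}0\\ Y\end{smallmatrix}\right],N)$ for $m\ge2$, and for $m=1$ to the four-term exact sequence
$$0\longrightarrow\Hom_\Lambda(\left[\begin{smallmatrix}X\\ Y\end{smallmatrix}\right]_\phi,N)\longrightarrow\D(X)\stackrel{\partial}{\longrightarrow}\Ext^1_\Lambda(\left[\begin{smallmatrix}0\\ Y\end{smallmatrix}\right],N)\longrightarrow\Ext^1_\Lambda(\left[\begin{smallmatrix}X\\ Y\end{smallmatrix}\right]_\phi,N)\longrightarrow0.$$

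Next I would apply Lemma \ref{lemma1}(2) with the injective $A$-module $\D(A_A)$, which gives $\Ext^m_\Lambda(\left[\begin{smallmatrix}0\\ Y\end{smallmatrix}\right],N)\cong\Ext^{m-1}_B(Y,\Hom_A(M,\D(A_A)))$ for all $m\ge1$. Because $M_B$ is projective we have $\Hom_A(M,\D(A_A))\cong\D(M_B)\in\add(\D(B_B))$, an injective left $B$-module; hence $\Ext^{m-1}_B(Y,\Hom_A(M,\D(A_A)))=0$ for $m\ge2$, and combining with the previous paragraph, $\Ext^m_\Lambda(\left[\begin{smallmatrix}X\\ Y\end{smallmatrix}\right]_\phi,N)=0$ for every $m\ge2$, unconditionally on $\phi$. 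For $m=1$ the same lemma (via the tensor--hom adjunction) identifies $\Ext^1_\Lambda(\left[\begin{smallmatrix}0\\ Y\end{smallmatrix}\right],N)$ with $\Hom_A(M\otimes_BY,\D(A_A))=\D(M\otimes_BY)$, so the four-term sequence gives $\Ext^1_\Lambda(\left[\begin{smallmatrix}X\\ Y\end{smallmatrix}\right]_\phi,N)\cong\Cok(\partial)$ for a certain map $\partial\colon\D(X)\to\D(M\otimes_BY)$.

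The crux --- and the step I expect to need the most care --- is to identify $\partial$ with $\D(\phi)=\Hom_A(\phi,\D(A_A))$, i.e. with $f\mapsto f\circ\phi$. For this I would trace the connecting morphism through the isomorphism of Lemma \ref{lemma1}(1): using a projective presentation $0\to K\to Q\stackrel{p}{\to}Y\to0$ of $Y$ and the induced projective presentation of $\left[\begin{smallmatrix}0\\ Y\end{smallmatrix}\right]$, the lift of the projection $\left[\begin{smallmatrix}M\otimes_BQ\\ Q\end{smallmatrix}\right]_{\rm Id}\twoheadrightarrow\left[\begin{smallmatrix}0\\ Y\end{smallmatrix}\right]$ along $\left[\begin{smallmatrix}X\\ Y\end{smallmatrix}\right]_\phi\twoheadrightarrow\left[\begin{smallmatrix}0\\ Y\end{smallmatrix}\right]$ is forced by the commuting square to be $\left[\begin{smallmatrix}\phi(1\otimes p)\\ p\end{smallmatrix}\right]$, so the class of the sequence $0\to\left[\begin{smallmatrix}X\\ 0\end{smallmatrix}\right]\to\left[\begin{smallmatrix}X\\ Y\end{smallmatrix}\right]_\phi\to\left[\begin{smallmatrix}0\\ Y\end{smallmatrix}\right]\to0$ in $\Ext^1_\Lambda(\left[\begin{smallmatrix}0\\ Y\end{smallmatrix}\right],\left[\begin{smallmatrix}X\\ 0\end{smallmatrix}\right])\cong\Hom_A(M\otimes_BY,X)$ corresponds to $\phi$ itself; since $\partial$ is push-out along morphisms $X\to\D(A_A)$, naturality in the coefficients turns it into $f\mapsto f\phi$. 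Granting this, exactness of the duality $\D$ yields $\Cok(\D\phi)\cong\D(\Ker\phi)$, so finally
$$\Ext^1_\Lambda(\left[\begin{smallmatrix}X\\ Y\end{smallmatrix}\right]_\phi,N)\cong\D(\Ker\phi),\qquad\Ext^m_\Lambda(\left[\begin{smallmatrix}X\\ Y\end{smallmatrix}\right]_\phi,N)=0\ \ (m\ge2).$$
Hence $\left[\begin{smallmatrix}X\\ Y\end{smallmatrix}\right]_\phi\in{}^\perp N$ if and only if $\Ker\phi=0$, i.e. iff $\phi$ is monic, i.e. iff $\left[\begin{smallmatrix}X\\ Y\end{smallmatrix}\right]_\phi\in\mathscr{S}(A, M, B)$, which is the assertion. (The inclusion $\mathscr{S}(A, M, B)\subseteq{}^\perp N$ can alternatively be seen at once from the fact that $N$ is an injective object of the exact category $\mathscr{S}(A, M, B)$, by Proposition \ref{enoughprojinjobj}, together with the vanishing of $\Ext^m_\Lambda(-,N)$ for $m\ge2$ proved above; but the reverse inclusion seems to need the computation of $\partial$.)
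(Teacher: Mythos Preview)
Your proof is correct and follows essentially the same approach as the paper: both start from the canonical short exact sequence $0\to\left[\begin{smallmatrix}X\\0\end{smallmatrix}\right]\to\left[\begin{smallmatrix}X\\Y\end{smallmatrix}\right]_\phi\to\left[\begin{smallmatrix}0\\Y\end{smallmatrix}\right]\to0$, apply $\Hom_\Lambda(-,N)$, invoke Lemma~\ref{lemma1} together with the injectivity of $\Hom_A(M,\D(A_A))\cong\D M$ (from $M_B$ projective) to kill the higher terms, and reduce everything to identifying the connecting map with $\phi^*=\Hom_A(\phi,\D(A_A))$. The paper simply records this identification as a commutative square without further comment, whereas you explicitly trace $\partial$ through the projective presentation of $\left[\begin{smallmatrix}0\\Y\end{smallmatrix}\right]$ and push the computation one step further to $\Ext^1\cong\D(\Ker\phi)$; this extra care is exactly what the step deserves, but the underlying argument is the same.
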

\begin{proof} We need to prove that
 $\left[\begin{smallmatrix}X\\ Y \end{smallmatrix}\right]_{\phi}\in\; ^\perp\left[\begin{smallmatrix}\D(A_A)\\ 0 \end{smallmatrix}\right]$ if and only if $\left[\begin{smallmatrix}X\\ Y \end{smallmatrix}\right]_{\phi}\in \mathscr{S}(A, M, B)$. Since $M_B$ is projective, $\Hom_A(M, \D(A_A))\cong \D M$ is an injective left $B$-module, and hence $\Ext_B^i(Y, \Hom_A(M, \D(A_A)))=0$ for all $i\geq1$. Applying $\Hom_{\Lambda}(-,
\left[\begin{smallmatrix}\D(A_A)\\0\end{smallmatrix}\right])$ to the exact
sequence $0 \rightarrow \left[\begin{smallmatrix} X
\\ 0 \end{smallmatrix}\right]
\rightarrow\left[\begin{smallmatrix} X\\
Y \end{smallmatrix}\right]_{\phi}
\rightarrow\left[\begin{smallmatrix} 0\\ Y
\end{smallmatrix}\right]\rightarrow0$, by Lemma \ref{lemma1} we get the
commutative diagram with the upper row being exact
$$\xymatrix @R=0.5cm @C=0.8cm {{\Hom_\Lambda(\left[\begin{smallmatrix}X\\0\end{smallmatrix}\right],
\left[\begin{smallmatrix}\D(A_A)\\0\end{smallmatrix}\right])}
\ar[r]\ar[d]^{\cong}&{\Ext_{\Lambda}^1(\left[\begin{smallmatrix}0\\Y\end{smallmatrix}\right],
\left[\begin{smallmatrix}\D(A_A)\\0\end{smallmatrix}\right])}
\ar[r]\ar[d]^{\cong}&{\Ext_{\Lambda}^1(\left[\begin{smallmatrix}X\\Y\end{smallmatrix}\right]_{\phi},
\left[\begin{smallmatrix}\D(A_A)\\0\end{smallmatrix}\right])}\\
{\Hom_A(X, \D(A_A))}\ar[r]^-{\phi^*}& {\Hom_A(M\otimes_BY, \D(A_A)),}}$$ and
the following exact sequence for $i\geq1$
$$\xymatrix @R=0.5cm @C=0.3cm
{{\Ext^i_\Lambda(\left[\begin{smallmatrix} X\\Y\end{smallmatrix}\right]_{\phi}, \left[\begin{smallmatrix}\D A\\0\end{smallmatrix}\right])} \ar[r]
& {\Ext_\Lambda^i(\left[\begin{smallmatrix}X\\0\end{smallmatrix}\right], \left[\begin{smallmatrix}\D A\\0\end{smallmatrix}\right])} \ar[r]\ar[d]^{\cong}
& {\Ext_\Lambda^{i+1}(\left[\begin{smallmatrix}0\\Y\end{smallmatrix}\right], \left[\begin{smallmatrix}\D A\\0\end{smallmatrix}\right])}\ar[r]\ar[d]^-{\cong}
& {\Ext_\Lambda^{i+1}(\left[\begin{smallmatrix}X\\Y\end{smallmatrix}\right]_{\phi}, \left[\begin{smallmatrix} \D A\\0\end{smallmatrix}\right])} \ar[r] & \cdots
\\ & 0= \Ext_A^i(X, \D A)  & {\Ext_B^i(Y, \Hom(M, \D A)) = 0.}}$$
So $\left[\begin{smallmatrix}X\\Y\end{smallmatrix}\right]_{\phi} \in \ ^\perp\left[\begin{smallmatrix}\D(A_A)\\0\end{smallmatrix}\right]$ if and
only if $\phi^*: \Hom_A(X, \D(A_A))\rightarrow\Hom_A(M\otimes_BY,
\D(A_A))$ is an epimorphism, and if and only if $\phi: M\otimes_B Y \rightarrow X$ is monic.
\end{proof}

\subsection{Proof of Theorem \ref{cotilting}} $(1)$ \ By Lemma \ref{scat1} we have the implications ${\rm (i)} \Longleftrightarrow {\rm (ii)}$.

\vskip5pt

${\rm (iii)} \Longrightarrow {\rm (ii)}:$ \ Since $\mathscr{S}(A, M, B)={}^\perp T$, it is clear that $\mathscr{S}(A, M, B)$ is a resolving subcategory of
$\Lambda$-mod.

\vskip5pt

${\rm (i)} \Longrightarrow {\rm (iii)}:$ \ Since $M_B$ is projective, $\Hom_A(M, \D(A_A))\cong \D M$ is an injective left $B$-module, and hence $\left[\begin{smallmatrix} 0 \\ \Hom_{A}(M, \D(A_A)) \end{smallmatrix}\right]$ is an injective left $\Lambda$-module. By the exact sequence in $\Lambda$-mod
$$\xymatrix{0\ar[r] & {\left[\begin{smallmatrix}\D(A_A)\\ 0 \end{smallmatrix}\right]}\ar[r] & {\left[\begin{smallmatrix}\D(A_A)\\ \Hom_{A}(M, \D(A_A)) \end{smallmatrix}\right]_\varphi}\ar[r] & {\left[\begin{smallmatrix} 0 \\ \Hom_{A}(M, \D(A_A)) \end{smallmatrix}\right]}\ar[r] & 0 }$$
we see that $\injd_{\Lambda}\left[\begin{smallmatrix}\D(A_A)\\ 0 \end{smallmatrix}\right]\le 1$.

\vskip5pt

Let $\alpha: \D(B_B)\rightarrow \Hom_{A}(M, E_{\D(B_B)})$ be  the image of $e\in \Hom_A(M\otimes_B\D(B_B), E_{\D(B_B)})$ under the adjunction isomorphism
$$\Hom_A(M\otimes_B\D(B_B), E_{\D(B_B)})\cong \Hom_B(\D(B_B), \Hom_{A}(M, E_{\D(B_B)})).$$
By the naturalness of the adjunction isomorphisms we have the commutative diagram
$$\xymatrix{\Hom_A(M\otimes_B\Hom_A(M, E_{\D(B_B)}), E_{\D(B_B)})\ar[r]^-{\cong}
\ar[d]^-{({\rm Id}\otimes_B\alpha, E_{\D(B_B)})} &
\Hom_B(\Hom_A(M, E_{\D(B_B)}), \Hom_A(M, E_{\D(B_B)}))\ar[d]^-{(\alpha, \ (M, E_{\D(B_B)}))} \\
\Hom_A(M\otimes_B\D(B_B), E_{\D(B_B)})\ar[r]^-{\cong} & \Hom_B(\D(B_B), \Hom_A(M, E_{\D(B_B)})).}$$
Let $\varphi: \Hom_A(M\otimes_B \Hom_A(M, E_{\D (B_B)}), \D (B_B))$ be the involution map. By the above commutative diagram
we can get $\varphi (1\otimes_B \alpha) = e$. So we get a $\Lambda$-map $\left[\begin{smallmatrix} 1 \\ \alpha \end{smallmatrix}\right]: \left[\begin{smallmatrix}E_{\D(B_B)}\\ \D(B_B) \end{smallmatrix}\right]_{e}\longrightarrow\left[\begin{smallmatrix}E_{\D(B_B)}\\ \Hom_A(M, E_{\D(B_B)}) \end{smallmatrix}\right]_\varphi$, and we have the exact sequence in $\Lambda$-mod
$$\xymatrix@C=1.2cm{0\ar[r] & {\left[\begin{smallmatrix}E_{\D(B_B)}\\ \D(B_B) \end{smallmatrix}\right]_{e}}\ar[r]^-{\left(\begin{smallmatrix} {\left[\begin{smallmatrix} 1 \\ \alpha \end{smallmatrix}\right]} \\ {\left[\begin{smallmatrix} 0 \\ 1 \end{smallmatrix}\right]} \end{smallmatrix}\right)} & {\left[\begin{smallmatrix}E_{\D(B_B)}\\ \Hom_A(M, E_{\D(B_B)}) \end{smallmatrix}\right]_\varphi\oplus\left[\begin{smallmatrix} 0 \\ \D(B_B) \end{smallmatrix}\right]}\ar[r]^-{(\left[\begin{smallmatrix}0\\ 1 \end{smallmatrix}\right], \left[\begin{smallmatrix}0\\ -\alpha \end{smallmatrix}\right])} & {\left[\begin{smallmatrix} 0 \\ \Hom_{A}(M, E_{\D(B_B)}) \end{smallmatrix}\right]}\ar[r] & 0.}$$
Since $E_{\D(B_B)}$ is an injective left $A$-module, it follows that $\Hom_{A}(M, E_{\D(B_B)})\in {\rm add}(\Hom_A(M, \D(A_A)))$, so $\Hom_{A}(M, E_{\D(B_B)})$ is an injective left $B$-module, and hence $\left[\begin{smallmatrix} 0 \\ \Hom_{A}(M, E_{\D(B_B)}) \end{smallmatrix}\right]$ is an injective $\Lambda$-module. This shows $\injd_{\Lambda}\left[\begin{smallmatrix}E_{\D(B_B)}\\ \D(B_B) \end{smallmatrix}\right]_{e}\le 1$.
Thus  $\injd_{\Lambda}T = \injd_{\Lambda}(\left[\begin{smallmatrix}\D(A_A)\\ 0 \end{smallmatrix}\right]\oplus\left[\begin{smallmatrix}E_{\D(B_B)}\\ \D(B_B) \end{smallmatrix}\right]_{e})\le 1$.

\vskip5pt

By Proposition \ref{enoughprojinjobj}$(2)$, $T$ is an injective object of $\mathscr{S}(A, M, B)$, so $\Ext_{\Lambda}^1(T, T)=0$. It is clear that the number of the pairwise non-isomorphic indecomposable direct summands of $T$ is the number of the simple $\Lambda$-modules. So $_\Lambda T$ is a cotilting $\Lambda$-module.

\vskip5pt

Since $T$ is an injective object of $\mathscr{S}(A, M, B)$ and $\injd_{\Lambda}T\le 1$,  we have $\mathscr{S}(A, M, B)\subseteq{}^\perp T$.
By Lemma \ref{scat2}, $\mathscr{S}(A, M, B)= \ ^\perp\left[\begin{smallmatrix}D(A_A)\\ 0 \end{smallmatrix}\right]\supseteq \ ^\perp T$. Thus $\mathscr{S}(A, M, B)={}^\perp T$.

 \vskip5pt

If there is another cotilting $\Lambda$-module $L$ such that
$\mathscr{S}(A, M, B)= {}^\perp L$. Then $T\oplus L$
is also a cotilting $\Lambda$-module. By comparing the number of pairwise non-isomorphic indecomposable direct summands of $T\oplus L$ and $T$, we see the uniqueness of $T$, up to the multiplicities of indecomposable direc summands.

\vskip5pt

$(2)$ \ The following construction is from C. Ringel and M. Schmidmeier [RS2]. Let $\left[\begin{smallmatrix}
X\\ Y
\end{smallmatrix}\right]_{\phi}\in\Lambda$-mod. Define ${\rm Mimo}(\phi)$ to be the $\Lambda$-module $\left[\begin{smallmatrix}
X\oplus\ik(\phi)\\ Y
\end{smallmatrix}\right]_{\left[\begin{smallmatrix}
\phi\\ e
\end{smallmatrix}\right]}$, where $e: M\otimes_B Y\lra\ik(\phi)$ is an extension of the injective envelope ${\rm Ker}(\phi)\hookrightarrow\ik(\phi)$. Then it is clear that ${\rm Mimo}(\phi)$ is well-defined (i.e., independent of the choice of $e$) and it is in $\mathscr{S}(A, M, B)$. For any  $\left[\begin{smallmatrix}
X\\ Y
\end{smallmatrix}\right]_{\phi}\in\Lambda$-mod, by the similar argument as in [RS2, Prop. 2.4], one can see that $\left[\begin{smallmatrix} (1, 0)\\ 1 \end{smallmatrix}\right]: {\rm Mimo}(\phi)=\left[\begin{smallmatrix} X\oplus\ik(\phi)\\ Y \end{smallmatrix}\right]_{\left[\begin{smallmatrix} \phi\\ e \end{smallmatrix}\right]}\lra\left[\begin{smallmatrix}X\\ Y \end{smallmatrix}\right]_{\phi}$ is a minimal right $\mathscr{S}(A, M, B)$-approximation of $\left[\begin{smallmatrix}
X\\ Y
\end{smallmatrix}\right]_{\phi}$.  Thus $\mathscr{S}(A, M, B)$ is a contravariantly finite subcategory of $\Lambda$-mod.

\vskip5pt

By [KS, Corol. 0.3], a resolving contravariantly
finite subcategory of $\Lambda$-mod is functorially finite, and by [AS, Thm. 2.4], an extension-closed
functorially finite subcategory of $\Lambda$-mod has Auslander-Reiten
sequences. Thus, if $M_B$ is projective, then by $(1)$,
$\mathscr{S}(A, M, B)$ is functorially finite in
$\Lambda$-mod, and hence $\mathscr{S}(A, M, B)$ has Auslander-Reiten sequences.
$\hfill \square$

\subsection{} Recall that each right $\Lambda$-module is identified with a triple $(X, Y)_{\phi}$, where $X\in {\rm mod}A$, $Y\in {\rm mod}B$, and $\phi: X\otimes_A M\rightarrow Y$ is a right $B$-map; and a right $\Lambda$-map is identified with a pair $(f_1, f_2): (X_1, Y_1)_{\phi_1}\rightarrow (X_2, Y_2)_{\phi_2}$, where $f_1: X_1\rightarrow X_2$ is an $A$-map and $f_2: Y_1\rightarrow Y_2$ a $B$-map, such that $f_2\phi_1 = \phi_2(f_1\otimes 1)$.
The injective right $\Lambda$-modules are exactly
$(I, 0)$ and $(\Hom_B(M, J), J)_\varphi$, where $I$ (resp. $J$) runs over the injective right $A$-modules (resp. $B$-modules), and $\varphi:
{\rm Hom}_B(M, J)\otimes_AM  \rightarrow J$ is the involution map given
by $\varphi (f\otimes m)=f(m).$ See [ARS, p.73].

\vskip5pt

All the results obtained so far have the right module versions. We only write down what is needed later.
The right module version of $\mathscr S(A, M, B)$ is $\mathscr S(A, M, B)_r$, which is the subcategory of mod$\Lambda$ consisting of the triple $(U, V)_\phi$, where $X\in {\rm mod}A$, $Y\in {\rm mod}B$, and $\phi: X\otimes_A M \rightarrow Y$ is a monic right $B$-map. Then $\mathscr S(A, M, B)_r$ is a resolving subcategory of $\Lambda$-{\rm mod} if and only if $_AM$ projective, and if and only if $\mathscr S(A, M, B)_r = \ ^\perp(\D(_AA),0)$. The following result is only a part of the right module version of Theorem \ref{cotilting}, which is what we will need later.

\vskip5pt

{\bf A right module version of Theorem \ref{cotilting}.} {\it Let $_AM_B$ be a bimodule with $_AM$ projective. Then $U_\Lambda :=(\D(_AA), E_{\D(_AA)})_{e}\oplus (0, \D(_BB))$ is a unique cotilting right $\Lambda$-module, up to multiplicities of indecomposable direct summands, such that $\mathscr{S}(A, M, B)_r ={}^\perp (U_\Lambda)$, where  $E_{\D(_AA)}$ is an injective envelope of  $\D(_AA)\otimes_AM$ with embedding $e: \D(_AA)\otimes_AM\hookrightarrow E_{\D(_AA)}$.

\vskip5pt

In particular, if $\D(_AA)\otimes_AM$ is an injective right $B$-module, then $U_\Lambda =(\D(_AA), \D(_AA)\otimes_AM)_{\rm Id}\oplus (0, \D(_BB))$ is a unique cotilting right $\Lambda$-module, up to multiplicities of indecomposable direct summands, such that $\mathscr{S}(A, M, B)_r = {}^\perp (U_\Lambda)$.}

\section{\bf Proof of Theorem \ref{theorem-recollement-additive-cat}}

\subsection{} Let $\mathscr{A}, \mathscr{B}, \mathscr{C}$ be additive categories. The diagram of functors
\begin{center}
\begin{picture}(100,40)
\put(13,20){\makebox(-22,2) {$\mathscr B$}}
\put(37,20){\makebox(25,0.8) {$\mathscr A$}}
\put(88,20){\makebox(25,0.5){$\mathscr C$}}
\put(10,20){\vector(1,0){30}} \put(60,20){\vector(1,0){30}}
\put(25,23){\makebox(3,1){\scriptsize$i_\ast$}}
\put(74,23){\makebox(3,1){\scriptsize$j^\ast$}}
\put(40,11){\vector(-1,0){30}}
\put(90,11){\vector(-1,0){30}}
\put(25,15){\makebox(3,1){\scriptsize$i^{!}$}}
\put(74,15){\makebox(3,1){\scriptsize$j_{*}$}}
\put(40,28){\vector(-1,0){30}} \put(90,28){\vector(-1,0){30}}
\put(25,32){\makebox(3,1){\scriptsize$i^{*}$}}
\put(74,32){\makebox(3,1){\scriptsize$j_{!}$}}
\end{picture}
\end{center}
\vskip-10pt \noindent is a recollement of $\mathscr{A}$ relative to $\mathscr{B}$ and $\mathscr{C}$, if the conditions $({\rm R1}), ({\rm R2}), ({\rm R3})$ are satisfied$:$

\vskip5pt

$({\rm R1})$  \ \ $(i^*, i_*)$, $(i_*, i^!)$, $(j_!, j^*)$ and $(j^*, j_*)$ are adjoint pairs$;$

$({\rm R2})$ \ \ $i_*$, $j_!$ and $j_*$ are fully faithful$;$

$({\rm R3})$ \ \ $\Ima i_*=\Ker j^*$.

\vskip5pt

Since the functors in an adjoint pair between additive categories are additive functors,
all the six functors in a recollement of additive categories are additive.

\vskip5pt

\subsection{} The following fact is easy.

\begin{lem} \ Let $\mathscr{A}$ and $\mathscr{B}$ be additive categories with subcategories $\mathscr{X}$ and $\mathscr{Y}$, respectively,
$(F, G)$ an adjoint pair with $F: \mathscr{A}\to\mathscr{B}$ and $G: \mathscr{B}\to\mathscr{A}$. If $F\mathscr{X}\subseteq\mathscr{Y}$ and $G \mathscr{Y}\subseteq\mathscr{X}$, then there is an induced adjoint pair $(\overline{F}, \overline{G})$ with $\overline{F}: \mathscr{A}/\mathscr{X}\to\mathscr{B}/\mathscr{Y}$ and
 $\overline{G}: \mathscr{B}/\mathscr{Y}\to\mathscr{A}/\mathscr{X}$.
\label{lemma-induction-quotient-cat}
\end{lem}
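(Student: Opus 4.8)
The plan is to build $\overline F$ and $\overline G$ directly from $F$ and $G$ and then descend the adjunction. On objects set $\overline F(M)=F(M)$ and $\overline G(N)=G(N)$; on morphisms send the class $\overline f\in\Hom_{\mathscr A/\mathscr X}(M,N)$ of $f\colon M\to N$ to the class of $F(f)$ in $\mathscr B/\mathscr Y$. The first step is well-definedness: if $f=b\circ a$ factors through an object $X\in\mathscr X$, then $F(f)=F(b)\circ F(a)$ factors through $F(X)$, and $F(X)\in\mathscr Y$ by the hypothesis $F\mathscr X\subseteq\mathscr Y$, so the class of $F(f)$ is $0$. Thus $\overline F$ is a well-defined additive functor, and dually $\overline G$ is well defined using $G\mathscr Y\subseteq\mathscr X$. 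Additivity and functoriality are inherited from $F$ and $G$ since the quotient functors $\mathscr A\to\mathscr A/\mathscr X$ and $\mathscr B\to\mathscr B/\mathscr Y$ are additive and full.

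The heart of the argument is the adjunction $(\overline F,\overline G)$. Let $\varepsilon\colon\mathrm{Id}_{\mathscr A}\to GF$ and $\delta\colon FG\to\mathrm{Id}_{\mathscr B}$ be the unit and counit of $(F,G)$, so the adjunction bijection $\Theta_{M,N}\colon\Hom_{\mathscr B}(FM,N)\xrightarrow{\ \sim\ }\Hom_{\mathscr A}(M,GN)$ is $g\mapsto G(g)\circ\varepsilon_M$, with inverse $f\mapsto\delta_N\circ F(f)$. I would check that $\Theta_{M,N}$ carries the subgroup $(FM,\mathscr Y,N)$ of morphisms factoring through $\mathscr Y$ into $(M,\mathscr X,GN)$: if $g=b\circ a$ with $a\colon FM\to Y$, $b\colon Y\to N$, $Y\in\mathscr Y$, then $G(g)\circ\varepsilon_M=G(b)\circ\bigl(G(a)\circ\varepsilon_M\bigr)$ factors through $GY$, and $GY\in\mathscr X$ by $G\mathscr Y\subseteq\mathscr X$. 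Symmetrically $\Theta_{M,N}^{-1}$ carries $(M,\mathscr X,GN)$ into $(FM,\mathscr Y,N)$: if $f=d\circ c$ with $c\colon M\to X$, $X\in\mathscr X$, then $\delta_N\circ F(f)=\bigl(\delta_N\circ F(d)\bigr)\circ F(c)$ factors through $FX\in\mathscr Y$. Hence $\Theta_{M,N}$ and its inverse descend to mutually inverse isomorphisms
\[
\overline\Theta_{M,N}\colon\Hom_{\mathscr B/\mathscr Y}(\overline F M,N)\xrightarrow{\ \sim\ }\Hom_{\mathscr A/\mathscr X}(M,\overline G N).
\]

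It remains to verify that $\overline\Theta$ is natural in both variables. Given morphisms in the quotient categories, lift them to $\mathscr A$ and $\mathscr B$ (possible since the quotient functors are full); the naturality squares for $\overline\Theta$ are then the images under the quotient functors of the corresponding naturality squares for $\Theta$, which commute, and the choice of lift does not matter after passing back to the quotient because any two lifts differ by a morphism factoring through $\mathscr X$ or $\mathscr Y$. This establishes the natural isomorphism, i.e. $(\overline F,\overline G)$ is an adjoint pair.

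The statement is essentially formal, so I do not expect a genuine obstacle; the only point that needs attention is the symmetric bookkeeping of the two hypotheses — $F\mathscr X\subseteq\mathscr Y$ is what makes $\overline F$ well defined and also what makes $\Theta^{-1}$ respect the ideals of morphisms factoring through the subcategories, while $G\mathscr Y\subseteq\mathscr X$ plays the same double role for $\overline G$ and $\Theta$ — together with the (routine) fact that "factoring through a subcategory" defines a two-sided ideal of morphisms, which is exactly what is needed for the quotient categories and for the descent of $\Theta$.
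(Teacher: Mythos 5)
Your proof is correct and is exactly the standard argument the paper has in mind when it dismisses the lemma as "easy" without proof: descend $F$ and $G$ using the two containment hypotheses, then observe that the adjunction bijection and its inverse each carry the ideal of morphisms factoring through the distinguished subcategory into the corresponding ideal, so the bijection and its naturality pass to the quotients. Nothing is missing.
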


Let $\Lambda=\left[\begin{smallmatrix}
A&M\\0&B
\end{smallmatrix}\right]$ be an Artin algebra. Define five functors as follows.
$$\begin{array}{ll}
i^*: \mathscr{S}(A, M, B)\to\modcat{A}, \ \left[\begin{smallmatrix} X\\
Y \end{smallmatrix}\right]_{\phi}\mapsto\Cok(\phi); & j_!: \modcat{B}\to\mathscr{S}(A, M, B), \ Y\mapsto \left[\begin{smallmatrix} M\otimes_BY \\
Y \end{smallmatrix}\right]_{\rm Id};\\
i_*: \modcat{A}\to\mathscr{S}(A, M, B), \ X\mapsto \left[\begin{smallmatrix} X\\
0 \end{smallmatrix}\right]; &  j^*: \mathscr{S}(A, M, B)\to \modcat{B}, \ \left[\begin{smallmatrix} X\\
Y \end{smallmatrix}\right]_{\phi}\mapsto Y;\\
i_!: \mathscr{S}(A, M, B)\to\modcat{A}, \ \left[\begin{smallmatrix} X\\
Y \end{smallmatrix}\right]_{\phi}\mapsto X. &
\end{array}$$

\begin{lem}
Let $\Lambda=\left[\begin{smallmatrix}
A&M\\0&B
\end{smallmatrix}\right]$ be an Artin algebra. Then

\vskip5pt

$(1)$ \  $(i^*, i_*)$, $(i_*, i^!)$, and $(j_!, j^*)$ are adjoint pairs$;$

\vskip5pt

$(2)$  \ $i_*$ and $j_!$ are fully faithful$;$

\vskip5pt

$(3)$ \ $\Ima i_*=\Ker j^*$.
\label{lemma-adjoint}
\end{lem}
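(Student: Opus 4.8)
The plan is to verify the three assertions of Lemma \ref{lemma-adjoint} in turn, working directly with the triple description of $\Lambda$-modules and the explicit definitions of the five functors. None of the steps is deep; the work is in checking naturality of the bijections.

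\textbf{Step 1: the adjoint pairs.} For $(i^*, i_*)$, given $\left[\begin{smallmatrix} X\\ Y\end{smallmatrix}\right]_\phi \in \mathscr{S}(A, M, B)$ and $X'\in\modcat{A}$, I would exhibit a natural isomorphism $\Hom_A(\Cok(\phi), X')\cong \Hom_\Lambda(\left[\begin{smallmatrix} X\\ Y\end{smallmatrix}\right]_\phi, \left[\begin{smallmatrix} X'\\ 0\end{smallmatrix}\right])$. A $\Lambda$-map into $\left[\begin{smallmatrix} X'\\ 0\end{smallmatrix}\right]$ is a pair $\left[\begin{smallmatrix} f_1\\ f_2\end{smallmatrix}\right]$ with $f_2 = 0$ (since the target's lower term is $0$) and $f_1\phi = 0$ (the commuting square forces it), i.e.\ exactly an $A$-map $X\to X'$ killing $\Ima\phi$, i.e.\ an $A$-map $\Cok(\phi)\to X'$ via the projection $\pi\colon X\to \Cok(\phi)$; this correspondence is clearly natural in both variables. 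For $(i_*, i^!)$, note $i^! = i_!$ on objects, sending $\left[\begin{smallmatrix} X\\ Y\end{smallmatrix}\right]_\phi\mapsto X$; here a $\Lambda$-map $\left[\begin{smallmatrix} X'\\ 0\end{smallmatrix}\right]\to \left[\begin{smallmatrix} X\\ Y\end{smallmatrix}\right]_\phi$ is a pair $\left[\begin{smallmatrix} f_1\\ f_2\end{smallmatrix}\right]$ with $f_2\colon 0\to Y$ necessarily zero, and then the commuting square is automatic, so such maps are exactly $A$-maps $X'\to X$, giving $\Hom_\Lambda(\left[\begin{smallmatrix} X'\\ 0\end{smallmatrix}\right], \left[\begin{smallmatrix} X\\ Y\end{smallmatrix}\right]_\phi)\cong\Hom_A(X', X)$ naturally. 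For $(j_!, j^*)$, given $Y'\in\modcat{B}$ and $\left[\begin{smallmatrix} X\\ Y\end{smallmatrix}\right]_\phi$, a $\Lambda$-map $\left[\begin{smallmatrix} M\otimes_B Y'\\ Y'\end{smallmatrix}\right]_{\rm Id}\to \left[\begin{smallmatrix} X\\ Y\end{smallmatrix}\right]_\phi$ is a pair $\left[\begin{smallmatrix} f_1\\ f_2\end{smallmatrix}\right]$ with $f_1\colon M\otimes_B Y'\to X$ and $f_2\colon Y'\to Y$ satisfying $f_1 = \phi(1\otimes f_2)$; thus $f_1$ is determined by $f_2$, and the map is free to be any $B$-map $Y'\to Y$, yielding $\Hom_\Lambda(j_! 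Y', \left[\begin{smallmatrix} X\\ Y\end{smallmatrix}\right]_\phi)\cong\Hom_B(Y', Y) = \Hom_B(Y', j^*\left[\begin{smallmatrix} X\\ Y\end{smallmatrix}\right]_\phi)$, again natural.

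\textbf{Step 2: full faithfulness.} For $i_*$, take $X_1, X_2\in\modcat{A}$; a $\Lambda$-map $\left[\begin{smallmatrix} X_1\\ 0\end{smallmatrix}\right]\to\left[\begin{smallmatrix} X_2\\ 0\end{smallmatrix}\right]$ is a pair $\left[\begin{smallmatrix} f_1\\ 0\end{smallmatrix}\right]$ with $f_1\colon X_1\to X_2$ arbitrary (the commuting square is vacuous), so $\Hom_A(X_1, X_2)\xrightarrow{\sim}\Hom_\Lambda(i_*X_1, i_*X_2)$. For $j_!$, apply the adjunction isomorphism of Step 1 with $\left[\begin{smallmatrix} X\\ Y\end{smallmatrix}\right]_\phi = j_! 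Y_2 = \left[\begin{smallmatrix} M\otimes_B Y_2\\ Y_2\end{smallmatrix}\right]_{\rm Id}$: then $j^* j_! Y_2 = Y_2$ and the unit $Y_2\to j^* j_! Y_2$ is the identity, so $\Hom_\Lambda(j_! Y_1, j_! Y_2)\cong\Hom_B(Y_1, j^* j_! Y_2) = \Hom_B(Y_1, Y_2)$ and one checks this composite is the map induced by $j_!$, so $j_!$ is fully faithful.

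\textbf{Step 3: $\Ima i_* = \Ker j^*$.} By definition $j^*\left[\begin{smallmatrix} X\\ Y\end{smallmatrix}\right]_\phi = Y$, so $\left[\begin{smallmatrix} X\\ Y\end{smallmatrix}\right]_\phi\in\Ker j^*$ iff $Y = 0$; and an object with lower term $0$ is precisely of the form $\left[\begin{smallmatrix} X\\ 0\end{smallmatrix}\right] = i_* X$ (the map $\phi\colon M\otimes_B 0\to X$ being forced to be zero, hence monic, so such objects really do lie in $\mathscr{S}(A, M, B)$). Since $i_*$ is fully faithful by Step 2, $\Ima i_*$ is a subcategory in the relevant sense, and the two coincide. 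The main obstacle, such as it is, is purely bookkeeping: one must be careful that the isomorphisms of Step 1 are natural transformations of bifunctors (so that they genuinely constitute adjunctions rather than pointwise bijections), which amounts to chasing the defining commutative squares of $\Lambda$-maps through each identification; this is routine but must be done honestly. I would also remark that $i^!$ and $i_!$ agree as stated, so no separate construction of $i^!$ is needed beyond recording the adjunction $(i_*, i^!)$ from Step 1.
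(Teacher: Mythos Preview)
Your proposal is correct and follows essentially the same approach as the paper: both verify the three adjunctions by unraveling the triple description of $\Lambda$-maps, check full faithfulness of $i_*$ and $j_!$ by direct inspection of $\Hom$-sets, and observe that $\Ker j^*$ consists exactly of objects with zero lower term. The only cosmetic difference is that for full faithfulness of $j_!$ you invoke the adjunction $(j_!,j^*)$ together with $j^*j_!\cong\mathrm{Id}$, whereas the paper writes out the $\Hom$-isomorphism $\Hom_\Lambda(j_!Y_1,j_!Y_2)\cong\Hom_B(Y_1,Y_2)$ directly; these amount to the same computation.
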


\begin{proof}
$(1)$ \ For any $\left[\begin{smallmatrix} X\\
Y \end{smallmatrix}\right]_{\phi}\in\mathscr{S}(A, M, B)$, $W\in\modcat{A}$ and $V\in\modcat{B}$, we have the isomorphisms:
$$\begin{array}{l}
\Hom_A(i^*(\left[\begin{smallmatrix} X\\
Y \end{smallmatrix}\right]_{\phi}), W)=\Hom_A(\Cok(\phi), W)\cong\Hom_{\Lambda}(\left[\begin{smallmatrix} X\\
Y \end{smallmatrix}\right]_{\phi}, \left[\begin{smallmatrix} W\\
0 \end{smallmatrix}\right])=\Hom_{\Lambda}(\left[\begin{smallmatrix} X\\
Y \end{smallmatrix}\right]_{\phi}, i_*(W));\\
\Hom_{\Lambda}(i_*(W), \left[\begin{smallmatrix} X\\
Y \end{smallmatrix}\right]_{\phi})=\Hom_{\Lambda}(\left[\begin{smallmatrix} W\\
0 \end{smallmatrix}\right], \left[\begin{smallmatrix} X\\
Y \end{smallmatrix}\right]_{\phi})\cong\Hom_A(W, X)=\Hom_A(W, i_!(\left[\begin{smallmatrix} X\\
Y \end{smallmatrix}\right]_{\phi}));\\
\Hom_{\Lambda}(j_!(V), \left[\begin{smallmatrix} X\\
Y \end{smallmatrix}\right]_{\phi})=\Hom_{\Lambda}(\left[\begin{smallmatrix} M\otimes_BV\\
V \end{smallmatrix}\right]_1, \left[\begin{smallmatrix} X\\
Y \end{smallmatrix}\right]_{\phi})\cong\Hom_B(V, Y)=\Hom_B(V, j^*(\left[\begin{smallmatrix} X\\
Y \end{smallmatrix}\right]_{\phi})).
\end{array}$$
These show that $(i^*, i_*)$, $(i_*, i^!)$, and $(j_!, j^*)$ are adjoint pairs.

\vskip5pt

$(2)$ \ For any $X_1, X_2\in\modcat{A}$ and $Y_1, Y_2\in\modcat{B}$, we have the isomorphisms:
$$\begin{array}{l}
\Hom_{\Lambda}(i_*(X_1), i_*(X_2))=\Hom_{\Lambda}(\left[\begin{smallmatrix} X_1\\
0 \end{smallmatrix}\right], \left[\begin{smallmatrix} X_2\\
0 \end{smallmatrix}\right])\cong\Hom_A(X_1, X_2); \\
\Hom_{\Lambda}(j_!(Y_1), j_!(Y_2))=\Hom_{\Lambda}(\left[\begin{smallmatrix} M\otimes_BY_1\\
Y_1 \end{smallmatrix}\right]_{\rm Id},\left[\begin{smallmatrix} M\otimes_BY_2\\
Y_2 \end{smallmatrix}\right]_{\rm Id})\cong\Hom_B(Y_1, Y_2).
\end{array}$$
These show that $i_*$ and $j_!$ are fully faithful.

\vskip5pt

$(3)$ \ This is clear.
\end{proof}

Suppose that $_AM_B$ satisfies the condition ${\rm (IP)}$. By Proposition \ref{enoughprojinjobj}, the injective objects of $\mathscr{S}(A, M, B)$
are exactly $\left[\begin{smallmatrix} I \\ 0 \end{smallmatrix}\right]$ and $\left[\begin{smallmatrix}M\otimes_BJ\\ J \end{smallmatrix}\right]_1$,
where $I$ runs over injective $A$-modules,  and $J$ runs over injective $B$-modules.
Thus, by the constructions all the functors $i^*$, $i_*$, $i_!$, $j_!$ and $j^*$ preserve injective objects. It follows from Lemmas \ref{lemma-induction-quotient-cat} and \ref{lemma-adjoint} that there are the induced functors:
$$\begin{array}{ll}
\overline{i^*}: \overline{\mathscr{S}(A, M, B)}\to A\mbox{-}\overline{\rm mod}, \quad \left[\begin{smallmatrix} X\\
Y \end{smallmatrix}\right]_{\phi}\mapsto\Cok(\phi); & \overline{j_!}: B\mbox{-}\overline{\rm mod}\to\overline{\mathscr{S}(A, M, B)}, \quad Y\mapsto \left[\begin{smallmatrix} M\otimes_BY \\
Y \end{smallmatrix}\right]_{\rm Id};\\
\overline{i_*}: A\mbox{-}\overline{\rm mod}\to\overline{\mathscr{S}(A, M, B)}, \quad X\mapsto \left[\begin{smallmatrix} X\\
0 \end{smallmatrix}\right]; &  \overline{j^*}: \overline{\mathscr{S}(A, M, B)}\to B\mbox{-}\overline{\rm mod}, \quad \left[\begin{smallmatrix} X\\
Y \end{smallmatrix}\right]_{\phi}\mapsto Y;\\
\overline{i_!}: \overline{\mathscr{S}(A, M, B)}\to A\mbox{-}\overline{\rm mod}, \quad \left[\begin{smallmatrix} X\\
Y \end{smallmatrix}\right]_{\phi}\mapsto X &
\end{array}$$
such that the following fact holds:
\begin{lem} \label{corollary-adjoint-pairs} \ If $_AM_B$ satisfies the condition ${\rm (IP)}$, then $(\overline{i^*},\ \overline{i_*})$, $(\overline{i_*},\ \overline{i^!})$ and $(\overline{j_!},\ \overline{j^*})$ are adjoint pairs.

\vskip5pt

Moreover, if in addition both $A$ and $B$ are selfinjective algebras, then all the functors $\overline{i^*}, \ \overline{i_*}, \ \overline{i^!}, \ \overline{j_!}, \ \overline{j^*}$
are triangle functors between triangulated categories.
\end{lem}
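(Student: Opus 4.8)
Both statements are obtained by assembling machinery already in place, so no new construction is needed. For the first assertion, recall that immediately before the statement it was observed that, under $(\mathrm{IP})$, each of $i^*,i_*,i^!,j_!,j^*$ sends injective objects to injective objects; this is checked term by term on the explicit lists $\left[\begin{smallmatrix}I\\ 0\end{smallmatrix}\right]$, $\left[\begin{smallmatrix}M\otimes_BJ\\ J\end{smallmatrix}\right]_{\rm Id}$ of indecomposable injectives of $\mathscr{S}(A,M,B)$ from Proposition~\ref{enoughprojinjobj}, using that a left $A$-module $M\otimes_B J$ with $J$ injective over $B$ is injective because $J\in{\rm add}(\D(B_B))$ and $M\otimes_B\D(B_B)$ is injective by $(\mathrm{IP})$. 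Since ${\rm inj}(A)$, ${\rm inj}(B)$ and the subcategory of injective objects of $\mathscr{S}(A,M,B)$ are exactly the subcategories divided out in forming $A\mbox{-}\overline{\rm mod}$, $B\mbox{-}\overline{\rm mod}$ and $\overline{\mathscr{S}(A,M,B)}$, each of the three adjoint pairs $(i^*,i_*)$, $(i_*,i^!)$, $(j_!,j^*)$ of Lemma~\ref{lemma-adjoint}(1) satisfies the two containment hypotheses of Lemma~\ref{lemma-induction-quotient-cat}, which therefore produces the induced adjoint pairs $(\overline{i^*},\overline{i_*})$, $(\overline{i_*},\overline{i^!})$, $(\overline{j_!},\overline{j^*})$.

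\textbf{Triangulated setting.} For the second assertion assume in addition that $A$ and $B$ are selfinjective. Then ${\rm proj}(A)={\rm inj}(A)$ and ${\rm proj}(B)={\rm inj}(B)$, so $A\mbox{-}{\rm mod}$ and $B\mbox{-}{\rm mod}$ are Frobenius categories and $A\mbox{-}\overline{\rm mod}$, $B\mbox{-}\overline{\rm mod}$ are triangulated. Moreover, by Example~\ref{mainexample}(7), $(\mathrm{IP})$ together with $B$ selfinjective forces $_AM$ to be injective, hence projective since $A$ is selfinjective; thus Corollary~\ref{corollary-frobenius-cat} applies and $\mathscr{S}(A,M,B)$ is a Frobenius category, so $\overline{\mathscr{S}(A,M,B)}$ is triangulated as well. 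I would then invoke the standard construction of the triangulated structure on the stable category of a Frobenius category: its suspension $\Sigma$ is given by choosing, for each object $Z$, a short exact sequence $0\to Z\to I(Z)\to\Sigma Z\to 0$ with $I(Z)$ projective-injective, and an exact functor between Frobenius categories preserving projective-injective objects induces a triangle functor on the stable categories, the required natural isomorphism $\overline{F}\Sigma\cong\Sigma\overline{F}$ being obtained by applying $F$ to these defining sequences.

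\textbf{Verification.} It remains only to check that each of $i^*,i_*,i^!,j_!,j^*$ is exact for the canonical exact structures and preserves projective-injective objects. The latter is the verification of the first paragraph, reread with ``injective $=$ projective-injective'' in all three categories. As for exactness: a short exact sequence in $\mathscr{S}(A,M,B)$ is, by definition of the canonical exact structure, a short exact sequence of $\Lambda$-modules with all terms in $\mathscr{S}(A,M,B)$, so $i_*\colon X\mapsto\left[\begin{smallmatrix}X\\ 0\end{smallmatrix}\right]$, $i^!\colon\left[\begin{smallmatrix}X\\ Y\end{smallmatrix}\right]_\phi\mapsto X$ and $j^*\colon\left[\begin{smallmatrix}X\\ Y\end{smallmatrix}\right]_\phi\mapsto Y$ are obviously exact; $j_!\colon Y\mapsto\left[\begin{smallmatrix}M\otimes_BY\\ Y\end{smallmatrix}\right]_{\rm Id}$ is exact because $M_B$ is projective; and $i^*\colon\left[\begin{smallmatrix}X\\ Y\end{smallmatrix}\right]_\phi\mapsto\Cok(\phi)$ is exact on $\mathscr{S}(A,M,B)$ by the Snake Lemma applied to the diagram in the proof of Lemma~\ref{scat0} (rows short exact, all structure maps $\phi_i$ monic, hence the cokernel row $0\to\Cok\phi_1\to\Cok\phi_2\to\Cok\phi_3\to 0$ is short exact). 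Combining exactness with preservation of projective-injectives and the general principle above shows $\overline{i^*},\overline{i_*},\overline{i^!},\overline{j_!},\overline{j^*}$ are triangle functors, completing the proof.

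\textbf{Main point of care.} There is no genuine obstacle; the only step that deserves a moment's attention is the bookkeeping in the second paragraph — confirming, via Example~\ref{mainexample}(7) and Corollary~\ref{corollary-frobenius-cat}, that under $(\mathrm{IP})$ plus selfinjectivity $\mathscr{S}(A,M,B)$ really is Frobenius (so that $\overline{\mathscr{S}(A,M,B)}$ is triangulated at all), and matching the explicit projective-injective objects correctly under each of the five functors. Once this is settled, the triangle-functor assertion is a formal consequence of the exactness established in the last paragraph.
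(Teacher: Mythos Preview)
Your proof is correct. The first assertion is handled exactly as in the paper: both you and the paper simply invoke the preservation of injectives (noted just before the lemma) together with Lemmas~\ref{lemma-induction-quotient-cat} and~\ref{lemma-adjoint}.

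For the second assertion your argument diverges slightly from the paper's. You verify directly that all five functors $i^*,i_*,i^!,j_!,j^*$ are exact between Frobenius categories and preserve projective-injectives, and conclude that each induced stable functor is a triangle functor. The paper instead checks exactness only for the two obviously exact functors $i_*$ and $j^*$, deduces that $\overline{i_*}$ and $\overline{j^*}$ are triangle functors, and then appeals to the general fact (Keller, Neeman) that in an adjoint pair between triangulated categories one functor is triangulated if and only if the other is; this yields the remaining three without computing, e.g., the exactness of $i^*$ via the Snake Lemma. Your route is a bit more hands-on but entirely valid; the paper's is more economical and avoids the cokernel argument altogether.
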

\begin{proof} We only need to justify the last assertion. In this case, both $A\mbox{-}\overline{\rm mod}$ and $B\mbox{-}\overline{\rm mod}$ are triangulated categories.  By Example \ref{mainexample}$(7)$, $_AM$ is projective; and then by Corollary \ref{corollary-frobenius-cat}, $\mathscr{S}(A,M,B)$ is a {\rm Frobenius} category, hence $\overline{\mathscr{S}(A, M, B)}$ is also a triangulated category. See [H1, p.16]. Recall the distinguished triangles in the stable category of a Frobenius category.
Each exact sequence $0
\rightarrow X_1 \stackrel{u}{\rightarrow} X_2
\stackrel{v}{\rightarrow} X_3 \rightarrow 0$ in $A$-mod
gives rise to a distinguished triangle $X_1
\stackrel{{u}}{\rightarrow} X_2 \stackrel{{v}}{\rightarrow} X_3
{\rightarrow} X[1]$ \ of $A\mbox{-}\overline{\rm mod}$, and conversely, each
distinguished triangle of $A\mbox{-}\overline{\rm mod}$ is of this form
up to an isomorphism of triangles (see [H1], Chap. 1, Sect. 2; also [CZ], Lemma 1.2). Since the functor
$i_*: A\mbox{-}{\rm mod}\to \mathscr{S}(A, M, B)$ given by $X\mapsto \left[\begin{smallmatrix} X\\ 0 \end{smallmatrix}\right]$ preserves exact sequences, $\overline{i_*}: A\mbox{-}\overline{\rm mod}\to\overline{\mathscr{S}(A, M, B)}$ preserves the distinguished triangles, i.e., $\overline{i_*}$ is a triangle functor. Note that in an adjoint pair $(F, G)$ between triangulated categories, $F$ is
a triangle functor if and only if so is $G$ ([Ke, 6.7], [N, p.179]). Thus,  $\overline{i^*}$ and $\overline{i_!}$ are triangle functors.

\vskip5pt

Similarly, $\overline{j^*}: \overline{\mathscr{S}(A, M, B)}\to B\mbox{-}\overline{\rm mod}$
is a triangle functor, and then so is $\overline{j_!}$.
\end{proof}

The following lemma is crucial in the proof of Theorem \ref{theorem-recollement-additive-cat}.

\begin{lem} Assume that $_AM_B$ satisfies the condition ${\rm (IP)}$. Then there exists a fully faithful functor $\overline{j_*}: B\mbox{-}\overline{\rm mod}\to\overline{\mathscr{S}(A, M, B)}$ such that $(\overline{j^*}, \overline{j_*})$ is an adjoint pair.
\label{lemma-fully-faithfu-j}
\end{lem}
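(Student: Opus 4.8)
The plan is to construct $\overline{j_*}$ explicitly as the functor induced from a module-level functor $j_*\colon B\text{-mod}\to \mathscr S(A,M,B)$, and then verify that $j_*$ both preserves injectives and is right adjoint to $j^*$; the induced adjunction and full faithfulness will then follow from Lemma~\ref{lemma-induction-quotient-cat} together with a lemma of the form ``a fully faithful functor with enough injective objects descends to a fully faithful functor on stable categories.'' The natural candidate for $j_*$ is dictated by the right-adjoint of $j^*$ at the module level. Starting from an abelian-level adjunction is delicate here because $\mathscr S(A,M,B)$ is only an exact subcategory, so instead I would guess the formula directly: given $_BY$, form an injective copresentation, or more simply note that the injective object $\bigl[\begin{smallmatrix} M\otimes_B J\\ J\end{smallmatrix}\bigr]_{\mathrm{Id}}$ attached to an injective $_BJ$ should be $j_*(J)$, and extend to all $Y$ by taking $j_*(Y)$ to be the pullback-type construction $\bigl[\begin{smallmatrix} M\otimes_B Y\\ Y\end{smallmatrix}\bigr]_{\mathrm{Id}}$ itself — i.e. in fact $j_*=j_!$ at the module level up to a shift, but that cannot be literally right since $j_!$ is already a left adjoint. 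So more carefully: I expect $j_*(Y)$ to be defined via an injective envelope $Y\hookrightarrow J$ with cokernel $Y'$, sitting in $0\to j_*(Y)\to \bigl[\begin{smallmatrix}M\otimes_B J\\ J\end{smallmatrix}\bigr]_{\mathrm{Id}}\to \bigl[\begin{smallmatrix}M\otimes_B Y'\\ Y'\end{smallmatrix}\bigr]_{\mathrm{Id}}$, mirroring the computation of injective objects in Proposition~\ref{enoughprojinjobj}.

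Concretely the steps are: (i) define $j_*\colon B\text{-mod}\to\mathscr S(A,M,B)$ on objects and morphisms and check well-definedness, using the $\mathrm{(IP)}$ hypothesis (which via Example~\ref{mainexample} gives $M\otimes_B\D(B)$ injective and $M_B$ projective, so that $M\otimes_B-$ is exact and carries injective $B$-modules to injective $A$-modules); (ii) establish the module-level adjunction $\Hom_B(j^*(\bigl[\begin{smallmatrix}X\\ Y\end{smallmatrix}\bigr]_\phi), V)$ versus $\Hom_\Lambda(\bigl[\begin{smallmatrix}X\\ Y\end{smallmatrix}\bigr]_\phi, j_*(V))$ — or at least the stable version of it — by a diagram chase using that $\phi$ is monic and the relevant $A$-modules are injective, exactly in the style of the injectivity proof in Proposition~\ref{enoughprojinjobj}; (iii) verify $j_*$ sends injective $B$-modules to injective objects of $\mathscr S(A,M,B)$ (so that $\overline{j_*}$ exists), and that $j^*$ sends injective objects of $\mathscr S(A,M,B)$ to injective $B$-modules (so the pair descends); (iv) invoke Lemma~\ref{lemma-induction-quotient-cat} to get the adjoint pair $(\overline{j^*},\overline{j_*})$; (v) prove $\overline{j_*}$ is fully faithful, which amounts to showing that the unit or counit of the module-level adjunction is a stable isomorphism, i.e. that its kernel/cokernel is an injective object — again an injectivity bookkeeping argument.

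The full faithfulness in step (v) is where I expect the real content to lie: on the nose $j_*$ need not be fully faithful as a functor of module categories, and one must show that the failure is measured entirely by morphisms factoring through injective objects of $\mathscr S(A,M,B)$. I would handle this by writing down the counit $j^*j_*(V)\to V$, identifying it (it should be an isomorphism or at least a split surjection with injective kernel), and then using the adjunction isomorphisms from step (ii) to rewrite $\Hom_{\overline{\mathscr S(A,M,B)}}(j_*(V),j_*(W))\cong\Hom_{B\text{-}\overline{\mathrm{mod}}}(j^*j_*(V),W)\cong\Hom_{B\text{-}\overline{\mathrm{mod}}}(V,W)$. The subtlety is to make sure the two stable-category reductions — one modulo injective objects of $\mathscr S(A,M,B)$, one modulo injective $B$-modules — are compatible, which is precisely where the hypothesis that $j^*$ and $j_*$ preserve injectives (established in steps (iii)) does the work.

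Alternatively, and perhaps more cleanly, one can avoid an explicit $j_*$ by a general-nonsense argument: we already have from Lemma~\ref{corollary-adjoint-pairs} the adjoint triple $(\overline{i^*},\overline{i_*},\overline{i^!})$ and the adjoint pair $(\overline{j_!},\overline{j^*})$ with $\overline{i_*}$ fully faithful and $\mathrm{Im}\,\overline{i_*}=\mathrm{Ker}\,\overline{j^*}$; when $A$ and $B$ are selfinjective these are triangulated categories and a recollement always extends a half-recollement, giving $\overline{j_*}$ for free. For the general additive case I would instead mimic the triangulated construction: set $\overline{j_*}$ to be determined by the requirement that for each $Y$ there is an exact triangle $\overline{i_*}\overline{i^!}\,\overline{j_*}(Y)\to \overline{j_*}(Y)\to \overline{j_!}(Y')$ — but since additive categories lack triangles, this forces one back to the explicit module-level construction above. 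Hence my actual proof will be the explicit one, with the anticipated hard step being the full-faithfulness verification in step (v).
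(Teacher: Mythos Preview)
Your proposal has the right overall shape but contains a concrete error in the formula for $j_*(Y)$ that would derail the argument. Your kernel construction
\[
0\to j_*(Y)\to \Bigl[\begin{smallmatrix}M\otimes_B J\\ J\end{smallmatrix}\Bigr]_{\mathrm{Id}}\to \Bigl[\begin{smallmatrix}M\otimes_B Y'\\ Y'\end{smallmatrix}\Bigr]_{\mathrm{Id}}
\]
computes the kernel of $j_!(J)\to j_!(Y')$; since $M_B$ is projective, $j_!=M\otimes_B(-)$ is exact, so this kernel is precisely $j_!(Y)=\bigl[\begin{smallmatrix}M\otimes_B Y\\ Y\end{smallmatrix}\bigr]_{\mathrm{Id}}$ again --- exactly the object you correctly rejected two sentences earlier. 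The paper's definition is different: $j_*(Y):=\bigl[\begin{smallmatrix}E_Y\\ Y\end{smallmatrix}\bigr]_{\psi}$, where $\psi\colon M\otimes_B Y\hookrightarrow E_Y$ is an injective envelope of the \emph{$A$-module} $M\otimes_B Y$. This object does not arise as a kernel inside the image of $j_!$.

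There is a second structural problem: your plan in steps (i)--(iv) assumes a genuine functor $j_*\colon B\text{-mod}\to\mathscr S(A,M,B)$ at the module level, to which Lemma~\ref{lemma-induction-quotient-cat} can then be applied. No such functor exists. Given $h\colon Y\to Y'$, the required lift $f\colon E_Y\to E_{Y'}$ with $f\psi=\psi'(1\otimes h)$ is not unique; two choices differ by a map factoring through $\mathrm{Cok}(\psi)$, hence through an injective object $\bigl[\begin{smallmatrix}I\\0\end{smallmatrix}\bigr]$. So $j_*$ is only well-defined as a functor $B\text{-mod}\to\overline{\mathscr S(A,M,B)}$ into the stable category from the outset, and one must then separately check that it kills maps factoring through injective $B$-modules. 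The paper does exactly this: it builds $j_*\colon B\text{-mod}\to\overline{\mathscr S(A,M,B)}$, verifies both well-definedness conditions by explicit factorization arguments, and then checks fullness, faithfulness, and the stable adjunction $\overline{\Hom}_B(Y,Y')\cong\overline{\Hom}_\Lambda\bigl(\bigl[\begin{smallmatrix}X\\Y\end{smallmatrix}\bigr]_\phi,\bigl[\begin{smallmatrix}E_{Y'}\\Y'\end{smallmatrix}\bigr]_{\psi'}\bigr)$ directly. Your instinct that the adjunction holds ``at least in the stable version'' and that full faithfulness is the substantive step is correct, but the route through Lemma~\ref{lemma-induction-quotient-cat} is not available.
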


\begin{proof}  The following construction is  similar as [Z2, Thm. 3.5]. Define a functor $j_*: \modcat{B}\to\overline{\mathscr{S}(A, M, B)}$ as follows.
For $Y\in\modcat{B}$, define $j_*(Y): =\left[\begin{smallmatrix} E_Y\\
Y \end{smallmatrix}\right]_{\psi},$ where $E_Y$ is an injective envelope of the $A$-module $M\otimes_BY$ with embedding $\psi: M\otimes_BY\to E_Y$.
(This is clearly well-defined. One can see this also from the argument below, by taking $h = {\rm Id}_Y$.)
For $h: Y\to Y'$, there is a commutative diagram with exact rows
$$\xymatrix@R=0.5cm{0\ar[r] & M\otimes_BY\ar[r]^-{\psi}\ar[d]^{1\otimes h} & E_Y\ar[r]^-{\pi}\ar@{-->}[d]^{f} & \Cok(\psi)\ar[r]\ar@{-->}[d] & 0\\
0\ar[r] & M\otimes_BY'\ar[r]^-{\psi'} & E_{Y'}\ar[r] & \Cok(\psi')\ar[r] & 0}$$
and we define $j_*(h): = \overline{\left[\begin{smallmatrix} f\\
h\end{smallmatrix}\right]}: \ \left[\begin{smallmatrix} E_Y\\
Y \end{smallmatrix}\right]_{\psi}\to \left[\begin{smallmatrix} E_{Y'}\\
Y' \end{smallmatrix}\right]_{\psi'}$. We claim that $j_*(h)$ is well-defined, and hence the functor $j_*: \modcat{B}\to\overline{\mathscr{S}(A, M, B)}$ is well-defined.

\vskip5pt

In fact, if there is another $A$-map $f': E_Y\to E_{Y'}$ such that $f'\psi=\psi'(1\otimes h)$,
then $f'\psi=f\psi$, i.e.,  $(f-f')\psi=0$ and hence $f-f': E_Y\to E_{Y'}$ factors through $\Cok(\psi)$. Furthermore, $f-f'$ factors through an injective envelope $I$ of $\Cok(\psi)$. Then it is clear that $\left[\begin{smallmatrix} f\\
h \end{smallmatrix}\right]-\left[\begin{smallmatrix} f'\\
h \end{smallmatrix}\right] = \left[\begin{smallmatrix} f-f' \\
0 \end{smallmatrix}\right]: \left[\begin{smallmatrix} E_Y\\
Y \end{smallmatrix}\right]_{\psi}\to \left[\begin{smallmatrix} E_{Y'}\\
Y' \end{smallmatrix}\right]_{\psi'}$ factors through the injective object $\left[\begin{smallmatrix} I\\
0 \end{smallmatrix}\right]$ of $\mathscr{S}(A, M, B)$. So $\overline{\left[\begin{smallmatrix} f\\
h \end{smallmatrix}\right]}=\overline{\left[\begin{smallmatrix} f'\\
h \end{smallmatrix}\right]}$ in $\overline{\mathscr{S}(A, M, B)}$.

\vskip5pt

Next, we claim that the functor $j_*: \modcat{B}\to\overline{\mathscr{S}(A, M, B)}$ induces a functor $\overline{j_*}: B\mbox{-}\overline{\rm mod}\to\overline{\mathscr{S}(A, M, B)}$.
For this, assume that $h: Y\to Y'$ factors through an injective $B$-module $J$ via $h=h_2h_1$ with some $h_1\in\Hom_B(Y, J)$ and some $h_2\in\Hom_B(J, Y')$.
Taking an injective envelope $E_J$ of the $A$-module $M\otimes_BJ$ with embedding $\eta: M\otimes_BJ\to E_J$, then there are
$\sigma\in\Hom_A(E_Y, E_J)$ and $\delta\in\Hom_A(E_J, E_{Y'})$,
such that $\eta(1\otimes h_1)=\sigma\psi$ and $\psi'(1\otimes h_2)=\delta\eta$. Thus $f\psi=\psi'(1\otimes h)=\psi'(1\otimes h_2)(1\otimes h_1)=\delta\eta(1\otimes h_1)=\delta\sigma\psi$, i.e., $(f-\delta\sigma)\psi=0$. Hence $f-\delta\sigma$ factors through $\Cok(\psi)$ via $f-\delta\sigma=\gamma\pi$ with some $\gamma\in\Hom_A(\Cok(\psi), E_{Y'})$. Consider an injective envelope $I$ of $\Cok(\psi)$ with embedding $\alpha:\Cok(\psi)\to I$. Then there is a $\beta\in\Hom_A(I, E_{Y'})$ such that $\gamma=\beta\alpha$, and then we have
$f=\delta\sigma+\beta\alpha\pi$. We present this process as the diagram:
$$\xymatrix@R=0.6cm{0\ar[r] & M\otimes_BY\ar[rrr]^-{\psi}\ar[dd]_{1\otimes h}\ar[dr]^{1\otimes h_1} & & & E_Y\ar[r]^-{\pi}\ar[dd]^-{f}\ar@{-->}[dl]_{\sigma} & \Cok(\psi)\ar[rr]\ar[dd]\ar[dr]^{\alpha}\ar@{-->}[ddl]_-{\gamma} & & 0\\ & & M\otimes_BJ\ar[r]^-{\eta}\ar[dl]_-{1\otimes h_2} & E_J\ar@{-->}[dr]^-{\delta} & & & I\ar@{-->}[dll]_<<<<<<<<<{\beta} & \\
0\ar[r] & M\otimes_BY'\ar[rrr]^-{\psi'} & & & E_{Y'}\ar[r] & \Cok(\psi')\ar[rr] & &0 \ .}$$
Now, we get a $\Lambda$-map $\left(\begin{smallmatrix} {\left[\begin{smallmatrix} \sigma \\ h_1 \end{smallmatrix}\right]} \\ {\left[\begin{smallmatrix} \alpha\pi \\ 0 \end{smallmatrix}\right]} \end{smallmatrix}\right): \left[\begin{smallmatrix} E_Y \\ Y \end{smallmatrix}\right]_{\psi}\to \left[\begin{smallmatrix} E_J \\ J \end{smallmatrix}\right]_{\eta}\oplus\left[\begin{smallmatrix} I \\ 0 \end{smallmatrix}\right]$ and a $\Lambda$-map $(\left[\begin{smallmatrix} \delta \\ h_2 \end{smallmatrix}\right], \left[\begin{smallmatrix} \beta \\ 0 \end{smallmatrix}\right]): \left[\begin{smallmatrix} E_J \\ J \end{smallmatrix}\right]_{\eta}\oplus\left[\begin{smallmatrix} I \\ 0 \end{smallmatrix}\right]\to\left[\begin{smallmatrix} E_{Y'} \\ Y' \end{smallmatrix}\right]_{\psi'}$ with composition  $$(\left[\begin{smallmatrix} \delta \\ h_2 \end{smallmatrix}\right], \left[\begin{smallmatrix} \beta \\ 0 \end{smallmatrix}\right])\left(\begin{smallmatrix} {\left[\begin{smallmatrix} \sigma \\ h_1 \end{smallmatrix}\right]} \\ {\left[\begin{smallmatrix} \alpha\pi \\ 0 \end{smallmatrix}\right]} \end{smallmatrix}\right)=\left[\begin{smallmatrix} \delta\sigma \\ h_2h_1 \end{smallmatrix}\right]+\left[\begin{smallmatrix} \beta\alpha\pi \\ 0 \end{smallmatrix}\right]=\left[\begin{smallmatrix} \delta\sigma+\beta\alpha\pi \\ h_2h_1 \end{smallmatrix}\right]=\left[\begin{smallmatrix} f \\ h \end{smallmatrix}\right].$$ This shows that $\left[\begin{smallmatrix} f \\ h \end{smallmatrix}\right]: \left[\begin{smallmatrix} E_Y \\ Y \end{smallmatrix}\right]_{\psi}\to \left[\begin{smallmatrix} E_{Y'} \\ Y' \end{smallmatrix}\right]_{\psi'}$ factors through the injective object
$\left[\begin{smallmatrix} E_J \\ J \end{smallmatrix}\right]_{\eta}\oplus\left[\begin{smallmatrix} I \\ 0 \end{smallmatrix}\right]$ of $\mathscr{S}(A, M, B)$, and hence $j_*(h) = \overline{\left[\begin{smallmatrix} f\\
h\end{smallmatrix}\right]} = 0$ in $\overline{\mathscr{S}(A, M, B)}$. Thus $j_*$ induces a functor $$\overline{j_*}: B\mbox{-}\overline{\rm mod}\to\overline{\mathscr{S}(A, M, B)}$$ given by $\overline{j_*}(Y): = \left[\begin{smallmatrix} E_Y \\ Y \end{smallmatrix}\right]_{\psi}$ and $\overline{j_*}(\overline{h}): = \overline{\left[\begin{smallmatrix} f \\ h \end{smallmatrix}\right]}$.

\vskip5pt

By construction it is clear that $\overline{j_*}$ is  full. For $h: Y\to Y'$, assume that $\overline{j_*}(\overline{h})  = \overline{\left[\begin{smallmatrix} f \\ h \end{smallmatrix}\right]}=0$.
Then  $\left[\begin{smallmatrix} f \\ h \end{smallmatrix}\right]: \left[\begin{smallmatrix} E_Y \\ Y \end{smallmatrix}\right]_{\psi}\to \left[\begin{smallmatrix} E_{Y'} \\ Y' \end{smallmatrix}\right]_{\psi'}$ factors through an injective object $\left[\begin{smallmatrix} E_J \\ J \end{smallmatrix}\right]_{\eta}\oplus\left[\begin{smallmatrix} I \\ 0 \end{smallmatrix}\right]$ of $\mathscr{S}(A, M, B)$, and hence $h: Y\to Y'$ factors through the injective $B$-module $J$. So $\overline{j_*}$ is also faithful.

\vskip5pt

It remains to prove that $(\overline{j^*}, \overline{j_*})$ is an adjoint pair.  Thus, for $\left[\begin{smallmatrix} X \\ Y \end{smallmatrix}\right]_{\phi}\in\mathscr{S}(A, M, B)$ and $Y'\in\modcat{B}$, we need to show that there is a bi-functorial isomorphism
$\overline{\Hom}_B(\overline{j^*}(\left[\begin{smallmatrix} X \\ Y \end{smallmatrix}\right]_{\phi}), Y')\cong\overline{\Hom}_{\Lambda}(\left[\begin{smallmatrix} X \\ Y \end{smallmatrix}\right]_{\phi}, \overline{j_*}(Y'))$, i.e. $$\overline{\Hom}_B(Y, Y')\cong\overline{\Hom}_{\Lambda}(\left[\begin{smallmatrix} X \\ Y \end{smallmatrix}\right]_{\phi}, \left[\begin{smallmatrix} E_{Y'} \\ Y' \end{smallmatrix}\right]_{\psi'})$$
where $\psi': M\otimes_BY'\to E_{Y'}$ is an injective envelope of $M\otimes_BY'$. We claim that the map $\overline h\mapsto \overline{\left[\begin{smallmatrix} f\\
h\end{smallmatrix}\right]}$ gives such an isomorphism, where $f: X\rightarrow E_{Y'}$ is an $A$-map such that $f\phi= \psi'(1\otimes h)$.
In fact,  a $\Lambda$-map $\left[\begin{smallmatrix} f \\ h \end{smallmatrix}\right]: \left[\begin{smallmatrix} X \\ Y \end{smallmatrix}\right]_{\phi}\to\left[\begin{smallmatrix} E_{Y'} \\ Y' \end{smallmatrix}\right]_{\psi'}$ factors through an injective object  $\left[\begin{smallmatrix} I \\ 0 \end{smallmatrix}\right]\oplus\left[\begin{smallmatrix} E_{J} \\ J \end{smallmatrix}\right]_{\psi}$ of $\mathscr{S}(A, M, B)$  if and only if $h: Y\to Y'$ factors through the injective $B$-module $J$. This shows that the given map above is well-defined and injective; and by the construction it is clearly surjective. This completes the proof.
\end{proof}

\subsection{\bf Proof of Theorem \ref{theorem-recollement-additive-cat}}
To see the diagram $(1.1)$ forms a recollement of additive categories, by Lemmas \ref{corollary-adjoint-pairs} and \ref{lemma-fully-faithfu-j} it remains to prove that $\overline{i_*}$ and $\overline{j_!}$ are fully faithful, and that $\Ima\overline{i_*}=\Ker\overline{j^*}$.

\vskip5pt

Since $_AM_B$ satisfies the condition ({\rm IP}), by Proposition \ref{enoughprojinjobj}$(2)$,  $\left[\begin{smallmatrix} I \\ 0 \end{smallmatrix}\right]$ and $\left[\begin{smallmatrix}M\otimes_BJ\\ J \end{smallmatrix}\right]_{\rm Id}$ are injective objects of $\mathscr{S}(A, M, B)$, where $I$ is an injective $A$-module and $J$ is an injective $B$-module. Recall that $\overline{i_*}: A\mbox{-}\overline{\rm mod}\to\overline{\mathscr{S}(A, M, B)}$ is given by $X\mapsto \left[\begin{smallmatrix} X \\ 0 \end{smallmatrix}\right]$ and $\overline{f}\mapsto \overline{\left[\begin{smallmatrix} f \\ 0 \end{smallmatrix}\right]}$, and $\overline{j_!}: B\mbox{-}\overline{\rm mod}\to\overline{\mathscr{S}(A, M, B)}$ is given by $Y\mapsto \left[\begin{smallmatrix} M\otimes_BY \\ Y \end{smallmatrix}\right]_{\rm Id}$ and $\overline{h}\mapsto \overline{\left[\begin{smallmatrix} 1\otimes h \\ h \end{smallmatrix}\right]}$. It is easy to see that  $\overline{i_*}$ and $\overline{j_!}$ are fully faithful.

\vskip5pt

It is clear that $\Ker(\overline{j^*})=\{\left[\begin{smallmatrix} X \\ Y \end{smallmatrix}\right]_{\phi}\in\mathscr{S}(A, M, B) \ | \ _BY \ \mbox{injective} \}$. For any $\left[\begin{smallmatrix} X \\ Y \end{smallmatrix}\right]_{\phi}\in\Ker(\overline{j^*})$, by the condition ({\rm IP}),  $M\otimes_BY$ is an injective $A$-module, and hence the exact sequence $0\to M\otimes_BY\overset{\phi}\to X\to \Cok(\phi)\to 0$ splits. So $\left[\begin{smallmatrix} X \\ Y \end{smallmatrix}\right]_{\phi}\cong \left[\begin{smallmatrix} M\otimes_BY \\ Y \end{smallmatrix}\right]_{\rm Id}\oplus\left[\begin{smallmatrix} \Cok(\phi) \\ 0 \end{smallmatrix}\right]$. Since $\left[\begin{smallmatrix} M\otimes_BY \\ Y \end{smallmatrix}\right]_{\rm Id}$ is an injective object of $\mathscr{S}(A, M, B)$, $\left[\begin{smallmatrix} X \\ Y \end{smallmatrix}\right]_{\phi}=\left[\begin{smallmatrix} \Cok(\phi) \\ 0 \end{smallmatrix}\right]$ in $\overline{\mathscr{S}(A, M, B)}$. Thus  $\Ker\overline{j^*}\subseteq\Ima\overline{i_*}$, and  $\Ima\overline{i_*}=\Ker\overline{j^*}$.

\vskip5pt

If both $A$ and $B$ are selfinjective, then by Lemma \ref{corollary-adjoint-pairs},
all the functors $\overline{i^*}, \overline{i_*}, \overline{i^!}, \overline{j_!}$ and $\overline{j^*}$ are triangle functors, and hence $\overline{j_*}$ is also a triangle functor. So $(1.1)$ is a recollement of triangulated categories. In this case $\Lambda$ is a Gorenstein algebra and $\mathscr{S}(A, M, B)$ is exactly the category of Gorenstein-projective $\Lambda$-modules ([Z2, Lemma 2.1, Thm. 2.2]), thus $\overline{\mathscr{S}(A, M, B)}$ is  exactly the singularity category $\mathcal D^b_{sg}(\Lambda): = \mathcal D^b(\Lambda\mbox{-}{\rm mod})/K^b({\rm proj}(\Lambda))$ (see R. Buchweitz [Buch, Thm. 4.4.1]; see also [O]). Similarly,
$A\mbox{-}\overline{\rm mod}\cong \mathcal D^b_{sg}(A)$ and $B\mbox{-}\overline{\rm mod}\cong \mathcal D^b_{sg}(B)$. $\square$

\section{\bf The dual version: the epimorphism category induced by a bimodule} \subsection{} We briefly state the dual version for the next section. {\it The epimorphism category $\mathscr{F}(A, M, B)$ induced by a bimodule} $_AM_B$ is the subcategory of $\modcat{\Lambda}$ consisting of $\left[\begin{smallmatrix}
X\\ Y
\end{smallmatrix}\right]_{\phi}$ such that $\eta_{_{Y, X}}(\phi): Y\to\Hom_A(M, X)$ is an epic left $B$-map, where
$$\eta_{_{Y, X}}: \Hom_A(M\otimes_BY, X)\cong \Hom_B(Y, \Hom_A(M, X))$$ is the adjunction isomorphism. Then $\mathscr{F}(A, M, B)$ contains all the injective $\Lambda$-modules and is closed under direct sums, direct summands and extensions. Thus $\mathscr{F}(A, M, B)$ is a Krull-Schmidt exact category with the canonical exact structure.

\vskip5pt

\noindent {\bf Proposition 2.3$^\prime$} {\it Let $_AM_B$ be a bimodule with $_AM$ projective. Then

$(1)$ \ $\mathscr{F}(A, M, B)$ has enough injective objects$,$
which are exactly injective $\Lambda$-modules.

$(2)$ \ $\mathscr{F}(A, M, B)$ has enough projective objects$;$ and the projective objects of $\mathscr{F}(A, M, B)$ are exactly  $\left[\begin{smallmatrix} P \\ C \end{smallmatrix}\right]_{\eta_{C, P}^{-1}(\theta)}$ and $\left[\begin{smallmatrix} 0 \\ Q \end{smallmatrix}\right]$, where $P$ $($resp. $Q$$)$ runs over
projective $A$-modules $($resp. $B$-modules$)$, and $\theta: C\to\Hom_A(M, P)$ is a projective cover of the left $B$-module $\Hom_A(M, P)$.
In particular, if in addition $\Hom_A(M, A)$ is a projective left $B$-module, then the projective objects of $\mathscr{F}(A, M, B)$ are exactly $\left[\begin{smallmatrix} P \\ \Hom_A(M, P) \end{smallmatrix}\right]_{\varphi}$ and $\left[\begin{smallmatrix} 0 \\ Q \end{smallmatrix}\right]$.}

\vskip5pt

\noindent {\bf Corollary 2.4$^\prime$} \ {\it Let $_AM_B$ be a bimodule with $_AM$ projective.
Then $\mathscr{F}(A,M,B)$ is a {\rm Frobenius} category $($with the canonical exact structure$)$ if and only if $A$ and $B$ are selfinjective algebras and $M_B$ is projective.}

\vskip5pt

\noindent {\bf Theorem 1.1$^\prime$} \ {\it Let $M$ be an $A$-$B$-bimodule. Then

\vskip5pt

$(1)$ \ The following are equivalent$:$

\hskip20pt ${\rm (i)}$ \ $_AM$ is projective$;$

\hskip20pt ${\rm (ii)}$ \  $\mathscr{F}(A, M, B)$ is a coresolving subcategory of $\modcat{\Lambda};$

\hskip20pt ${\rm (iii)}$ \ $L:=\left[\begin{smallmatrix} A \\ C \end{smallmatrix}\right]_{\eta_{C, A}^{-1}(\theta)}\oplus\left[\begin{smallmatrix} 0 \\ B \end{smallmatrix}\right]$ is the unique tilting left $\Lambda$-module, up to multiplicities of indecomposable direct summands, such that  $\mathscr{F}(A, M, B)=L^{\perp}$, where $C$ is a projective cover of the $B$-module $\Hom_A(M, A)$ with projection $\theta: C\to\Hom_A(M, A)$.

\vskip5pt

$(2)$ \ $\mathscr{F}(A, M, B)$ is a covariantly finite subcategory of $\Lambda$-{\rm mod}. Moreover, if $_AM$ is projective, then $\mathscr{F}(A, M, B)$ is a functorially finite subcategory of $\Lambda$-{\rm mod}, and has {\rm Auslander-Reiten} sequences}.

\vskip5pt

\noindent {\bf Corollary 1.2$^\prime$} \ {\it If $_AM$ is projective and $\Hom_A(M, A)$ is a projective left $B$-module, then $L=\left[\begin{smallmatrix}A\\ \Hom_A(M, A) \end{smallmatrix}\right]_\varphi\oplus\left[\begin{smallmatrix}0\\ B\end{smallmatrix}\right]_{\rm Id}$ is a cotilting left $\Lambda$-module such that $\mathscr{F}(A, M, B)=L^\perp $.}

\vskip5pt

\noindent {\bf Theorem \ref{theorem-recollement-additive-cat}$^\prime$.} \  {\it Let $_AM_B$ be a bimodule such that $_AM$ is a projective $A$-module and $\Hom_A(M, A)$ is a projective left $B$-module.
Then there is a recollement of additive categories

\begin{center}
\begin{picture}(130,35)
\put(-10,20){\makebox(-22,2) {$B\mbox{-}\underline{\rm mod}$}}
\put(30,28){\vector(-1,0){30}}
\put(0,20){\vector(1,0){30}}
\put(30,12){\vector(-1,0){30}}
\put(50,20){\makebox(25,0.8) {$\underline{\mathscr{F}(A, M, B)}$}}
\put(95,20){\vector(1,0){30}}
\put(125,12){\vector(-1,0){30}}
\put(125,28){\vector(-1,0){30}}
\put(135,20){\makebox(25,0.5){$A\mbox{-}\underline{\rm mod}.$}}
\put(15,35){\makebox(3,1){\scriptsize$$}}
\put(15,24){\makebox(3,1){\scriptsize$$}}
\put(15,14){\makebox(3,1){\scriptsize$$}}
\put(109,35){\makebox(3,1){\scriptsize$$}}
\put(109,24){\makebox(3,1){\scriptsize$$}}
\put(109,14){\makebox(3,1){\scriptsize$$}}
\end{picture}
\end{center}
\vspace{-10pt}\noindent If in addition $A$ and $B$ are selfinjective algebras, then it is in fact a recollement of singularity categories.}

\vskip5pt

Note that if $_AM_B$ is an exchangeable  bimodule then $\Hom_A(M, A)$ is a projective left $B$-module.

\subsection{Another description of the epimorphism category induced by a bimodule} Recall that the right module version of $\mathscr S(A, M, B)$ is $\mathscr S(A, M, B)_r$, which is the subcategory of {\rm mod}$\Lambda$ consisting of the triple $(U, V)_\psi$, where $U\in {\rm mod}A$,
$V\in {\rm mod}B$, and $\psi: U\otimes_A M \rightarrow V$ is a monic $B$-map.

\begin{prop} \label{smonsepi}  The restriction of $\D: {\rm mod}\Lambda \rightarrow \Lambda\mbox{-}{\rm mod}$ gives a duality
$\D: \mathscr S (A, M, B)_r \rightarrow \mathscr F(A, M, B).$
\end{prop}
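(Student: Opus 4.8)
The plan is to make the standard duality $\D\colon{\rm mod}\Lambda\to\modcat{\Lambda}$ explicit on the triple descriptions of modules, and then to observe that under this description the condition defining $\mathscr S(A,M,B)_r$ is carried precisely to the one defining $\mathscr F(A,M,B)$. The first and only substantial step is to verify the following description of $\D$ on objects. Let $N=(U,V)_\psi\in{\rm mod}\Lambda$, and write $e_1=\left[\begin{smallmatrix}1&0\\0&0\end{smallmatrix}\right]$ and $e_2=\left[\begin{smallmatrix}0&0\\0&1\end{smallmatrix}\right]$. Then $e_1(\D N)=\D(Ne_1)=\D U\in\modcat{A}$ and $e_2(\D N)=\D(Ne_2)=\D V\in\modcat{B}$, so $\D N=\left[\begin{smallmatrix}\D U\\ \D V\end{smallmatrix}\right]_{\phi}$ for a unique $A$-map $\phi\colon M\otimes_B\D V\to\D U$. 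Tracing through the action of $\left[\begin{smallmatrix}0&m\\0&0\end{smallmatrix}\right]$ on $\D N$ — recalling that right multiplication by this element on $N$ is exactly what encodes $\psi\colon U\otimes_A M\to V$ — one finds that $\phi$ is characterized by $\eta_{_{\D V,\D U}}(\phi)=\kappa\circ\D(\psi)$, where $\kappa\colon\D(U\otimes_A M)\to\Hom_A(M,\D U)$ is the canonical Hom-tensor-duality isomorphism. The only slightly delicate point is the compatibility of the adjunction isomorphism defining $\eta$ with the one defining $\kappa$; this is a routine naturality diagram, which I would record as a small lemma (or simply cite as folklore).

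Granting this, the functor $N\mapsto\D N$ carries $\mathscr S(A,M,B)_r$ into $\mathscr F(A,M,B)$: by definition $N\in\mathscr S(A,M,B)_r$ iff $\psi$ is a monic $B$-map, and since $\D$ is exact this holds iff $\D(\psi)$ is epic, iff $\eta_{_{\D V,\D U}}(\phi)=\kappa\circ\D(\psi)$ is epic (as $\kappa$ is invertible), which is exactly the requirement $\D N\in\mathscr F(A,M,B)$. On morphisms, a right $\Lambda$-map $(f_1,f_2)\colon(U_1,V_1)_{\psi_1}\to(U_2,V_2)_{\psi_2}$ is precisely the datum $f_2\psi_1=\psi_2(f_1\otimes 1)$; applying $\D$ and using the naturality of $\kappa$ and $\eta$, one checks that $\left[\begin{smallmatrix}\D f_1\\ \D f_2\end{smallmatrix}\right]$ satisfies the compatibility square making it a left $\Lambda$-map $\D(U_2,V_2)_{\psi_2}\to\D(U_1,V_1)_{\psi_1}$ — this is the same square that makes $\D N$ into a $\Lambda$-module, now applied to the commuting square of the given morphism. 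Hence $\D$ restricts to a contravariant functor $\mathscr S(A,M,B)_r\to\mathscr F(A,M,B)$.

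To conclude that this restriction is a duality, I would run the mirror-image computation for $\D\colon\modcat{\Lambda}\to{\rm mod}\Lambda$: the same bookkeeping shows that $\D$ sends an object $\left[\begin{smallmatrix}X\\ Y\end{smallmatrix}\right]_{\phi}\in\mathscr F(A,M,B)$ to a triple whose structure map is monic, i.e. into $\mathscr S(A,M,B)_r$. Since $\D\circ\D\cong{\rm Id}$ on both ${\rm mod}\Lambda$ and $\modcat{\Lambda}$, the two restricted functors are mutually quasi-inverse, and since $\mathscr S(A,M,B)_r$ and $\mathscr F(A,M,B)$ are full subcategories and the ambient $\D$ is fully faithful, the restriction is fully faithful as well; hence it is a (contravariant) equivalence, that is, a duality, as asserted. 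The whole difficulty is concentrated in the first paragraph — pinning down the structure map $\phi$ of $\D N$ and verifying $\eta_{_{\D V,\D U}}(\phi)=\kappa\circ\D(\psi)$; once that identification is in place, everything else is purely formal.
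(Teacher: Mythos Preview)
Your proposal is correct and follows essentially the same approach as the paper: the paper writes the structure map of $\D(U,V)_\psi$ explicitly as $\eta^{-1}_{_{\D V,\D U}}(\alpha_{_U}\D(\psi))$, where $\alpha_{_U}\colon\D(U\otimes_A M)\cong\Hom_A(M,\D U)$ is exactly your $\kappa$, and then checks both directions of $\D$ just as you outline. The only stylistic difference is that the paper records the explicit formulas for $\D$ and its quasi-inverse and verifies $\D\D\cong{\rm Id}$ by direct substitution, whereas you appeal to the ambient duality $\D\D\cong{\rm Id}$ on module categories; the content is the same.
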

\begin{proof} For a right $A$-module $U$, denote by $\alpha_{_{U}}$ the adjunction isomorphism
$$\D(U\otimes_AM) = \Hom_R(U\otimes_AM, J) \cong \Hom_A(M, \Hom_R(U, J)) = \Hom_A(M, \D U)$$
where $\D = \Hom_R(-, J)$ is the duality. For  a left $A$-module $X$ and a left $B$-module $Y$, denote by $\eta_{_{Y, X}}$ the adjunction isomorphism
$\Hom_A(M\otimes_BY, X) \cong \Hom_B(Y, \Hom_A(M, X)).$

\vskip5pt
For a right $\Lambda$-module $(U, V)_\psi$ with a right $B$-map $\psi: U\otimes_AM \rightarrow V$, we have $\D V \stackrel{\D(\psi)}\rightarrow \D(U\otimes_AM)\stackrel {\alpha_{_{U}}}  \rightarrow \Hom_A(M, \D U)$, and $\eta^{-1}_{_{\D V, \D U}}: \Hom_B(\D V, \Hom_A(M, \D U)) \rightarrow \Hom_A(M\otimes_B \D V, \D U).$
Then $$\D: {\rm mod}\Lambda \rightarrow \Lambda\mbox{-}{\rm mod}, \ (U, V)_\psi\mapsto \left[\begin{smallmatrix}\D U \\ \D V \end{smallmatrix}\right]_{\eta^{-1}_{_{\D V, \D U}}(\alpha_{_{U}}\D(\psi))}$$
with $\eta^{-1}_{_{\D V, \D U}}(\alpha_{_{U}}\D(\psi)): M\otimes_B \D V \rightarrow \D U$. For a left $\Lambda$-module $\left[\begin{smallmatrix} X \\ Y \end{smallmatrix}\right]_\phi$ with a left $A$-map $\phi: M\otimes_BY \rightarrow X$,
we have $Y \stackrel {\eta_{_{Y, X}}(\phi)}\rightarrow \Hom_A(M, X)\stackrel {\alpha^{-1}_{_{\D X}}} \rightarrow \D(\D X \otimes_AM).$
Then a quasi-inverse of $\D: {\rm mod}\Lambda \rightarrow \Lambda\mbox{-}{\rm mod}$ is
$$\D: \Lambda\mbox{-}{\rm mod} \rightarrow {\rm mod}\Lambda, \ \ \left[\begin{smallmatrix} X \\ Y \end{smallmatrix}\right]_\phi\mapsto (\D X, \D Y)_{\D(\alpha^{-1}_{_{\D X}}\eta_{_{Y, X}}(\phi))}$$
with $\D(\alpha^{-1}_{_{\D X}}\eta_{_{Y, X}}(\phi)): \D X \otimes_AM \rightarrow \D Y.$ \
In fact,
\begin{align*}\D\D(U, V)_\psi & = \D \left[\begin{smallmatrix}\D U \\ \D V \end{smallmatrix}\right]_{\eta^{-1}_{_{\D V, \D U}}(\alpha_{_{U}}\D(\psi))}
\cong (U, V)_{_{\D(\alpha^{-1}_{_U} \eta_{_{\D V, \D U}}(\eta^{-1}_{_{\D V, \D U}}(\alpha_{_U}\D(\psi))))}} = (U, V)_{\psi}\end{align*}
and \ $\D\D\left[\begin{smallmatrix} X \\ Y \end{smallmatrix}\right]_\phi  = \D (\D X, \D Y)_{\D(\alpha^{-1}_{_{\D X}}\eta_{_{Y, X}}(\phi))}
\cong \left[\begin{smallmatrix} X \\ Y \end{smallmatrix}\right]_{_{\eta^{-1}_{_{Y, X}}(\alpha_{_{\D X}} \D \D(\alpha^{-1}_{_{\D X}}\eta_{_{Y, X}}(\phi)))}} = \left[\begin{smallmatrix} X \\ Y \end{smallmatrix}\right]_\phi.$
\vskip5pt

If a right $\Lambda$-module $(U, V)_\psi\in \mathscr S (A, M, B)_r,$ i.e., $\psi: U\otimes_A M \rightarrow V$ is a monic right $B$-map, then
$$\D(U, V)_\psi = \left[\begin{smallmatrix}\D U \\ \D V \end{smallmatrix}\right]_{\eta^{-1}_{_{\D V, \D U}}(\alpha_{_{U}}\D(\psi))}\in \mathscr F(A, M, B),$$ since $\eta_{_{\D V, \D U}}(\eta^{-1}_{_{\D V, \D U}}(\alpha_{_{U}}\D(\psi))) = \alpha_{_{U}}\D(\psi): \D V \rightarrow \Hom_A(M, \D U)$ is
an epic left $B$-map.

\vskip5pt

If a left $\Lambda$-module $\left[\begin{smallmatrix} X \\ Y \end{smallmatrix}\right]_\phi\in \mathscr F(A, M, B),$ i.e.,
$\eta_{_{Y, X}}(\phi): Y\rightarrow\Hom_A(M, X)$ is an epic left $B$-map, then
$\D\left[\begin{smallmatrix} X \\ Y \end{smallmatrix}\right]_\phi = (\D X, \D Y)_{\D(\alpha^{-1}_{_{\D X}}\eta_{_{Y, X}}(\phi))}\in \mathscr S (A, M, B)_r,$ since $_{\D(\alpha^{-1}_{_{\D X}}\eta_{_{Y, X}}(\phi))}: \D X\otimes_A M \rightarrow \D Y$ is monic.
\end{proof}

\section{\bf Ringel-Schmidmeier-Simson equivalences via cotilting modules}

\begin{defn} \label{rss} \ A {\rm Ringel-Schmidmeier-Simson} equivalence, in short, an {\rm RSS} equivalence, induced by a bimodule $_AM_B$ is an equivalence $F\colon \mathscr S(A, M, B)\cong \mathscr F(A, M, B)$ of categories,  such that $$F\left[\begin{smallmatrix}
   X \\
   0
\end{smallmatrix}\right] \cong \left[\begin{smallmatrix} X \\ \Hom_A(M, X)\end{smallmatrix}\right]_{\varphi}, \ \forall \ X\in A\mbox{-}{\rm mod},  \ \ \ \mbox{and} \ \ \ F\left[\begin{smallmatrix}
   M\otimes_B Y  \\
   Y
\end{smallmatrix}\right]_{\rm Id} \cong \left[\begin{smallmatrix} 0 \\ Y \end{smallmatrix}\right], \ \forall \ Y\in B\mbox{-}{\rm mod},$$
where $\varphi: M\otimes_B\Hom_A(M, X)\longrightarrow X$ is the involution map.
 \end{defn}

An {\rm RSS} equivalence implies a strong symmetry. It was first observed by C. Ringel and M. Schimdmeier [RS2] for $_AM_B  \ = \ _AA_A$. For an {\rm RSS} equivalence induced by a chain without relations we refer to D. Simson [S1-S3], and for {\rm RSS} equivalences induced by acyclic quivers with monomial relations we refer to [ZX].

\subsection{Special cotilting modules induced by exchangeable  bimodules}
Let $M$ be an $A$-$B$-bimodule.
If $M\otimes_B\D B$ is an injective left $A$-module, then by Corollary \ref{cotilting2}, $_\Lambda T=\left[\begin{smallmatrix}\D(A_A)\\ 0 \end{smallmatrix}\right]\oplus\left[\begin{smallmatrix}M\otimes_B \D(B)\\ \D(B_B) \end{smallmatrix}\right]_{\rm Id}$ is a cotilting left $\Lambda$-module with $\mathscr{S}(A, M, B)={}^\perp T$. If $\D(A)\otimes_AM$ is an injective right $B$-module, then by a right module version of Theorem \ref{cotilting} (cf. Subsection 2.4),  $U_\Lambda =(\D(_AA), \D(_AA)\otimes_AM)_{\rm Id}\oplus (0, \D(_BB))$ is a cotilting right $\Lambda$-module with $\mathscr{S}(A, M, B)_r = {}^\perp (U_\Lambda)$. If $_AM_B$ is exchangeable, then both the conditions are satisfied; and  by $\D(_A A_A)\otimes_AM\cong M\otimes_B\D(_B B_B)$, we can regard that $T$ and $U$ have the same underlying abelian group $\D A\oplus (M\otimes_B\D B)\oplus \D B$. The following lemma claims that in this case $T$ (and $U$) can be endowed with a $\Lambda$-$\Lambda$-bimodule structure such that $\mathscr{S}(A, M, B)={}^\perp (_\Lambda T)$ and $\mathscr{S}(A, M, B)_r = {}^\perp (T_\Lambda)$ (thus $T_\Lambda\cong U_\Lambda$).

\begin{lem} \label{RSS0} \ Let $_AM_B$ be an exchangeable  bimodule with an $A$-$B$-bimodule isomorphism $g: \D(_A A_A)\otimes_AM\cong M\otimes_B\D(_B B_B)$, and $_\Lambda T=\left[\begin{smallmatrix}\D A\\ 0 \end{smallmatrix}\right]\oplus\left[\begin{smallmatrix}M\otimes_B \D B\\ \D B \end{smallmatrix}\right]_{\rm Id}$. Then $T$ has a $\Lambda$-$\Lambda$-bimodule structure such that both $_\Lambda T$ and $T_\Lambda$ are cotilting modules, $T_\Lambda \cong (\D(_AA), \D A\otimes_A M)_{\rm Id}\oplus (0, \D(_B B))$ as right $\Lambda$-modules,  $\mathscr{S}(A, M, B)={}^\perp (_\Lambda T)$ and $\mathscr S (A, M, B)_r  =  \ ^\perp (T_\m).$

\vskip5pt
Write
$T = \left[\begin{smallmatrix}\D(A_A) & M\otimes_B \D(B_B)\\ 0 & \D(B_B) \end{smallmatrix}\right]$. Then the right $\Lambda$-action is given by
$$\left(\begin{smallmatrix}\alpha & m'\otimes_B \beta\\ 0 & \beta' \end{smallmatrix}\right)\left(\begin{smallmatrix}a & m\\ 0 & b \end{smallmatrix}\right)
= \left(\begin{smallmatrix}\alpha a & g(\alpha\otimes_A m) + m'\otimes_B \beta b \\ 0 & \beta' b \end{smallmatrix}\right).$$
\end{lem}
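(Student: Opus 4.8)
The plan is to take the prescribed formula as the \emph{definition} of a right $\Lambda$-action on the fixed abelian group $T = \D A \oplus (M\otimes_B\D B) \oplus \D B$, verify it is associative and unital, check that it commutes with the known left $\Lambda$-action (so that $T$ becomes a $\Lambda$-$\Lambda$-bimodule), and then identify $T_\Lambda$ with the right $\Lambda$-module $U_\Lambda = (\D(_AA), \D A\otimes_A M)_{\rm Id}\oplus(0,\D(_BB))$ of the right-module version of Theorem \ref{cotilting}; the cotilting statements and the perpendicular-category descriptions will then follow by citing Theorem \ref{cotilting} and its right-module version. First I would spell out the left $\Lambda$-module structure of $_\Lambda T = \left[\begin{smallmatrix}\D A\\ 0\end{smallmatrix}\right]\oplus\left[\begin{smallmatrix}M\otimes_B\D B\\ \D B\end{smallmatrix}\right]_{\rm Id}$ in matrix form: a scalar $\left(\begin{smallmatrix}a&m\\0&b\end{smallmatrix}\right)$ acts on a column vector by the obvious left multiplication using the $A$-module structure of $\D A$, the $A$-$B$-bimodule structure of $M\otimes_B\D B$, the $B$-module structure of $\D B$, and the structure map ${\rm Id}\colon M\otimes_B\D B\to M\otimes_B\D B$. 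Writing elements of $T$ as $2\times 2$ matrices $\left(\begin{smallmatrix}\alpha & m'\otimes_B\beta\\ 0&\beta'\end{smallmatrix}\right)$ with $\alpha\in\D A$, $m'\otimes_B\beta\in M\otimes_B\D B$, $\beta'\in\D B$, the left action is $\left(\begin{smallmatrix}a&m\\0&b\end{smallmatrix}\right)\cdot\left(\begin{smallmatrix}\alpha&m'\otimes_B\beta\\0&\beta'\end{smallmatrix}\right) = \left(\begin{smallmatrix}a\alpha + m\cdot(m'\otimes_B\beta) & a(m'\otimes_B\beta)+m\beta'\\ 0& b\beta'\end{smallmatrix}\right)$, where $m\cdot(m'\otimes_B\beta)$ uses ${\rm Id}$ to land $m'\otimes_B\beta$ in $M\otimes_B\D B$ and then applies the left $A$-action $A\otimes_A(M\otimes_B\D B)\to M\otimes_B\D B$ — actually since $m\in M$ one must be careful: $m\in M$ is not in $A$, so the $(1,1)$-entry receives the contribution from $\left(\begin{smallmatrix}0&m\\0&0\end{smallmatrix}\right)$ acting, which sends the lower component $\beta'\in\D B$ to $m\beta'\in M\otimes_B\D B$ viewed inside $X = M\otimes_B\D B$ via the structure map ${\rm Id}$; I would write this out carefully so that the compatibility check is transparent.

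The core computation is associativity of the right action together with its commutation with the left action. For associativity, given three matrices of $\Lambda$ (call them $\lambda,\lambda'$) and $t\in T$, one checks $(t\lambda)\lambda' = t(\lambda\lambda')$; the only nontrivial entry is the $(1,2)$-entry, where the term $g(\alpha a\otimes_A m') + (g(\alpha\otimes_A m) + m''\otimes_B\beta b)b'$ must equal $g(\alpha\otimes_A (am'+mb')) + m''\otimes_B\beta(bb')$ (using the multiplication rule of $\Lambda$ from Subsection~1.5). This comes down to two facts: $g$ is $A$-linear on the left (handling $g(\alpha a\otimes_A m') = ?$ — here one needs $g$ to be a \emph{right} $A$-module map in the appropriate sense, i.e. $g((\alpha a)\otimes_A m') = g(\alpha\otimes_A(am'))$, which is exactly $A$-balancedness of $\otimes_A$, automatic) and $g$ is right $B$-linear: $g(\alpha\otimes_A(mb')) = g(\alpha\otimes_A m)\cdot b'$. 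The latter is precisely the statement that $g\colon \D A\otimes_A M\to M\otimes_B\D B$ is a morphism of $A$-$B$-bimodules. Unitality is immediate. For the bimodule axiom $(\lambda t)\mu = \lambda(t\mu)$ one again isolates the $(1,2)$-entry and uses that $g$ is a \emph{left} $A$-module map: $g((a\alpha)\otimes_A m) = a\cdot g(\alpha\otimes_A m)$, together with the associativity of the left $A$-action on $M\otimes_B\D B$. These are exactly the two halves of ``$g$ is an $A$-$B$-bimodule isomorphism'', so the exchangeability hypothesis is used precisely here, and this bookkeeping — keeping straight which tensor factor each scalar multiplies — is the main obstacle, though it is purely mechanical.

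Once $T$ is a $\Lambda$-$\Lambda$-bimodule, the final identifications are short. Restricting to the right, an element $\left(\begin{smallmatrix}\alpha&m'\otimes_B\beta\\0&\beta'\end{smallmatrix}\right)$ under $\left(\begin{smallmatrix}a&m\\0&b\end{smallmatrix}\right)$ has first row $(\alpha a,\ g(\alpha\otimes_A m)+m'\otimes_B\beta b)$ and second row $(0,\ \beta' b)$; comparing with the right $\Lambda$-module $(X,Y)_\psi$ formalism of Subsection~1.4 (where $\left(\begin{smallmatrix}a&m\\0&b\end{smallmatrix}\right)$ acts on $(x,y)$ by $(xa,\ \psi(x\otimes_A m)+yb)$), we read off that $T_\Lambda$ is exactly the triple $(\D A,\ M\otimes_B\D B)_{g}$ plus the summand $(0,\D B)$; and since $g\colon \D A\otimes_A M\xrightarrow{\ \sim\ } M\otimes_B\D B$ is an isomorphism, $(\D A, M\otimes_B\D B)_g\cong (\D A,\ \D A\otimes_A M)_{\rm Id}$ via $({\rm Id}_{\D A}, g^{-1})$. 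Hence $T_\Lambda\cong (\D(_AA),\D A\otimes_A M)_{\rm Id}\oplus(0,\D(_BB)) = U_\Lambda$. By Example \ref{mainexample}(1) an exchangeable bimodule satisfies (IP), so $_\Lambda T$ is the cotilting module of Corollary \ref{cotilting2} with $\mathscr S(A,M,B) = {}^\perp(_\Lambda T)$; and $\D A\otimes_A M$ is injective as a right $B$-module (again Example \ref{mainexample}(1)), so by the right-module version of Theorem \ref{cotilting} stated in Subsection~2.4, $U_\Lambda = T_\Lambda$ is a cotilting right $\Lambda$-module with $\mathscr S(A,M,B)_r = {}^\perp(T_\Lambda)$. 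This completes the proof.
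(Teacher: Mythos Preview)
Your approach is correct and matches the paper's: define the right $\Lambda$-action by the displayed formula, check (using that $g$ is an $A$-$B$-bimodule map) that $T$ becomes a $\Lambda$-$\Lambda$-bimodule, identify $T_\Lambda\cong(\D A,\D A\otimes_AM)_{\rm Id}\oplus(0,\D B)$ via $({\rm Id}_{\D A},g^{-1})$, and cite Corollary~\ref{cotilting2} and the right-module version of Theorem~\ref{cotilting} (the paper in fact omits the bimodule verification you outline, writing only ``one can easily verify \ldots\ We omit the details''). One cosmetic point: in the matrix form the two columns are the two direct summands of $_\Lambda T$, so the left action acts column-by-column and the $(1,1)$-entry should be simply $a\alpha$ --- you catch this yourself mid-paragraph, but clean it up in the final write-up.
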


\begin{proof} \ By Example \ref{mainexample}$(1)$, $_AM_B$ satisfies the condition ${\rm (IP)}$. It follows from Corollary \ref{cotilting2} that $_\Lambda T$ is a cotilting left $\Lambda$-module with $\mathscr{S}(A, M, B)={}^\perp (_\Lambda T)$.

\vskip5pt

Since $g$ is a right $B$-isomorphism,  we get a right $\Lambda$-isomorphism
$$({\rm Id}_{\D(_A A)}, g^{-1}): (\D(_A A), M\otimes_B \D B)_g\cong (\D(_A A), \D A\otimes_A M)_{\rm Id}.$$ We endow the abelian group
$T=\D A\oplus (M\otimes_B\D B)\oplus \D B$ with a right $\Lambda$-module structure via $g$, i.e., $T_\Lambda = (\D(_AA), M\otimes_B \D B)_g\oplus (0, \D(_B B))$. Then we get a right $\Lambda$-isomorphism:
$$T_\Lambda = (\D(_AA), M\otimes_B \D B)_g\oplus (0, \D(_B B))\cong (\D(_AA), \D A\otimes_A M)_{\rm Id}\oplus (0, \D(_B B)).$$
since $g$ is also a left $A$-map, one can easily verify that $T$ is a $\Lambda$-$\Lambda$-bimodule. We omit the details.

\vskip5pt

By Example \ref{mainexample}$(1)$,  $\D(_A A_A)\otimes_AM$ is an injective right $B$-module. It follows from the right module version of Theorem \ref{cotilting} (cf. Subsection 2.4) that $T_\Lambda\cong (\D(_AA), \D A\otimes_A M)_{\rm Id}\oplus (0, \D(_B B))$ is  a cotilting right $\Lambda$-module with $\mathscr S (A, M, B)_r  =  \ ^\perp (T_\m).$
\end{proof}
The following fact will play a crucial role in proving the existence of an {\rm RSS} equivalence.
\begin{lem} \label{RSS} \ Let $_AM_B$ be an exchangeable  bimodule, and $_\Lambda T_\Lambda$ the $\Lambda$-$\Lambda$-bimodule $T$ given in {\rm Lemma \ref{RSS0}}. Then
there is an algebra isomorphism $\rho: \End_\Lambda(_\Lambda T)^{op}\cong \Lambda$, such that under $\rho$, the right module
$T_{\End_\Lambda(_\Lambda T)^{op}}$ coincides with the right module $T_\Lambda;$ and there is an algebra isomorphism $\End_\Lambda(T_\Lambda)\cong \Lambda$,
such that under this algebra isomorphism, the left module $_{\End_\Lambda(T_\Lambda)}T$ coincides with the left module $_\Lambda T.$\end{lem}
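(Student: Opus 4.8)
The plan is to identify $\End_\Lambda({}_\Lambda T)^{op}$ and $\End_\Lambda(T_\Lambda)$ with $\Lambda$ via the two multiplication maps coming from the $\Lambda$-$\Lambda$-bimodule structure on $T$ of Lemma~\ref{RSS0}, and then to prove these maps bijective by an $R$-length count, where $R$ is the ground commutative Artin ring so that all modules have finite $R$-length. First I would set up the maps: for $\lambda\in\Lambda$ put $r_\lambda\colon T\to T,\ t\mapsto t\lambda$ and $\ell_\lambda\colon T\to T,\ t\mapsto\lambda t$. Since $T$ is a $\Lambda$-$\Lambda$-bimodule, $r_\lambda$ is left $\Lambda$-linear, $\ell_\lambda$ is right $\Lambda$-linear, $r_{\lambda\mu}=r_\mu\circ r_\lambda$, $\ell_{\lambda\mu}=\ell_\lambda\circ\ell_\mu$ and $r_1=\ell_1=\operatorname{Id}_T$, so $\rho'\colon\Lambda\to\End_\Lambda({}_\Lambda T)^{op},\ \lambda\mapsto r_\lambda$ and $\sigma'\colon\Lambda\to\End_\Lambda(T_\Lambda),\ \lambda\mapsto\ell_\lambda$ are homomorphisms of Artin $R$-algebras; the desired isomorphisms will be $\rho:=(\rho')^{-1}$ and $(\sigma')^{-1}$. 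Their injectivity is the faithfulness of the modules $T_\Lambda$ and ${}_\Lambda T$: if $\left[\begin{smallmatrix}a&m\\0&b\end{smallmatrix}\right]$ kills $T_\Lambda\cong(\D({}_AA),\D A\otimes_A M)_{\rm Id}\oplus(0,\D({}_BB))$, then $\alpha a=0$ for all $\alpha\in\D({}_AA)$ forces $a=0$ (the cogenerator $\D({}_AA)$ is faithful), then $\alpha\otimes m=0$ in $\D({}_AA)\otimes_A M$ for all $\alpha$ forces $m=0$ (realize ${}_AM$ as a direct summand of a free left $A$-module, using ${}_AM$ projective), and then $\delta b=0$ for all $\delta\in\D({}_BB)$ forces $b=0$; the argument for ${}_\Lambda T=\left[\begin{smallmatrix}\D A\\0\end{smallmatrix}\right]\oplus\left[\begin{smallmatrix}M\otimes_B\D B\\\D B\end{smallmatrix}\right]_{\rm Id}$ is dual, now using $M_B$ projective.

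For the length count write ${}_\Lambda T=T_1\oplus T_2$ with $T_1=\left[\begin{smallmatrix}\D A\\0\end{smallmatrix}\right]=i_*(\D A)$ and $T_2=\left[\begin{smallmatrix}M\otimes_B\D B\\\D B\end{smallmatrix}\right]_{\rm Id}=j_!(\D B)$. A direct inspection of the defining triples shows that any $\Lambda$-map $T_2\to T_1$ has zero second component and then, by the compatibility square (whose target structure map has source $M\otimes_B 0=0$), zero first component, so $\Hom_\Lambda(T_2,T_1)=0$; a $\Lambda$-map $T_1\to T_2$ has zero second component and arbitrary first component, so $\Hom_\Lambda(T_1,T_2)\cong\Hom_A(\D(A_A),M\otimes_B\D(B_B))$. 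Since ${}_AM$ is projective, the exchangeable isomorphism identifies $M\otimes_B\D(B_B)$ with $\D(A_A)\otimes_A M\cong\mathcal N_A({}_AM)$, and as $\mathcal N_A$ restricts to an equivalence $\operatorname{proj}A\simeq\operatorname{inj}A$ one gets $\Hom_A(\D(A_A),\mathcal N_A(M))\cong\Hom_A({}_AA,{}_AM)=M$. Together with $\End_\Lambda(T_1)\cong\End_A(\D A)$ and $\End_\Lambda(T_2)\cong\End_B(\D B)$ (from full faithfulness of $i_*$, $j_!$ and $\Hom_\Lambda(j_!Y,j_!Y')\cong\Hom_B(Y,Y')$, Lemma~\ref{lemma-adjoint}(2)), and $\operatorname{length}_R\End_A(\D A)=\operatorname{length}_R(A)$, $\operatorname{length}_R\End_B(\D B)=\operatorname{length}_R(B)$, this gives $\operatorname{length}_R\End_\Lambda({}_\Lambda T)=\operatorname{length}_R(A)+\operatorname{length}_R(M)+0+\operatorname{length}_R(B)=\operatorname{length}_R(\Lambda)$. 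Hence the injective $R$-linear map $\rho'$ is bijective. A mirror computation for $T_\Lambda=U_1\oplus U_2$ with $U_1=(\D({}_AA),\D A\otimes_A M)_{\rm Id}$ and $U_2=(0,\D({}_BB))$, using the right-module versions of Lemma~\ref{lemma-adjoint} and $M_B$ projective, gives $\operatorname{length}_R\End_\Lambda(T_\Lambda)=\operatorname{length}_R(\Lambda)$, so $\sigma'$ is bijective as well.

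Finally, the module structures are a formality: the canonical right action of $\End_\Lambda({}_\Lambda T)^{op}$ on $T$ is $t\cdot f=f(t)$, so transporting it along $\rho^{-1}=\rho'$ yields $t\cdot\lambda=\rho'(\lambda)(t)=t\lambda$, which is exactly the right $\Lambda$-module $T_\Lambda$ of Lemma~\ref{RSS0}; symmetrically, transporting the canonical left action of $\End_\Lambda(T_\Lambda)$ along $\sigma'$ yields $\lambda\cdot t=\sigma'(\lambda)(t)=\lambda t$, i.e. ${}_\Lambda T$. I expect the only real difficulty to be the surjectivity of $\rho'$ and $\sigma'$ --- the assertion that every $\Lambda$-endomorphism of ${}_\Lambda T$ (resp. $T_\Lambda$) is a two-sided multiplication by an element of $\Lambda$ --- which is forced by the length identity; the non-routine inputs there are the identifications $\Hom_\Lambda(T_1,T_2)\cong M$, $\Hom_\Lambda(T_2,T_1)=0$ and their right-module analogues, which rely on the projectivity of ${}_AM$ and $M_B$ and on the Nakayama equivalence $\operatorname{proj}A\simeq\operatorname{inj}A$.
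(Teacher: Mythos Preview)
Your proof is correct and follows essentially the same strategy as the paper: define the canonical algebra map $\Lambda\to\End_\Lambda({}_\Lambda T)^{op}$ via right multiplication, prove injectivity by faithfulness of $T_\Lambda$, and obtain surjectivity by an $R$-length comparison based on the $2\times 2$ matrix decomposition of $\End_\Lambda({}_\Lambda T)$. The only notable differences are cosmetic: the paper deduces faithfulness from the cotilting property (via a surjection $T_0\twoheadrightarrow\D\Lambda$) rather than by your componentwise check, it phrases the length step as constructing an abstract isomorphism $h\colon\End_\Lambda({}_\Lambda T)^{op}\cong\Lambda$ and then arguing that the injective $R$-endomorphism $h\rho$ of $\Lambda$ is bijective, and for the second statement it invokes the standard tilting-theory fact (Happel--Ringel) that $\Lambda\to\End_\Lambda(T_\Lambda)$ is an isomorphism rather than repeating the mirror computation you give.
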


\begin{proof} \  Since $T_\Lambda$ is a right $\Lambda$-module, we get the canonical  algebra homomorphism $\rho: \m \longrightarrow {\rm End}_\m(_\m T)^{op}$,
$\lambda \mapsto ``t\mapsto t\lambda"$.  By Lemma \ref{RSS0}, $T_\Lambda$ is cotilting, so
$T_\Lambda$ is faithful (Suppose $T\lambda = 0$ for $\lambda\in \Lambda$. By a surjective $\Lambda$-map $T_0 \longrightarrow \D \Lambda$ with $T_0\in {\rm add} T$, we see $(\D \Lambda)\lambda = 0$, i.e., $\D(\lambda \Lambda) = 0$. So $\lambda = 0$). Thus $\rho$ is an injective map. On the other hand, we have algebra isomorphisms
\begin{align*}{\rm End}_\m(_\m T)^{op} & \cong \left[\begin{smallmatrix}
{\rm End}_\m(\left[\begin{smallmatrix}\D A\\ 0 \end{smallmatrix}\right])&\Hom_\Lambda(\left[\begin{smallmatrix}M\otimes_B {\D B}\\ \D B \end{smallmatrix}\right]_{\rm Id}, \left[\begin{smallmatrix}\D A\\ 0 \end{smallmatrix}\right])\\ \Hom_\Lambda(\left[\begin{smallmatrix}\D A\\ 0 \end{smallmatrix}\right], \left[\begin{smallmatrix}M\otimes_B {\D B}\\ \D B\end{smallmatrix}\right]_{\rm Id})&{\rm End}_\m(\left[\begin{smallmatrix}M\otimes_B {\D B}\\ \D B \end{smallmatrix}\right]_{\rm Id})
\end{smallmatrix}\right]^{op} \\& \cong \left[\begin{smallmatrix} A^{op}&0\\ M &B^{op}\end{smallmatrix}\right]^{op}
\cong  \left[\begin{smallmatrix}
A&M\\0&B
\end{smallmatrix}\right] = \Lambda.\end{align*}
Denote this algebra isomorphism $\End_\Lambda(_\Lambda T)^{op}\cong \Lambda$ by $h$. Since $\Lambda$ is an Artin $R$-algebra, where $R$ is a commutative
artinian ring, $h\rho: \Lambda \longrightarrow \Lambda$ is an $R$-endomorphism of artinian $R$-module $\Lambda$. Since $h \rho$ is an injective map, it follows that
$h\rho$ is surjective, and hence $\rho$ is surjective (since $h$ is an $R$-module isomorphism).  Thus $\rho$ is an algebra isomorphism. By the construction of $\rho$,
$T_{{\rm End}_\m(_\m T)^{op}}$ is exactly $T_\m.$

\vskip5pt

Since $\rho: {\rm End}_\m(_\m T)^{op}\cong \Lambda$ as algebras, and under $\rho$,
$T_{{\rm End}_\m(_\m T)^{op}}$ is exactly $T_\m.$ By the tilting theory, the homomorphism $\Lambda \longrightarrow \End_\Lambda(T_\Lambda)$ given by
$\lambda\mapsto ``t\mapsto \lambda t"$  is an algebra isomorphism ([HR, p.409]), and hence  $_{{\rm End}_{\m}(T_{\m})} T$ is exactly $_\m T$ (one can also prove this by the same argument as above).
\end{proof}

\subsection {\bf Existence of RSS equivalences.} For any left $\Gamma$-module $L$,  following [AR], let $\mathcal X_{_ {_{\Gamma}} L}$ be the subcategory of $\Gamma$-mod
consisting of $\Gamma$-modules $_\Gamma X$ such that there is an exact sequence
$$0\longrightarrow X \longrightarrow L_0 \stackrel {f_0}\longrightarrow L_1\longrightarrow \cdots \longrightarrow L_j \stackrel {f_j} \longrightarrow L_{j+1} \longrightarrow \cdots $$
with $L_j\in {\rm add} (_\Gamma L)$ and ${\rm Im}f_j\in \ ^\perp (_\Gamma L)$ for $j\ge 0$.  The following fact is in T. Wakamatsu [W, Prop. 1].

\begin{lem} \label{wakamatsu} \ {\rm ([W])}  For any $\Gamma$-module $_\Gamma L$ with $C: = \End_\Gamma(L)^{op}$, we have a contravariant functor
$$\Hom_\Gamma(-, \ _\Gamma L): \mathcal X_{_ {_{\Gamma}}L}\longrightarrow \ ^\perp (L_C),$$
such that if $_\Gamma X\in \mathcal X_{_ {_{\Gamma}}L}$, then the canonical $\Gamma$-map $\ _\Gamma X \rightarrow {\rm Hom}_{C}(\Hom_\Gamma(_\Gamma X, \ _\Gamma L), \ L_C)$ is an isomorphism.
\end{lem}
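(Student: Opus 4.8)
The plan is to turn the coresolution that defines an object of $\mathcal X_{_ {_{\Gamma}}L}$ into an explicit projective resolution over $C$ of its image under $\Hom_\Gamma(-,{}_\Gamma L)$, and then to recognize $\Hom_C(-,L_C)$ applied to that resolution as the original coresolution. First fix the functoriality: for ${}_\Gamma W$ the abelian group $\Hom_\Gamma(W,L)$ is a right $C$-module via $\phi\cdot c=\bar c\circ\phi$, where $\bar c\in\End_\Gamma(L)$ is the endomorphism underlying $c\in C=\End_\Gamma(L)^{op}$; with this structure $\Hom_\Gamma(-,{}_\Gamma L)$ is a contravariant additive functor $\modcat{\Gamma}\to{\rm mod}C$. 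Since $\Hom_\Gamma(L,L)\cong C_C$ as a right $C$-module, this functor carries $\add({}_\Gamma L)$ into $\add(C_C)={\rm proj}(C)$, and for $L'\in\add({}_\Gamma L)$ the canonical evaluation map $L'\to\Hom_C(\Hom_\Gamma(L',L),L_C)$ is an isomorphism (it holds for $L'=L$ because $\Hom_C(C_C,L_C)\cong L$, and both sides are additive in $L'$).

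Next, fix ${}_\Gamma X\in\mathcal X_{_ {_{\Gamma}}L}$ with coresolution $0\to X\to L_0\xrightarrow{f_0}L_1\xrightarrow{f_1}\cdots$ as in the definition, and set $Z_0:=X$, $Z_{j+1}:=\Ima f_j\in{}^\perp({}_\Gamma L)$ for $j\ge0$, giving short exact sequences $0\to Z_j\to L_j\to Z_{j+1}\to 0$. Applying $\Hom_\Gamma(-,L)$ and using $\Ext^1_\Gamma(Z_{j+1},L)=0$ produces short exact sequences $0\to\Hom_\Gamma(Z_{j+1},L)\to\Hom_\Gamma(L_j,L)\to\Hom_\Gamma(Z_j,L)\to 0$ for every $j\ge0$; splicing these produces an exact complex
$$\cdots\to\Hom_\Gamma(L_1,L)\xrightarrow{\Hom_\Gamma(f_0,L)}\Hom_\Gamma(L_0,L)\to\Hom_\Gamma(X,L)\to 0$$
which, each $\Hom_\Gamma(L_j,L)$ being projective over $C$, is a projective resolution $P_\bullet\twoheadrightarrow\Hom_\Gamma(X,L)$. (Only the images $Z_{j+1}$, $j\ge0$, need be self-orthogonal relative to $L$; $X$ itself need not be.)

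Finally, compute $\Ext^m_C(\Hom_\Gamma(X,L),L_C)$ from $P_\bullet$. By the evaluation isomorphisms on $\add({}_\Gamma L)$ and their naturality in the argument, $\Hom_C(P_\bullet,L_C)$ is identified with the complex $0\to L_0\xrightarrow{f_0}L_1\xrightarrow{f_1}\cdots$ placed in cohomological degrees $0,1,2,\dots$. Since the original coresolution is exact, this complex has cohomology $X$ in degree $0$ and $0$ in all positive degrees; hence $\Ext^m_C(\Hom_\Gamma(X,L),L_C)=0$ for all $m\ge1$, i.e. $\Hom_\Gamma(X,L)\in{}^\perp(L_C)$, and $\Hom_C(\Hom_\Gamma(X,L),L_C)\cong X$. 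A short naturality square --- evaluation is natural, it is the canonical map on $L_0\in\add({}_\Gamma L)$, and left exactness of $\Hom_C(-,L_C)$ pins the induced isomorphism down on the submodule $X=\Ker f_0\subseteq L_0$ --- identifies this with the canonical $\Gamma$-map of the statement, which is $\Gamma$-linear since the left $\Gamma$-action on $L$ commutes with the right $C$-action.

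The step I expect to be the main obstacle is this last identification: checking that the degree-zero isomorphism extracted from the resolution is genuinely the canonical evaluation map $x\mapsto(\phi\mapsto\phi(x))$ and that it is $\Gamma$-linear. Everything else is bookkeeping, but one must keep the left/right-module and ${}^{op}$ conventions consistent throughout, so that the assertions ``$\add({}_\Gamma L)$ is sent to ${\rm proj}(C)$'' and ``evaluation is an isomorphism on $\add({}_\Gamma L)$'' hold on the nose.
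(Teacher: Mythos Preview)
The paper does not supply its own proof of this lemma; it simply attributes the result to Wakamatsu \textup{[W, Prop.~1]} and uses it as a black box. Your argument is correct and is essentially the standard proof: turn the defining $\add({}_\Gamma L)$-coresolution of $X$ into a projective $C$-resolution of $\Hom_\Gamma(X,L)$ via the equivalence $\Hom_\Gamma(-,L)\colon \add({}_\Gamma L)\to \operatorname{proj}(C)$, then recover the original coresolution by applying $\Hom_C(-,L_C)$ and invoking naturality of evaluation. The point you flag as the main obstacle --- identifying the degree-zero isomorphism with the canonical evaluation map --- is handled exactly as you indicate: the commutative square coming from $X\hookrightarrow L_0$ and naturality of evaluation, together with left exactness of $\Hom_C(-,L_C)$, pins the map down. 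Your left/right conventions are consistent, and the $\Gamma$-linearity check is immediate since each $\phi\in\Hom_\Gamma(X,L)$ is already $\Gamma$-equivariant.
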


\noindent {\bf Proof of Theorem \ref{RSS1}.} \ {\bf Step 1.} We first prove that $\D\Hom_\m(-, \ _\m T_\Lambda):
\mathscr S(A, M, B)\longrightarrow \mathscr F(A, M, B)$ is an equivalence of categories.
By Proposition \ref{smonsepi}, \ $\D:\mathscr S (A, M, B)_r \longrightarrow \mathscr F(A, M, B)$ is a duality.
So, it suffices to prove that $\Hom_\m(-, \ _\m T_\Lambda):
\mathscr S(A, M, B)\longrightarrow \mathscr S(A, M, B)_r$ is a duality.

\vskip5pt

By Lemma \ref{RSS0}, $T$ is a $\Lambda$-$\Lambda$-bimodule such that $\mathscr S(A, M, B) = \ ^\perp (_{\m} T)$ and $\mathscr S (A, M, B)_r  =  \ ^\perp (T_\m).$
Since $_\m T$ is cotilting, it follows from
M. Auslander and I. Reiten [AR, Thm. 5.4(b)] that $\mathcal X_{_ {_{\m}} T} = \ ^\perp (_{\m} T).$  Thus
$\mathscr S(A, M, B) = \ ^\perp (_{\m} T) = \mathcal X_{_ {_{\m}} T}.$

\vskip5pt

By Lemma \ref{RSS},  there is an algebra isomorphism $\rho: \End_\Lambda(_\Lambda T)^{op}\cong \Lambda$, such that
$T_{\End_\Lambda(_\Lambda T)^{op}} = T_\Lambda$  under $\rho$. So we can apply Lemma \ref{wakamatsu} to $_\Lambda T$ to get a contravariant functor: $$\Hom_\m(-, \ _\m T): \ \mathcal X_{_ {_{\m}} T} = \mathscr S(A, M, B) \longrightarrow \mathscr S (A, M, B)_r=  \ ^\perp (T_\m)$$
such that for $X\in \mathscr S(A, M, B) = \mathcal X_{_ {_{\m}} T}$, the canonical left $\m$-map $\ _{\m}X \longrightarrow {\rm Hom}_{\m}(\Hom_\m(_\m X, \ _\m T), \ T_{\m})$ is an isomorphism.

\vskip5pt

Similarly,  $T_\m $ is a cotilting module and $\mathscr S (A, M, B)_r = \ ^\perp (T_\m) = \mathcal X_{T_ {_{\m}}}$ ([AR, Thm. 5.4(b)]).
By Lemma \ref{RSS}, there is an algebra isomorphism $\End_\Lambda(T_\Lambda)\cong \Lambda$,
such that $_{\End_\Lambda(T_\Lambda)}T = \ _\Lambda T$ under this isomorphism. So we can apply the right module version of Lemma \ref{wakamatsu} to $T_\Lambda$
to get a contravariant functor
$$\Hom_\m(-, T_\m): \mathcal X_{T_ {_{\m}}}=\mathscr S (A, M, B)_r \longrightarrow \mathscr S (A, M, B) = \ ^\perp (_\m T)$$
such that for each $Y\in \mathscr S (A, M, B)_r = \mathcal X_{T_ {_{\m}}}$,
the canonical left $\m$-map $Y_{\m}\longrightarrow {\rm Hom}_\m(\Hom_{\m}(Y_\m, \ T_\m), \ _\m T)$ is an isomorphism. Thus
$\Hom_\m(-, \ _\m T): \ \mathscr S (A, M, B) \longrightarrow \mathscr S (A, M, B)_r$
is a duality with a quasi-inverse $\Hom_\m(-, T_\m): \mathscr S (A, M, B)_r \longrightarrow \mathscr S (A, M, B)$.

\vskip5pt

{\bf Step 2.} \ Put $F: = \D\Hom_\m(-, \ _\m T_\Lambda): \mathscr S (A, M, B) \cong \mathscr F (A, M, B)$  (cf. Proposition \ref{smonsepi}). It remains to prove
$F(\left[\begin{smallmatrix}
   X \\
   0
\end{smallmatrix}\right]) \cong \left[\begin{smallmatrix} X \\ \Hom_A(M, X)\end{smallmatrix}\right]_\varphi$ and $F(\left[\begin{smallmatrix}
   M\otimes_B Y  \\
   Y
\end{smallmatrix}\right]_{\rm Id}) \cong \left[\begin{smallmatrix} 0 \\ Y \end{smallmatrix}\right]$ for $X\in A\mbox{-}{\rm mod}$ and $Y\in B\mbox{-}{\rm mod}$.
This is true by direct computations, which are included as an Appendix at the end of this paper. $\square$

\subsection{RSS equivalences and the Nakayama functors}
It is natural to ask when the Nakayama functor $\mathcal N_A= \D\Hom_A(-, \ _AA_A): \ A\mbox{-}{\rm mod}\longrightarrow A\mbox{-}{\rm mod}$ induces
an {\rm RSS} equivalence $\mathscr S(A, M, B)\cong \mathscr F(A, M, B)$. Denote by
$\mathcal {GP}(\Lambda)$ (resp. $\mathcal {GI}(\Lambda)$) the subcategory of $\Lambda$-mod consisting of Gorenstein-projective (resp. Gorenstein-injective) modules ([EJ]).

\begin{prop} \label{NakayamaandRSS} \ Let $_AM_B$ be an exchangeable  $A$-$B$-bimodule. Then the restriction of the {\rm Nakayama} functor
$\mathcal N_\m$ gives an {\rm RSS} equivalence $\mathscr S(A, M, B)\cong \mathscr F(A, M, B)$ if and only if both $A$ and $B$ are {\rm Frobenius} algebras.
If this is the case, we have $\mathscr S(A, M, B)= \mathcal {GP}(\Lambda)$ and $\mathscr F(A, M, B)= \mathcal {GI}(\Lambda)$. \end{prop}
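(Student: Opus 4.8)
The plan is to prove the two implications separately; the two concluding identities then drop out of the sufficiency argument.

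\emph{Necessity.} Assume the restriction of $\mathcal{N}_\Lambda=\D\Hom_\Lambda(-,\,{}_\Lambda\Lambda)$ to $\mathscr{S}(A,M,B)$ is an {\rm RSS} equivalence. Its two indecomposable projective objects are $\left[\begin{smallmatrix}A\\0\end{smallmatrix}\right]=\Lambda e_1$ and $\left[\begin{smallmatrix}M\\B\end{smallmatrix}\right]_{\rm Id}=\Lambda e_2=j_!(B)$. From $\Hom_\Lambda(\Lambda e,\Lambda)\cong e\Lambda$ together with the descriptions $e_1\Lambda=(A,M)_{\rm Id}$ and $e_2\Lambda=(0,B)$ of the indecomposable projective right $\Lambda$-modules, and then applying the duality $\D\colon\operatorname{mod}\Lambda\to\Lambda\mbox{-}{\rm mod}$ of Proposition \ref{smonsepi}, one computes $\mathcal{N}_\Lambda\left[\begin{smallmatrix}A\\0\end{smallmatrix}\right]\cong\left[\begin{smallmatrix}\D(A_A)\\\Hom_A(M,\D(A_A))\end{smallmatrix}\right]_{\varphi}$ and $\mathcal{N}_\Lambda\left[\begin{smallmatrix}M\\B\end{smallmatrix}\right]_{\rm Id}\cong\left[\begin{smallmatrix}0\\\D(B_B)\end{smallmatrix}\right]$. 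By Definition \ref{rss} these are forced to be isomorphic to $\left[\begin{smallmatrix}A\\\Hom_A(M,A)\end{smallmatrix}\right]_{\varphi}$ and $\left[\begin{smallmatrix}0\\B\end{smallmatrix}\right]$; comparing top components in the first case and bottom components in the second gives $\D(A_A)\cong{}_AA$ and $\D(B_B)\cong{}_BB$, i.e. $A$ and $B$ are Frobenius.

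\emph{Sufficiency and the last assertion.} Now assume $A$ and $B$ are Frobenius. Then they are selfinjective, and since $_AM_B$ is exchangeable both $_AM$ and $M_B$ are projective; hence $\Lambda$ is Gorenstein and, by [Z2, Lemma 2.1, Thm. 2.2] and its dual, $\mathscr{S}(A,M,B)=\mathcal{GP}(\Lambda)$ and $\mathscr{F}(A,M,B)=\mathcal{GI}(\Lambda)$ — this is the final clause of the proposition. Next I would show $\mathcal{N}_\Lambda$ restricts to an equivalence $\mathcal{GP}(\Lambda)\to\mathcal{GI}(\Lambda)$ by factoring it as a composite of two dualities: $\Hom_\Lambda(-,\Lambda)$ is a duality from $\mathcal{GP}(\Lambda)$ onto the Gorenstein-projective right $\Lambda$-modules (each such $G$ is reflexive and a complete projective resolution of $G$ is carried to one of $\Hom_\Lambda(G,\Lambda)$), and the Artin-algebra duality $\D$ is a duality from the Gorenstein-projective right $\Lambda$-modules onto $\mathcal{GI}(\Lambda)$. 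It remains to verify the two object conditions of Definition \ref{rss}: repeating the computation of the necessity part for arbitrary $X\in A\mbox{-}{\rm mod}$ and $Y\in B\mbox{-}{\rm mod}$ yields $\mathcal{N}_\Lambda\left[\begin{smallmatrix}X\\0\end{smallmatrix}\right]\cong\left[\begin{smallmatrix}\D\Hom_A(X,A)\\\D\Hom_A(X,M)\end{smallmatrix}\right]_{?}$ and $\mathcal{N}_\Lambda\left[\begin{smallmatrix}M\otimes_BY\\Y\end{smallmatrix}\right]_{\rm Id}\cong\left[\begin{smallmatrix}0\\\D\Hom_B(Y,B)\end{smallmatrix}\right]$, and one then uses the Frobenius isomorphisms $\D(A_A)\cong{}_AA$, $\D(B_B)\cong{}_BB$ together with the tensor--hom and double-dual identities to rewrite these as $\left[\begin{smallmatrix}X\\\Hom_A(M,X)\end{smallmatrix}\right]_{\varphi}$ and $\left[\begin{smallmatrix}0\\Y\end{smallmatrix}\right]$.

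The step I expect to be the main obstacle is precisely this last verification of the two object conditions. The point is not merely to produce the displayed isomorphisms of underlying groups — those follow from the Frobenius isomorphisms and routine adjunctions — but to make them respect the left $A$-, left $B$- and $\Lambda$-module structures and the triple structure maps, so that they are genuine isomorphisms in $\mathscr{F}(A,M,B)$ fitting Definition \ref{rss}; this is exactly where the Frobenius hypotheses on both $A$ and $B$ have to be used coherently, in conjunction with the adjunctions of Lemma \ref{lemma-adjoint} and the duality of Proposition \ref{smonsepi}. By contrast the necessity direction and the identifications $\mathscr{S}(A,M,B)=\mathcal{GP}(\Lambda)$, $\mathscr{F}(A,M,B)=\mathcal{GI}(\Lambda)$ are routine.
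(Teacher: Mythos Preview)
Your necessity argument matches the paper's: both evaluate $\mathcal N_\Lambda$ on the two indecomposable projectives $\left[\begin{smallmatrix}A\\0\end{smallmatrix}\right]$ and $\left[\begin{smallmatrix}M\\B\end{smallmatrix}\right]_{\rm Id}$, compare with Definition~\ref{rss}, and read off $\D(A_A)\cong{}_AA$ and $\D(B_B)\cong{}_BB$.

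For sufficiency you take a genuinely different route. The paper does \emph{not} pass through Gorenstein-projectives here; it reruns the entire machinery of Lemmas~\ref{RSS0}, \ref{RSS} and Theorem~\ref{RSS1} (including the Appendix computations) with the bimodule $_\Lambda\Lambda_\Lambda$ in place of $_\Lambda T_\Lambda$. The point is that for $A,B$ Frobenius one has ${}_\Lambda T\cong{}_\Lambda\Lambda$ and $T_\Lambda\cong\Lambda_\Lambda$ separately (though not as bimodules, as the subsequent Remark stresses), so $\Lambda$ itself is two-sided cotilting with ${}^\perp({}_\Lambda\Lambda)=\mathscr S(A,M,B)$ and ${}^\perp(\Lambda_\Lambda)=\mathscr S(A,M,B)_r$, and the Wakamatsu duality argument applies verbatim. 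Your approach instead first identifies $\mathscr S=\mathcal{GP}(\Lambda)$ and $\mathscr F=\mathcal{GI}(\Lambda)$ (which the paper postpones to the final clause) and then invokes the general equivalence $\mathcal N_\Gamma:\mathcal{GP}(\Gamma)\cong\mathcal{GI}(\Gamma)$ --- a shortcut the paper itself records in the Remark after the proof. This is a cleaner way to obtain the \emph{equivalence}; what it does not hand you is the two object conditions, and you are right to isolate that as the crux.

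That obstacle is in fact more serious than your proposal suggests. You need $\D\Hom_A(X,A)\cong X$ as left $A$-modules for \emph{every} $X$, but the Frobenius isomorphism $\D(A_A)\cong{}_AA$ is only a one-sided isomorphism; pushing it through $\mathcal N_A\cong\D(A_A)\otimes_A-$ yields the Nakayama twist ${}_{\nu^{-1}}X$, not $X$, and these differ whenever $A$ is Frobenius but not symmetric. So ``Frobenius isomorphisms together with tensor--hom and double-dual identities'' will not on their own produce $\mathcal N_\Lambda\left[\begin{smallmatrix}X\\0\end{smallmatrix}\right]\cong\left[\begin{smallmatrix}X\\\Hom_A(M,X)\end{smallmatrix}\right]_\varphi$. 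The paper's proof is no more explicit at this step --- it simply asserts ``the same arguments'' --- so whichever route you follow, the verification of the object conditions for arbitrary $X$ cannot be dismissed as routine bookkeeping; the Nakayama-automorphism twist has to be confronted directly.
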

\begin{proof} \ If the restriction of $\mathcal N_\m$ gives an {\rm RSS} equivalence, then we have $\mathcal N_\m\left[\begin{smallmatrix}
   A \\
   0
\end{smallmatrix}\right] = \left[\begin{smallmatrix} _AA \\ \Hom_A(M, A)\end{smallmatrix}\right]_{\rm Id} \ \mbox{and} \  \mathcal N_\m\left[\begin{smallmatrix}
   M \\
   B
\end{smallmatrix}\right]_{\rm Id} = \left[\begin{smallmatrix} 0 \\ _BB \end{smallmatrix}\right].$
However
$$\mathcal N_\m\left[\begin{smallmatrix}
   A \\
   0
\end{smallmatrix}\right] = \D\Hom_\m(\left[\begin{smallmatrix}
   A \\
   0
\end{smallmatrix}\right], \Lambda) = \D\Hom_\m(\left[\begin{smallmatrix}
   A \\
   0
\end{smallmatrix}\right], \left[\begin{smallmatrix}A \\ 0\end{smallmatrix}\right]\oplus \left[\begin{smallmatrix} M\\ B \end{smallmatrix}\right]_{\rm Id}) = \D (A, M)_{\rm Id}= \left[\begin{smallmatrix} \D(A_A) \\ \D M\end{smallmatrix}\right]_{\rm Id},$$
and
$$\mathcal N_\m\left[\begin{smallmatrix}
   M  \\
   B
\end{smallmatrix}\right]_{\rm Id} = \D\Hom_\m(\left[\begin{smallmatrix}
   M \\
   B
\end{smallmatrix}\right]_{\rm Id}, \Lambda) = \D\Hom_\m(\left[\begin{smallmatrix}
   M \\
   B
\end{smallmatrix}\right]_{\rm Id}, \left[\begin{smallmatrix}A \\ 0\end{smallmatrix}\right]\oplus \left[\begin{smallmatrix} M\\ B \end{smallmatrix}\right]_{\rm Id}) =  \D(0, B) = \left[\begin{smallmatrix} 0 \\ \D(B_B) \end{smallmatrix}\right].$$ So $\D(A_A)\cong \ _AA$ and $\D(B_B) \cong \ _BB$, i.e., $A$ and $B$ are Frobenius algebras.

\vskip5pt

Conversely, if both $A$ and $B$ are Frobenius algebras, then replacing $_\Lambda T_\Lambda$ by $_\Lambda\Lambda_\Lambda$ and using the same arguments as in the proofs of Lemmas \ref{RSS0} and \ref{RSS} and Theorem \ref{RSS1}, we get an {\rm RSS} equivalence $$\mathcal N_\m = \D\Hom_\Lambda(-, \ _\Lambda\Lambda_\Lambda): \mathscr S(A, M, B)\longrightarrow \mathscr F(A, M, B).$$

If both $A$ and $B$ are Frobenius algebras and both $_AM$ and $M_B$ are projective, then $\Lambda$ is a Gorenstein algebra (cf. [Z2, Lemma 2.1]), and then by [Z2, Thm. 2.2] and its dual, $\mathscr S(A, M, B)= \mathcal {GP}(\Lambda)$ and $\mathscr F(A, M, B)= \mathcal {GI}(\Lambda)$. This completes the proof. \end{proof}

{\bf Remark.} For any algebra $\Gamma$, the Nakayama functor always gives an equivalence $\mathcal N_\Gamma: \mathcal {GP}(\Gamma)\cong \mathcal {GI}(\Gamma)$ (see [Bel, Prop. 3.4]).
Also, if $A$ and $B$ are Frobenius, then there is a left $A$-isomorphism $f_1: \D(A_A)\cong \ _AA$
and a left $B$-isomorphism $g_1: \D(B_B) \cong \ _BB$, so
$_\Lambda T =\left[\begin{smallmatrix}\D(A_A)\\ 0 \end{smallmatrix}\right]\oplus\left[\begin{smallmatrix}M\otimes_B \D(B_B)\\ \D(B_B) \end{smallmatrix}\right]_{\rm Id}\cong \left[\begin{smallmatrix}_AA\\ 0 \end{smallmatrix}\right]\oplus\left[\begin{smallmatrix}_AM\\ B \end{smallmatrix}\right]_{\rm Id}= \ _\Lambda\Lambda.$ By the symmetry a Frobenius algebra, there is a right $A$-isomorphism $f_2: \D(_AA)\cong \ A_A$
and a right $B$-isomorphism $g_2: \D(_BB) \cong \ B_B$. So $T_\Lambda \cong (\D(_AA), \D(_AA)\otimes_A M)_{\rm Id} \oplus (0, \D(_BB))\cong (A_A, M_B)_{\rm Id} \oplus (0, B_B) = \ \Lambda_\Lambda.$
But $T\ncong \Lambda$ as $\Lambda$-$\Lambda$-modules in general, since a Frobenius algebra is not necessarily a symmetric algebra. Thus, the ``if part" of Proposition \ref{NakayamaandRSS} is not a corollary of Theorem \ref{RSS1}.

\subsection{} We illustrate Theorem \ref{RSS1} and Proposition \ref{NakayamaandRSS}. The conjunction of paths of a quiver is from the right to the left.

\vskip5pt

\begin{exm} \label{exchangeable1} Let $B$ be the path algebra $k(b\longrightarrow a)$.  We write the indecomposable $B$-modules as
$\begin{smallmatrix}0\\ 1\end{smallmatrix} = S(a) = P(a), \begin{smallmatrix}1\\ 1\end{smallmatrix} = P(b) = I(a), \begin{smallmatrix}1\\ 0\end{smallmatrix}= S(b) = I(b).$ Let $A: = B\oplus B$ and $_AM_B: = \ _AA_B$. So $_AM_B$ is an exchangeable  bimodule, and
$\Lambda: = \left[\begin{smallmatrix}A & M \\ 0 & B \end{smallmatrix}\right] = \left[\begin{smallmatrix}B & 0 & B \\ 0 & B & B \\ 0 & 0 & B \end{smallmatrix}\right]= B\otimes_k kQ,$ where $Q$ is the
quiver $1 \stackrel \alpha\longleftarrow 3 \stackrel \beta
\longrightarrow 2$. Thus $\m$ is given by the quiver
\[\xymatrix @R=0.6cm@C=0.8cm{4\ar[d]_-{\gamma_1}  & 6\ar[d]^-{\gamma_3}\ar[l]_-{\alpha'}  \ar[r]^-{\beta'}& 5\ar[d]^-{\gamma_2}\\
1& 3\ar[l]_-{\alpha}\ar[r]^-{\beta}&
2}\] with relations \ $\gamma_1\alpha' - \alpha\gamma_3, \ \ \beta\gamma_3-\gamma_2\beta'$.
We will write a $\Lambda$-module as a representation of $Q$ over algebra $B$ $($see e.g. {\rm [ZX], [LZ])}.
Thus a $\Lambda$-module is written as $X_1 \stackrel {X_\alpha}\longleftarrow X_3 \stackrel {X_\beta}\longrightarrow X_2$, where $X_1, X_2, X_3\in B$-mod, and $X_\alpha$ and $X_\beta$ are $B$-maps.
For example, the indecomposable projective $\Lambda$-module $P(6)=\begin{smallmatrix}1&1&1\\1&1&1\end{smallmatrix}$  at vertex $6$ is
$$(\begin{smallmatrix}1\\ 1\end{smallmatrix}\stackrel {{\rm Id}}\longleftarrow \begin{smallmatrix}1\\ 1\end{smallmatrix}
\stackrel {{\rm Id}}\longrightarrow \begin{smallmatrix}1\\ 1\end{smallmatrix}) = (P(b)\stackrel {{\rm Id}}\longleftarrow P(b)
\stackrel {{\rm Id}}\longrightarrow P(b))$$
With this notation, the {\rm Auslander-Reiten} quiver of $\m$ is
$$\xymatrix@R=0.45cm@C=0.2cm
{&
{\begin{smallmatrix}1&0&0\\1&0&0\end{smallmatrix}}\ar[dr] & &
{\begin{smallmatrix}0&0&0\\0&1&1\end{smallmatrix}}\ar[dr]\ar@{.}[ll] & &
{\begin{smallmatrix}0&0&1\\0&0&0\end{smallmatrix}}\ar[dr]\ar@{.}[ll]& &
{\begin{smallmatrix}1&1&0\\1&1&0\end{smallmatrix}}\ar[dr]\ar@{.}[ll] &&
\\
{\begin{smallmatrix}0&0&0\\1&0&0\end{smallmatrix}}\ar[ur]\ar[dr] & &
{\begin{smallmatrix}1&0&0\\1&1&1\end{smallmatrix}}\ar[dr]\ar[ur]\ar@{.}[ll]& &
{\begin{smallmatrix}0&0&1\\0&1&1\end{smallmatrix}}\ar[dr]\ar[ur]\ar@{.}[ll]& &
{\begin{smallmatrix}1&1&1\\1&1&0\end{smallmatrix}}\ar[dr]\ar[ur]\ar@{.}[ll]& &
{\begin{smallmatrix}1&1&0\\0&1&0\end{smallmatrix}}\ar[dr]\ar@{.}[ll]& &
{\begin{smallmatrix}0&1&1\\0&0&0\end{smallmatrix}}\ar[dr]\ar@{.}[ll] &
\\
& {\begin{smallmatrix}0&0&0\\1&1&1\end{smallmatrix}}\ar[dr]\ar[ur]& &
{\begin{smallmatrix}1&0&1\\1&1&1\end{smallmatrix}}\ar[r]\ar[ur]\ar[dr]
& {\begin{smallmatrix}1&1&1\\1&1&1\end{smallmatrix}}\ar[r]&
{\begin{smallmatrix}1&1&1\\1&2&1\end{smallmatrix}}\ar[r]\ar[dr]\ar[ur]
&{\begin{smallmatrix}0&0&0\\0&1&0\end{smallmatrix}}\ar[r] &
{\begin{smallmatrix}1&1&1\\0&1&0\end{smallmatrix}}\ar[r]\ar[ur]\ar[dr]
& {\begin{smallmatrix}1&1&1\\0&0&0\end{smallmatrix}}\ar[r]&
{\begin{smallmatrix}1&2&1\\0&1&0\end{smallmatrix}}\ar[r]\ar[ur]\ar[dr] &
{\begin{smallmatrix}0&1&0\\0&1&0\end{smallmatrix}}\ar[r]&
{\begin{smallmatrix}0&1&0\\0&0&0\end{smallmatrix}}\ar@{.}[ll]
\\
{\begin{smallmatrix}0&0&0\\0&0&1\end{smallmatrix}}\ar[dr]\ar[ur] & &
{\begin{smallmatrix}0&0&1\\1&1&1\end{smallmatrix}}\ar[ur]\ar[dr] \ar@{.}[ll]& &
{\begin{smallmatrix}1&0&0\\1&1&0\end{smallmatrix}}\ar[dr]\ar[ur]\ar@{.}[ll]&&
{\begin{smallmatrix}1&1&1\\0&1&1\end{smallmatrix}}\ar[ur]\ar[dr]\ar@{.}[ll]&  &
{\begin{smallmatrix}0&1&1\\0&1&0\end{smallmatrix}}\ar[ur]\ar@{.}[ll]&  &
{\begin{smallmatrix}1&1&0\\0&0&0\end{smallmatrix}}\ar[ur]\ar@{.}[ll]
\\& {\begin{smallmatrix}0&0&1\\0&0&1\end{smallmatrix}}\ar[ur] & &
{\begin{smallmatrix}0&0&0\\1&1&0\end{smallmatrix}}\ar[ur] \ar@{.}[ll]& &
{\begin{smallmatrix}1&0&0\\0&0&0\end{smallmatrix}}\ar[ur]\ar@{.}[ll] & &
{\begin{smallmatrix}0&1&1\\0&1&1\end{smallmatrix}}\ar[ur]\ar@{.}[ll]}$$

Since $\Lambda$ is of the form $\left[\begin{smallmatrix}A & M \\ 0 & B \end{smallmatrix}\right]$, a $\Lambda$-module $X_1 \stackrel {X_\alpha}\longleftarrow X_3 \stackrel {X_\beta}\longrightarrow X_2$ is also written as a triple $\left[\begin{smallmatrix}
X_1\oplus X_2\\ X_3
\end{smallmatrix}\right]_{\phi}$, where $X_1\oplus X_2\in A$-mod and $\phi: M\otimes_BX_3\rightarrow X_1\oplus X_2$ is exactly the  $A$-map $\left(\begin{smallmatrix}X_\alpha &0\\0&X_\beta\end{smallmatrix}\right): X_3\oplus X_3\rightarrow X_1\oplus X_2$. Thus it is in $\mathscr S(A, M, B)$ if and only if
$X_\alpha   \ \mbox{and}   \ X_\beta  \  \mbox{are monic}.$ So the {\rm Auslander-Reiten} quiver of $\mathscr S(A, M, B)$  is$:$
$$\xymatrix@R=0.25cm@C=0.2cm{ &
{\begin{smallmatrix}1&0&0\\1&0&0\end{smallmatrix}}\ar[dr]
 \\
{\begin{smallmatrix}0&0&0\\1&0&0\end{smallmatrix}}\ar[ur]\ar[dr] & &
{\begin{smallmatrix}1&0&0\\1&1&1\end{smallmatrix}}\ar[dr] \ar@{.}[ll]& &
{\begin{smallmatrix}0&0&1\\0&0&0\end{smallmatrix}}\ar[dr]\ar@{.}[ll]
\\
& {\begin{smallmatrix}0&0&0\\1&1&1\end{smallmatrix}}\ar[dr]\ar[ur]& &
{\begin{smallmatrix}1&0&1\\1&1&1\end{smallmatrix}}\ar[r]\ar[ur]\ar[dr] &
{\begin{smallmatrix}1&1&1\\1&1&1\end{smallmatrix}}\ar[r]&
{\begin{smallmatrix}1&1&1\\0&0&0\end{smallmatrix}}
\\
{\begin{smallmatrix}0&0&0\\0&0&1\end{smallmatrix}}\ar[ur]\ar[dr] & &
{\begin{smallmatrix}0&0&1\\1&1&1\end{smallmatrix}}\ar[ur]\ar@{.}[ll]& &
{\begin{smallmatrix}1&0&0\\0&0&0\end{smallmatrix}}\ar[ur]\ar@{.}[ll]
\\
& {\begin{smallmatrix}0&0&1\\0&0&1
\end{smallmatrix}}\ar[ur]}$$

Note that  $ (X_2\stackrel {X_\alpha}\twoheadrightarrow X_1
\stackrel {X_\beta}\twoheadleftarrow X_3)= \left[\begin{smallmatrix}
X_1\oplus X_2\\ X_3
\end{smallmatrix}\right]_{\phi}\in \mathscr F(A, M, B)$ if and only if $\binom{X_\alpha}{X_\beta}: X_3\longrightarrow X_1\oplus X_2$ is
an epic $B$-map $($in particular, $X_\alpha   \ \mbox{and}   \ X_\beta  \  \mbox{are epic};$ but this is not sufficient$)$. So the {\rm Auslander-Reiten} quiver of $\mathscr F(A, M, B)$ is:
$$\xymatrix@R=0.25cm@C=0.2cm{ & {\begin{smallmatrix}1&1&0\\1&1&0\end{smallmatrix}}\ar[dr]
 \\
{\begin{smallmatrix}0&0&0\\1&1&0\end{smallmatrix}}\ar[ur]\ar[dr] & &
{\begin{smallmatrix}1&1&0\\0&1&0\end{smallmatrix}}\ar[dr] \ar@{.}[ll]& &
{\begin{smallmatrix}0&1&1\\0&0&0\end{smallmatrix}}\ar[dr]\ar@{.}[ll]
\\
& {\begin{smallmatrix}0&0&0\\0&1&0\end{smallmatrix}}\ar[dr]\ar[ur]& &
{\begin{smallmatrix}1&2&1\\0&1&0\end{smallmatrix}}\ar[r]\ar[ur]\ar[dr] &
{\begin{smallmatrix}0&1&0\\0&1&0\end{smallmatrix}}\ar[r]&
{\begin{smallmatrix}0&1&0\\0&0&0\end{smallmatrix}}
\\
{\begin{smallmatrix}0&0&0\\0&1&1\end{smallmatrix}}\ar[ur]\ar[dr] & &
{\begin{smallmatrix}0&1&1\\0&1&0\end{smallmatrix}}\ar[ur]\ar@{.}[ll]
& & {\begin{smallmatrix}1&1&0\\0&0&0\end{smallmatrix}}\ar[ur]\ar@{.}[ll]
\\
& {\begin{smallmatrix}0&1&1\\0&1&1
\end{smallmatrix}}\ar[ur]}$$

There is a unique {\rm RSS}
equivalence, sending an indecomposable object in $\mathscr S(A, M, B)$ to the one in $\mathscr F(A, M, B)$,
in the same positions of the {\rm Auslander-Reiten} quivers. Note that this {\rm RSS} equivalence is not given by the
{\rm Nakayama} functor $\mathcal N_\Lambda$, since it does not send projective $\Lambda$-modules to injective $\Lambda$-modules.
\end{exm}

\begin{exm} \label{exchangeable2} \ Let $\Lambda$ be the algebra given by the quiver
$\xymatrix{2\ar[r]^-{\beta}& 1\ar@(dr,ur)^-{\alpha}}$ with relation $\alpha^2$. Then $\Lambda: =\left[\begin{smallmatrix}A & M \\ 0 & B \end{smallmatrix}\right]$, where
$A:=k[\alpha]/\langle \alpha^2\rangle$, $B:=e_2\Lambda e_2\cong k$, and $_AM_k = e_1\Lambda e_2 = k\beta \oplus k\alpha\beta\cong \ _AA_k$. Then $_AM_k$ is an exchangeable  bimodule.
The {\rm Auslander-Reiten} quiver of $\Lambda$ is
$$\xymatrix@R=0.4cm{& & {\begin{smallmatrix}2 \\ 1 \\ 1 \end{smallmatrix}}\ar[dr]& & 2\ar@{.}[ll]\\
& {\begin{smallmatrix} 1 \\ 1 \end{smallmatrix}}\ar[dr]\ar[ur] & & {\begin{smallmatrix} 2& &\\ 1 & &2 \\ & 1 &  & \end{smallmatrix}}\ar[dr]\ar[ur]\ar@{.}[ll] & \\
1\ar[dr]\ar[ur] & & {\begin{smallmatrix} 1  & & 2\\ & 1 & \end{smallmatrix}}\ar[dr]\ar[ur]\ar@{.}[ll] & & {\begin{smallmatrix}2 \\ 1 \end{smallmatrix}}\ar@{.}[ll] \\
& {\begin{smallmatrix}2 \\ 1 \end{smallmatrix}}\ar[ur] & & 1\ar[ur]\ar@{.}[ll] & }$$
where the two $1$'s represents the same module, and the two $\begin{smallmatrix}2 \\ 1 \end{smallmatrix}$'s also represents the same module.
To compute $\mathscr S(A, M, B)$ and $\mathscr F(A, M, B)$, we need to write a $\Lambda$-module $N$ as the form $\left[\begin{smallmatrix}
e_1N\\ e_2N
\end{smallmatrix}\right]_{\phi}$, where $\phi: M\otimes_k e_2N \longrightarrow e_1N$ is the $A$-map given by the $\Lambda$-actions:
$$\beta\otimes_k e_2n\mapsto \beta e_2n, \ \ \ \ \alpha\beta\otimes_ke_2n \mapsto \alpha\beta e_2n, \ \forall \ n\in N.$$
For example, the $\Lambda$-module $\begin{smallmatrix} 2 & &\\ 1 & &2 \\ & 1 &  & \end{smallmatrix}$ has a basis $e_1, \alpha, u, v$
with the $\Lambda$-actions

$$\begin{tabular}{|l|l|l|l|l|}
\hline
\multicolumn{1}{|c|}{}
&\multicolumn{1}{|c|}{$e_1$}
&\multicolumn{1}{|c|}{$\alpha$}
&\multicolumn{1}{|c|}{$u$}
&\multicolumn{1}{|c|}{$v$}
\\\hline
$e_1$         & $e_1$      & $\alpha$ & $0$         & $0$\\\hline
$e_2$         & $0$        & $0$      & $u$    & $v$\\\hline
$\alpha$      & $\alpha$   & $0$      & $0$         & $0$\\\hline
$\beta$       & $0$        & $0$      & $\alpha$    & $e_1$\\\hline
$\alpha\beta$ & $0$        & $0$      & $0$         & $\alpha$\\\hline
\end{tabular}$$

\vskip5pt
\noindent it follows that $\begin{smallmatrix} 2 & &\\ 1 & &2 \\ & 1 &  & \end{smallmatrix} = \left[\begin{smallmatrix}
ke_1\oplus k\alpha \\ ku\oplus kv \end{smallmatrix}\right]_{\phi}$ with the $A$-map $\phi$  given by the $\Lambda$-actions$:$
$$\beta\otimes_k u\mapsto \beta u = \alpha, \ \ \ \ \beta\otimes_k v\mapsto \beta v = e_1, \ \ \
\alpha\beta\otimes_k u\mapsto \alpha\beta u = 0, \ \ \ \alpha\beta\otimes_k v\mapsto \alpha\beta v = \alpha.$$
So $\phi$ is not monic and hence $\begin{smallmatrix} 2 & &\\ 1 & &2 \\ & 1 &  & \end{smallmatrix}\notin \mathscr S(A, M, B)$.
Also, since the $\Lambda$-module $\begin{smallmatrix} 1  & & 2\\ & 1 & \end{smallmatrix}$ has a basis $e_1, \alpha, u$ with the $\Lambda$-actions given by the table above, it follows that $\begin{smallmatrix} 1  & & 2\\ & 1 & \end{smallmatrix} = \left[\begin{smallmatrix}
ke_1\oplus k\alpha \\ ku \end{smallmatrix}\right]_{\phi}$ with the $A$-map $\phi$ given as above.
So $\phi$ is not monic and hence $\begin{smallmatrix} 1  & & 2\\ & 1 & \end{smallmatrix}\notin \mathscr S(A, M, B)$.
In this way we see that $\mathscr S(A, M, B)$ has $3$ indecomposable objects$:$ \ $1, \ \begin{smallmatrix} 1 \\ 1 \end{smallmatrix}, \ \begin{smallmatrix}2 \\ 1 \\ 1 \end{smallmatrix}$.

\vskip5pt

Similarly  a $\Lambda$-module $N=\left[\begin{smallmatrix}
e_1N\\ e_2N
\end{smallmatrix}\right]_{\phi}\in\mathscr F(A, M, B)$ if and only if $\varphi: e_2N \longrightarrow \Hom_A(M, e_1N) = e_1N$  is epic, where $\varphi$
is the image of $\phi: M\otimes_k e_2N \longrightarrow e_1N$ under the adjunction isomorphism.
Note that $\varphi: e_2N \longrightarrow e_1N$ is exactly given by the actions of $\beta$. For example,
since the $\Lambda$-module $\begin{smallmatrix} 2\\ 1 \end{smallmatrix}$ has a basis $\alpha, u$ with the $\Lambda$-actions given by the table above,
it follows that $\begin{smallmatrix} 2\\ 1 \end{smallmatrix}= \left[\begin{smallmatrix}
k\alpha \\ ku \end{smallmatrix}\right]_{\phi}$ with $\varphi: ku \longrightarrow k\alpha$ given by $u\mapsto\beta u = \alpha$.
So $\begin{smallmatrix} 2\\ 1 \end{smallmatrix}\in\mathscr F(A, M, B)$. In this way we see that $\mathscr F(A, M, B)$ has $3$ indecomposable objects$:$ $\begin{smallmatrix} 2 \\ 1 \end{smallmatrix}, \ \begin{smallmatrix} & 2 &\\ 1 & &2 \\ & 1 &  & \end{smallmatrix}, 2$.

\vskip5pt

There is a unique {\rm RSS}
equivalence given by $1\mapsto \begin{smallmatrix} 2 \\ 1 \end{smallmatrix}, \ \ \ \begin{smallmatrix} 1 \\ 1 \end{smallmatrix}
\mapsto \begin{smallmatrix} & 2 &\\ 1 & &2 \\ & 1 &  & \end{smallmatrix}, \ \ \ \begin{smallmatrix}2 \\ 1 \\ 1 \end{smallmatrix}\mapsto 2.$
Note that $\mathscr S(A, M, B)= \mathcal {GP}(\Lambda)$ and $\mathscr F(A, M, B)= \mathcal {GI}(\Lambda)$, and this {\rm RSS}
equivalence is given by the {\rm Nakayama} functor $\mathcal N_\Lambda$. \end{exm}

\subsection{}The following examples show that if $_AM_B$ is {\bf not} exchangeable, then the existence of an {\rm RSS} equivalence can not be guaranteed.
They also show that $\mathscr S(A, M, B)$ is {\bf not} the separated monomorphism category of
the corresponding quiver in the sense of [ZX], in general.

\begin{exm}\label{nonexchangeable} $(1)$ \ Let $A$ be the path algebra $k(2\longrightarrow 1)$, $B: = k$, and $_AM_k: = \ _A(Ae_1)_k = 0 1.$ Then $\Lambda: = \left[\begin{smallmatrix}A & M \\ 0 & k \end{smallmatrix}\right]= \left[\begin{smallmatrix}k & k & k \\ 0& k & 0 \\ 0 & 0 & k \end{smallmatrix}\right]$ is just the path algebra $kQ$, where $Q$ is the quiver $2\longrightarrow 1\longleftarrow 3$. The {\rm Auslander-Reiten} quiver of $\Lambda\mbox{-}{\rm mod}$ is
$$\xymatrix@R=0.3cm@C=0.6cm{& {\begin{smallmatrix}1 1\\ 0 \end{smallmatrix}}\ar[dr] && {\begin{smallmatrix}00\\ 1 \end{smallmatrix}}\ar@{.}[ll]
\\
{\begin{smallmatrix}01\\ 0\end{smallmatrix}}\ar[dr]\ar[ur] && {\begin{smallmatrix}11\\ 1\end{smallmatrix}}\ar[ur]\ar[dr]\ar@{.}[ll]
\\
& {\begin{smallmatrix}0 1\\ 1\end{smallmatrix}}\ar[ur]& & {\begin{smallmatrix}10\\ 0\end{smallmatrix}}\ar@{.}[ll]}$$
where we denote a $\Lambda$-module $V_2\longrightarrow V_1\longleftarrow V_3$ by $\begin{smallmatrix}{\rm dim}_kV_2 \ {\rm dim}_kV_1\\ {\rm dim}_kV_3\end{smallmatrix}.$
Since $$\D(_AA_A)\otimes_A M = \D(_AA_A)\otimes_A Ae_1\cong \D(e_1A) \ncong Ae_1 = M\otimes_B\D(_BB_B),$$  $_AM_k$ is {\bf not} an exchangeable  bimodule. In fact,
$_AM_k$ also does {\bf not} satisfy the condition {\rm (IP)}. The monomorphism category $\mathscr S(A, M, k)$ induced by $_AM_k$ is
$$\mathscr S(A, M, k) = \{\left[\begin{smallmatrix}X\\ Y\end{smallmatrix}\right]_\phi\in \Lambda\mbox{-}{\rm mod} \ | \ X\in A\mbox{-}{\rm mod}, \ Y\in k\mbox{-}{\rm mod}, \
M\otimes_kY\stackrel \phi \hookrightarrow X \ \mbox{is a monic} \ A\mbox{-map}\}.$$
Thus $\mathscr S(A, M, k)$ has $5$ indecomposable objects
$\begin{smallmatrix}01\\ 0\end{smallmatrix}, \ \begin{smallmatrix}11\\ 0\end{smallmatrix}, \ \begin{smallmatrix}01\\ 1\end{smallmatrix}, \ \begin{smallmatrix}1 1\\ 1\end{smallmatrix}, \ \begin{smallmatrix}1 0\\ 0\end{smallmatrix}.$
While the epimorphism category $\mathscr F(A, M, k)$ induced by $_AM_k$ is
\begin{align*}\mathscr F(A, M, k)& = \{\left[\begin{smallmatrix}X\\ Y\end{smallmatrix}\right]_\phi\in \Lambda\mbox{-}{\rm mod} \ | \ X\in A\mbox{-}{\rm mod}, \ Y\in k\mbox{-}{\rm mod}, \
Y\stackrel \varphi\twoheadrightarrow \Hom_A(M, X) \ \mbox{is an epic} \ k\mbox{-map}\}\\ & = \{\left[\begin{smallmatrix}X\\ Y\end{smallmatrix}\right]_\phi\in \Lambda\mbox{-}{\rm mod} \ | \ X\in A\mbox{-}{\rm mod}, \ Y\in k\mbox{-}{\rm mod}, \
Y\stackrel \varphi\twoheadrightarrow e_1X \ \mbox{is an epic} \ k\mbox{-map}\}\end{align*}
where $\varphi: = \eta_{X, Y}(\phi)$ and  $\eta_{X, Y}: \Hom_A(M\otimes_kY, X)\cong \Hom_k(Y, \Hom_A(M, X))$ is the adjunction isomorphism.
So $\mathscr F(A, M, k)$ has only $4$ indecomposable objects
$\begin{smallmatrix}01\\ 1\end{smallmatrix}, \ \begin{smallmatrix}11\\ 1\end{smallmatrix},  \ \begin{smallmatrix}0 0\\ 1\end{smallmatrix}, \ \begin{smallmatrix}1 0\\ 0\end{smallmatrix}.$
Thus $\mathscr S(A, M, k)\ncong\mathscr F(A, M, k).$

\vskip5pt

Note that the indecomposable objects of the separated monomorphism category ${\rm smon}(Q, 0, k)$ are exactly
the indecomposable projective $\Lambda$-modules. See {\rm [ZX, Exam. 2.3]}. So $\mathscr S(A, M, k)\ncong\mathscr {\rm smon}(Q, 0, k).$

\vskip5pt

$(2)$ \ Let $A$ and $B: = k$ be as in $(1)$, and $_AM_k: = S(2) = 1 0.$ Since $_AM$ is not projective, $_AM_k$ is {\bf not} exchangeable  $($note that $\D(_AA_A)\otimes_A M = \D(_AA_A)\otimes_A S(2)= \D(_AA_A)\otimes_A e_2S(2)
\cong \D(e_2A) = S(2) \cong M\otimes_B\D(_BB_B);$  and
$_AM_k$ satisfies the condition {\rm (IP)}$)$. Then $\Lambda: = \left[\begin{smallmatrix}A & M \\ 0 & k \end{smallmatrix}\right]= \left[\begin{smallmatrix}k & k & 0 \\ 0& k & k \\ 0 & 0 & k \end{smallmatrix}\right] \cong kQ/I$ with $Q = 3\stackrel \beta\longrightarrow 2\stackrel \alpha\longrightarrow 1$ and $I = \langle \alpha\beta\rangle$. The {\rm Auslander-Reiten} quiver of $\Lambda\mbox{-}{\rm mod}$ is
$$\xymatrix@R=0.3cm@C=0.6cm{& {\begin{smallmatrix}1 1\\ 0 \end{smallmatrix}}\ar[dr]
\\{\begin{smallmatrix}01\\ 0\end{smallmatrix}}\ar[ur] & & {\begin{smallmatrix}10\\ 0\end{smallmatrix}}\ar[dr]\ar@{.}[ll] && {\begin{smallmatrix}00\\ 1 \end{smallmatrix}}\ar@{.}[ll]
\\
& & & {\begin{smallmatrix}10\\ 1\end{smallmatrix}}\ar[ur]}$$
where a $\Lambda$-module $V_3\longrightarrow V_2\longrightarrow V_1$ is denoted by $\begin{smallmatrix}{\rm dim}_kV_2 \ {\rm dim}_kV_1\\ {\rm dim}_kV_3\end{smallmatrix}.$
The monomorphism category $\mathscr S(A, M, k) = \{\left[\begin{smallmatrix}X\\ Y\end{smallmatrix}\right]_\phi\in \Lambda\mbox{-}{\rm mod} \ | \ M\otimes_kY\stackrel \phi \hookrightarrow X \ \mbox{is a monic} \ A\mbox{-map}\}$
has $4$-indecomposable objects
$\begin{smallmatrix}01\\ 0\end{smallmatrix}, \ \begin{smallmatrix}11\\ 0\end{smallmatrix}, \ \begin{smallmatrix}10\\ 0\end{smallmatrix}, \ \begin{smallmatrix}1 0\\ 1\end{smallmatrix}.$
The epimorphism category $\mathscr F(A, M, k) = \{\left[\begin{smallmatrix}X\\ Y\end{smallmatrix}\right]_\phi\in \Lambda\mbox{-}{\rm mod} \ | \ Y\stackrel \varphi\twoheadrightarrow \Hom_A(S(2), X) \ \mbox{is an epic} \ k\mbox{-map}\}$
has also $4$ indecomposable objects
$\begin{smallmatrix}01\\ 0\end{smallmatrix}, \ \begin{smallmatrix}11\\ 0\end{smallmatrix},  \ \begin{smallmatrix}1 0\\ 1\end{smallmatrix}, \ \begin{smallmatrix}0 0\\ 1\end{smallmatrix}.$
We claim that there are no {\rm RSS} equivalences $F:\mathscr S(A, M, k)\cong\mathscr F(A, M, k).$ Otherwise, by the definition of an {\rm RSS} equivalence we get a contradiction
$$0\ne \Hom_\Lambda(\begin{smallmatrix}11\\ 0\end{smallmatrix}, \begin{smallmatrix}1 0\\ 1\end{smallmatrix})
\cong \Hom_\Lambda(F(\begin{smallmatrix}11\\ 0\end{smallmatrix}), F(\begin{smallmatrix}1 0\\ 1\end{smallmatrix}))
= \Hom_\Lambda(\begin{smallmatrix}11\\ 0\end{smallmatrix}, \begin{smallmatrix}0 0\\ 1\end{smallmatrix}) = 0.$$

\vskip5pt

By {\rm [ZX, Exam. 2.3]}, the indecomposable objects of the separated monomorphism category ${\rm smon}(Q, I, k)$ are exactly
the indecomposable projective $\Lambda$-modules.  So $\mathscr S(A, M, k)\ncong\mathscr {\rm smon}(Q, 0, k).$
\end{exm}

We propose the following problems.

\vskip5pt

{\bf 1.} \ What is a sufficient and necessary condition for the existence of an RSS equivalence?

\vskip5pt

{\bf 2.} \ Whether or not an RSS equivalence is unique?

\vskip10pt

\centerline {\bf Appendix: computations of Step 2 in the proof of Theorem \ref{RSS1}}

\vskip5pt

Put $F = \D\Hom_\m(-, T): \mathscr S (A, M, B) \cong \mathscr F (A, M, B)$. Let $X\in A\mbox{-}{\rm mod}$ and $Y\in B\mbox{-}{\rm mod}$. To prove
$F(\left[\begin{smallmatrix}
   X \\
   0
\end{smallmatrix}\right]) \cong \left[\begin{smallmatrix} X \\ \Hom_A(M, X)\end{smallmatrix}\right]_\varphi$ and
$F(\left[\begin{smallmatrix}
   M\otimes_B Y  \\
   Y
\end{smallmatrix}\right]_{\rm Id}) \cong \left[\begin{smallmatrix} 0 \\ Y \end{smallmatrix}\right]$, where $\varphi: M\otimes_B \Hom_A(M, X) \longrightarrow X$ is the involution map, by the proof of Proposition \ref{smonsepi}, it suffices to prove $$\Hom_\m(\left[\begin{smallmatrix}
   X \\
   0
\end{smallmatrix}\right], T) \cong (\D X, \D \Hom_A(M, X))_{\D(\alpha^{-1}_{\D X})} \eqno(6.1)$$
where $\alpha_{_{\D X}}: \D(\D X\otimes M) \cong \Hom_A(M, X)$ is the adjunction isomorphism, $\D(\alpha^{-1}_{\D X}): \D X\otimes_A M \longrightarrow \D\Hom_A(M, X)$ is a right $B$-map, and $$\Hom_\m(\left[\begin{smallmatrix}
   M\otimes_B Y  \\
   Y
\end{smallmatrix}\right]_{\rm Id}, T) \cong (0, \D Y).\eqno(6.2)$$

\vskip5pt

Put $e_1: = \left(\begin{smallmatrix}1 & 0\\ 0 & 0 \end{smallmatrix}\right),
\ e_2: = \left(\begin{smallmatrix}0 & 0\\ 0 & 1 \end{smallmatrix}\right)$. Then as left $\Lambda$-modules we have  $Te_1 = \left[\begin{smallmatrix}\D(A_A) \\ 0 \end{smallmatrix}\right]$
and $Te_2 = \left[\begin{smallmatrix}M\otimes_B \D B \\ \D B\end{smallmatrix}\right]_{\rm Id}\cong \left[\begin{smallmatrix} \D(A)\otimes_A M\\ \D(B_B) \end{smallmatrix}\right]_{g^{-1}}.$
For any left $\Lambda$-module $L$, $\Hom_\Lambda(L, T)$ is a right $\Lambda$-module. Then $\Hom_\Lambda(L, T)e_1\cong \Hom_\Lambda(L, Te_1)$ is a right $A$-module
and $\Hom_\Lambda(L, T)e_2\cong \Hom_\Lambda(L, Te_2)$ is a right $B$-module, in the obvious way, and  we have a right $\Lambda$-isomorphism
$$\Hom_\Lambda(L, T)\cong (\Hom_\Lambda(L, Te_1), \ \Hom_\Lambda(L, Te_2))_\phi \eqno(6.3)$$
where $\phi$ is explicit given by:
$$\phi: \Hom_\Lambda(L, Te_1)\otimes_A M \longrightarrow \Hom_\Lambda(L, Te_2), \ \ f\otimes_A m \mapsto ``l\mapsto f(l)
\left(\begin{smallmatrix}0 & m\\ 0 & 0 \end{smallmatrix}\right)e_2". \eqno (6.4)$$

\vskip5pt

Applying $(6.3)$ to $L = \left[\begin{smallmatrix}
   M\otimes_B Y  \\
   Y
\end{smallmatrix}\right]_{\rm Id}$ we get right $\Lambda$-isomorphisms:
\begin{align*}\Hom_\m(\left[\begin{smallmatrix}
    M\otimes_BY \\
   Y
\end{smallmatrix}\right]_{\rm Id}, T) &= (\Hom_\m(\left[\begin{smallmatrix}
    M\otimes_BY \\
   Y
\end{smallmatrix}\right]_{\rm Id}, Te_1),  \Hom_\m(\left[\begin{smallmatrix}
    M\otimes_BY \\
   Y
\end{smallmatrix}\right]_{\rm Id}, Te_2))\\& = (\Hom_\m(\left[\begin{smallmatrix}
    M\otimes_BY \\
   Y
\end{smallmatrix}\right]_{\rm Id}, \left[\begin{smallmatrix}\D(A_A) \\ 0\end{smallmatrix}\right]), \ \Hom_\m(\left[\begin{smallmatrix}
    M\otimes_BY \\
   Y
\end{smallmatrix}\right]_{\rm Id}, \left[\begin{smallmatrix} M\otimes_B \D(B_B)\\ \D(B_B) \end{smallmatrix}\right]_{\rm Id})\\ &
\cong (0, \ \Hom_B(Y, \D(B_B))\cong (0, \ \D Y).\end{align*}
This proves $(6.2)$.

\vskip5pt

Applying $(6.3)$ to $L = \left[\begin{smallmatrix}
   X   \\ 0 \end{smallmatrix}\right]$ we get right $\Lambda$-isomorphisms:
\begin{align*} \Hom_\m(\left[\begin{smallmatrix} X \\ 0 \end{smallmatrix}\right], T)
& = (\Hom_\m(\left[\begin{smallmatrix} X \\ 0 \end{smallmatrix}\right], Te_1), \Hom_\m(\left[\begin{smallmatrix} X \\ 0 \end{smallmatrix}\right], Te_2))
\\ & = (\Hom_\m(\left[\begin{smallmatrix} X \\ 0 \end{smallmatrix}\right], \left[\begin{smallmatrix}\D(A_A) \\ 0\end{smallmatrix}\right]), \Hom_\m(\left[\begin{smallmatrix} X \\ 0 \end{smallmatrix}\right], \left[\begin{smallmatrix} M\otimes_B \D(B_B)\\ \D(B_B) \end{smallmatrix}\right]_{\rm Id}))
\\& \cong (\D X, \ \Hom_\m(\left[\begin{smallmatrix}
   X \\
   0
\end{smallmatrix}\right], \left[\begin{smallmatrix} \D(A)\otimes M\\ \D(B_B) \end{smallmatrix}\right]_{g^{-1}}))\\ &
\cong  (\D X, \Hom_A(X, \D(A)\otimes_AM))_\phi\end{align*}
and by $(6.4)$, $\phi$ is explicitly given by
$$\phi: \D X\otimes_AM \longrightarrow \Hom_A(X, \D(A)\otimes_AM)), \ \ \alpha\otimes_A m\mapsto ``x\mapsto \alpha_x\otimes_A m" \eqno(6.5)$$
where $\alpha_x\in \D A$ sends $a$ to $\alpha(ax).$

\vskip5pt

To prove $(6.1)$, it is clear that we need a right $B$-isomorphism $\psi: \D\Hom_A(M, X)\cong \Hom_A(X, \D(A)\otimes_AM)$. For this, we need to use the assumption that $_AM$ is projective.
Without loss of the generality, one can take $_AM=Ae$ for some idempotent element $e\in A$.  First, we have group isomorphisms
\begin{align*}\D\Hom_A(M, X)  = \D\Hom_A(Ae, X) \cong \D(X)e \cong \Hom_A(X, \D(eA))\cong\Hom_A(X, \D(A)\otimes_AM).\end{align*}
This isomorphism $\psi: \D\Hom_A(M, X)\cong \Hom_A(X, \D(A)\otimes_AM)$ of abelian groups is explicitly given by
$$\gamma\mapsto ``x\mapsto \gamma_x\otimes_A e" \eqno (6.6)$$ where $\gamma_x\in \D A$ sends $a\in A$ to $\gamma(f_{a,x})$, and $f_{a,x}\in \Hom_A(M, X)$ sends $m = ce\in M = Ae$ to $max$.
We claim that $\psi$ is a right $B$-map, and hence $\psi$ is a right $B$-module isomorphism.

\vskip5pt
In fact, for each $b\in B$, suppose $eb = v_be\in M = Ae$ for some $v_b\in A$.  Then for each $x\in X$ we have
$$\psi(\gamma b)(x) = (\gamma b)_x\otimes e, \ \mbox{with} \ (\gamma b)_x\in \D A$$
and  $$(\psi(\gamma) b)(x) = \psi(\gamma)(x) b = \gamma_x\otimes_A eb = \gamma_x\otimes_A v_be = \gamma_x v_be\otimes_A e, \ \mbox{with} \ \gamma_x v_b\in \D A.$$
Thus, it suffices to show  $(\gamma b)_x(a) = (\gamma_x v_be)(a) = \gamma_x (v_bea)$ for each $a\in A$, i.e.,
$(\gamma b)(f_{a, x}) = \gamma (f_{v_bea, x})$. That is $\gamma (bf_{a, x}) = \gamma (f_{v_bea, x})$. This is really true, since both $bf_{a, x}$ and $f_{v_bea, x}$ sends $m$ to
$$(bf_{a, x})(m) = f_{a, x}(mb) = mbax=ce(eb)ax = ce(v_be)ax = mv_beax = f_{v_bea, x}(m).$$ This proves that $\psi: \D\Hom_A(M, X)\cong \Hom_A(X, \D(A)\otimes_AM)$ is a right $B$-module isomorphism.

\vskip5pt

So, we get the right $\Lambda$-isomorphism  $$({\rm Id}_{\D X}, \psi^{-1}): (\D X, \Hom_A(X, \D(A)\otimes_AM))_{\psi\D(\alpha^{-1}_{\D X})}\cong (\D X, \ \D\Hom_A(M, X))_{\D(\alpha^{-1}_{\D X})},$$
where $\alpha_{_{\D X}}: \D(\D X\otimes M) \cong \Hom_A(M, X)$ sends $\beta\in \D(\D X\otimes M)$ to $f\in \Hom_A(M, X)$
such that $$\beta(\alpha\otimes_A m) = \alpha f(m), \ \forall \ \alpha\in \D X, \ m\in M,$$
$\alpha^{-1}_{\D X}: \Hom_A(M, X) \longrightarrow \D(\D X\otimes_A M)$ is given by $f\mapsto ``\beta: \alpha\otimes_A m \mapsto \alpha f(m)",$ and $\D(\alpha^{-1}_{\D X}): \D X\otimes_A M \longrightarrow \D\Hom_A(M, X)$ is given by
$$\alpha\otimes_A m \mapsto ``\delta: f\mapsto \alpha f(m)",$$ $\psi: \D\Hom_A(M, X)\cong \Hom_A(X, \D(A)\otimes_AM)$ is  given by $(6.6)$,
and $\psi\D(\alpha^{-1}_{\D X}): \D X\otimes_A M \longrightarrow \Hom_A(X, \D(A)\otimes_AM).$ To prove $(6.1)$, it suffices to prove
$\phi = \psi\D(\alpha^{-1}_{\D X})$.
Thus by $(6.6)$ we have
$$(\psi\D(\alpha^{-1}_{\D X})(\alpha\otimes_A m))(x) = \psi(\delta)(x) = \delta_x\otimes_A e$$
where $\delta_x\in \D A$ sends $a\in A$ to $\delta(f_{a,x})= \alpha f_{a,x}(m) = \alpha(max)$. Comparing with $(6.5)$ we see $\delta_x = \alpha_x ce = \alpha_x m,$ since
$\alpha_x m(a) = \alpha_x (ma) = \alpha (max) = \delta_x(a).$
It follows that $$(\psi\D(\alpha^{-1}_{\D X})(\alpha\otimes_A m))(x) = \delta_x\otimes_A e = \alpha_x ce\otimes_A e=\alpha_x \otimes_A m = \phi(\alpha\otimes_A m)(x).$$ This proves $\psi\D(\alpha^{-1}_{\D X}) = \phi$, and hence completes the proof.

\bigskip

{\footnotesize

Bao-Lin Xiong \ \ \ \ {\tt Email: xiongbaolin@gmail.com}

Department of Mathematics, \ Beijing  University of Chemical  Technology,
Beijing 100029, P. R. China


\medskip

Pu Zhang \ \ \ \ {\tt Email: pzhang@sjtu.edu.cn}

School of Mathematics, \ Shanghai Jiao Tong University, Shanghai 200240, P. R. China

\medskip

Yue-Hui Zhang \ \ \ \ {\tt Email: zyh@sjtu.edu.cn}

School of Mathematics, \ Shanghai Jiao Tong University, Shanghai 200240, P. R. China}

\end{document}